\renewcommand{\Re}{{\operatorname{Re}\,}}
\renewcommand{\epsilon}{\varepsilon}
\newcommand{\kahler}{K\"ahler }
\newcommand{\KE}{K\"ahler-Einstein }
\newcommand{\R}{{\mathbb R}}
\newcommand{\C}{{\mathbb C}}
\newcommand{\Q}{{\mathbb Q}}
\newcommand{\Z}{{\mathbb Z}}
\newcommand{\gij}{{g_{i\bar{j}}}}
\newcommand{\tij}{{\vartheta_{i\bar{j}}}}
\newcommand{\tii}{{\vartheta_{i\bar{i}}(z',0)}}
\newcommand{\pij}{{\varphi_{i\bar{j}}}}
\renewcommand{\d}{\partial}
\newcommand{\dbar}{\bar\partial}
\newcommand{\ddbar}{\partial\dbar}
\newcommand{\Ric}{{\operatorname{Ric}}}
\renewcommand{\phi}{\varphi}
\newcommand{\gcal}{\mathcal{G}}
\newcommand{\hcal}{\mathcal{H}}
\newtheorem{theorem}{{Theorem}}[section]
\newtheorem{theo}{{Theorem}}[section]
\newtheorem{cor}[theorem]{{Corollary}}
\newtheorem{lem}[theorem]{{Lemma}}
\newtheorem{prop}[theorem]{{Proposition}}
\newenvironment{rem}{\medskip\noindent{\it Remark:\/} }{\medskip}
\theoremstyle{definition}
\numberwithin{equation}{section}
\def \C {\mathbb C}
\def \Z {\mathbb Z}
\def \R {\mathbb R}
\def \Q {\mathbb Q}
\def \xmd {X\backslash D}
\def \Exp {\text{Exp}}
\title[Bergman Kernels of the Cheng-Yau metrics]{Bergman Kernels of the Cheng-Yau metrics on quasi-projective manifolds}
\author{Jingzhou Sun}
\thanks{}
\address{Department of Mathematics, Shantou University, Shantou City, Guangdong Province 515063, China}
\begin{document}
	
	\begin{abstract}
We show the asymptotics of the Bergman kernel function near the smooth divisor at infinity of the Cheng-Yau metric on quasi-projective manifolds. In particular, we show that there is a quantum phenomenon for the points very close to the divisor at infinity.
	\end{abstract}

	\maketitle
	
	\tableofcontents

	\section{Introduction}

	Let $(M,\omega)$ be a \kahler manifold, $L\to M$ a line bundle with hermitian metric $h$. Then the Bergman space $\hcal_k$ consists of holomorphic sections of $L^k$ that are $L_2$-integrable, namely
	$$\int_M |s|_h^2\frac{\omega^n}{n!}<\infty.$$
	$\hcal_k$ is naturally a Hilbert space with the inner product defined as
	$$<s_1,s_2>=\int_M (s_1(z),s_2(z))_h\frac{\omega^n}{n!}.$$
	Then the density of states function or Bergman kernel function $\rho_k$, usually called Bergman kernel for short, is defined as
	$$\rho_k(z)=\sup_{s\in \hcal_k,\parallel s\parallel=1}|s(z)|^2.$$
	When $L$ is an ample line bundle over a projective manifold, the asymptotic of the Bergman kernel as $k\to\infty$ has been proved by Tian, Zelditch, Catlin, Lu\cite{Tian1990On, Zelditch2000Szego, Lu2000On, Catlin, MM}, and has been proved to be a very important tool in complex geometry, for example \cite{donaldson2001}\cite{Sun2011Expected}  \cite{Donaldson2014Gromov},	\cite{Donaldson15}. 
	
	When $M$ is not compact or when the metric $h$ on $L$ is not smooth, it becomes very hard, if not impossible, to describe the asymptotics of the Bergman kernel as nicely as in the compact smooth case. In \cite{AMM} and \cite{SS}, Auvray-Ma-Marinescu and Sun-Sun studied the Bergman kernels on punctured Riemann surfaces and showed that near each singularity the Bergman kernel is very close to the Bergman kernel of the standard punctured disk. In particular, it was first noticed in \cite{SS} that the quotient of the Bergman kernel near punctures with the Bergman kernel of the standard punctured disk is $k^{-\infty}$ close to 1. This was later generalized by Auvray-Ma-Marinescu in \cite{amm2} to larger neighborhoods of the punctures. In \cite{Zhou2024}, Zhou further generalized their result to the case of asymptotic complex hyperbolic cusps. There are also many other results in the literature studying the asymptotics of Bergman kernels of singular K\"ahler metrics, see for example  \cite{LL, RT, DLM}.

	Let $X$ be a smooth projective manifold of dimension $n$. Given a simple normal crossing divisor $D\subset X$ such that $K_X+[D]$ is ample, then it has been proved by Cheng-Yau Kobayashi, Tian-Yau and Bando(\cite{ChengYau2, Kobayashi,TianYau3,Bando}) that the quasi-projective manifold $\xmd$ admits a unique complete \KE metric $\omega_{KE}$, called Cheng-Yau metric, with finite volume and $\Ric(\omega_{KE})=-\omega_{KE}$. So $\omega_{KE}$ defines a hermitian metric on $K_X$ restricted to $\xmd$.
	
	In this article, we consider the Bergman space $\hcal_k$ consisting of holomorphic sections of $K_X^k$ on $\xmd$ that are $L_2$ integrable in the following sense:
	$$\int_{\xmd}|s|^2_{CY}\frac{\omega_{KE}^n}{n!}<\infty,$$
	where $|s|_{CY}$ is the metric defined by $\omega_{KE}$ on $K_X^k$. Assuming $D$ is smooth, we study the asymptotics of the Bergman kernels $\rho_k$ of $\hcal_k$ for the points very close to the divisor $D$.

	\

	Since $K_D=K_X+[D]$ is ample, by Aubin and Yau's famous theorem, there is a unique \KE metric $\omega_D$ satisfying $\Ric(\omega_{D})=-\omega_{D}$. In \cite{Schum98}, Schumacher proved that on the directions "parallel" to $D$, $\omega_{KE}$ converges to $\omega_D$. In the directions "orthogonal" to $D$, $\omega_{KE}$ looks like the standard Poincar\'{e} metric on the punctured disk.

Let $\{D_\alpha\}_{\alpha\in \Lambda}$ be the connected components of $D$.
We denote by $$B_{\alpha}=\frac{4(n-1)}{3}\frac{[D_\alpha]\cdot K_{D_\alpha}^{n-2}}{K_{D_\alpha}^{n-1}}.$$
We will adapt the notation $\epsilon(k)$ to denote a quantity that is asymptotically smaller than $\frac{1}{k^N}$ for all $N>0$. Since we will focus on points near $D$, for simplicity of notation, we will use $B$ in place of $B_{\alpha}$ around each ${D_\alpha}$. 
	Let $\tau(p_0)=-\log d^2(p_0,D)$, where $d(p_0,D)$ is the distance from $p_0$ to $D$. We first describe $\rho_{k+1}$ for points such that $\tau^{-1}=O(\frac{1}{\sqrt{k}\log k})$, which we will call "the inside". And our main result is the following theorem.
\begin{theo}\label{thm-main-inside}	
	Let $a$ be a positive integer and let $\Sigma_a$ and $\Sigma'_a$ be the hypersurfaces defined by $\tau=\frac{2k}{a}-\frac{B}{2}\log \frac{2k}{a}$ and $\tau=\frac{2k}{a}-\frac{B}{2}\log \frac{2k}{a}-\frac{k}{a(a+1)}$ respectively. 
	For $k$ large enough and for $a=O(\frac{\sqrt{k}}{\log k})$, we have
	$$\rho_{k+1}(p_0)=(1+O(\frac{1}{(\log k)^2}))\frac{2k^{3/2}}{\sqrt{\pi}a},$$
	for $p_0\in \Sigma_a$ and 
		$$\rho_{k+1}(p_0)=\epsilon(k),$$
	for $p_0\in \Sigma'_a$. We also have $$\rho_{k+1}(p_0)=\epsilon(k),$$
	when $\tau(p_0)>2k+\sqrt{k}\log k$.
\end{theo}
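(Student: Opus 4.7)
The plan is to exploit the product-like structure of the Cheng-Yau metric near $D$. In local holomorphic coordinates $(z_1,\ldots,z_{n-1},w)$ adapted to $D_\alpha=\{w=0\}$, Schumacher's result identifies $\omega_{KE}$, to leading order, with the product of the Kähler-Einstein metric $\omega_D$ on the slices $\{w=\mathrm{const}\}$ and the Poincar\'e metric $\frac{i\,dw\wedge d\bar{w}}{|w|^2\tau^2}$ in the normal direction, where $\tau=-\log|w|^2$. An $L^2$-holomorphic section $s$ of $K_X^{k+1}$ on $X\setminus D$ then admits, in a tubular neighborhood, a Laurent-type mode expansion
$$s=\sum_{j\geq-k}a_j(z)\,w^j\,(dz\wedge dw)^{k+1},$$
and distinct modes are orthogonal with respect to the product model inner product. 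Using $\det g_{KE}\sim\det g_D\cdot(|w|^2\tau^2)^{-1}$, a direct computation reduces the contribution of mode $m=j+k+1$ to the ratio $|s_j(p_0)|^2/\|s_j\|^2$ to the product of the transverse factor $\frac{m^{2k+1}}{\pi(2k)!}\,e^{-m\tau_0}\tau_0^{2(k+1)}$ with a $z$-factor whose supremum over $a_j$ is controlled by the Bergman kernel on $D$ at $z_0$.

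Laplace's method in $\tau$ shows that the transverse factor of mode $m$ peaks sharply at $\tau_0=2(k+1)/m$, which to leading order in $k$ is exactly the shell $\Sigma_a$ with $a=m$. Stirling's formula evaluates the peak value as $\frac{2k^{3/2}}{\sqrt{\pi}\,m}$, which yields the stated leading-order asymptotic. The Gaussian half-width around the peak is $\sqrt{2(k+1)}/m$, which for $a=O(\sqrt{k}/\log k)$ is of order $\log k$, while the inter-shell spacing $2(k+1)/(a(a+1))$ is of order $\log^2 k$. Consequently, at $p_0\in\Sigma_a$ only mode $a$ contributes non-$\epsilon(k)$; on $\Sigma'_a$, offset from $\Sigma_a$ by $k/(a(a+1))\sim\log^2 k$, every mode sits $\Theta(\log k)$ Gaussian widths from its peak and hence contributes $\exp(-c\log^2 k)=\epsilon(k)$. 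For $\tau>2k+\sqrt{k}\log k$ even the lowest mode $m=1$, whose peak lies at $\tau\approx 2k$, is $\sqrt{k}\log k$ past its peak, giving the same $\epsilon(k)$ bound.

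For the lower bound at $p_0\in\Sigma_a$ I would construct an explicit peak section by taking a Tian-Zelditch peak $a_j^{\sharp}$ on $D$ at the base point, multiplied by $w^{a-k-1}$ and a smooth cutoff $\chi$ in $w$, then correcting the cutoff error via a weighted H\"ormander $\bar\partial$-estimate adapted to the Cheng-Yau Poincar\'e weight, in the spirit of Auvray-Ma-Marinescu and Sun-Sun; the weight forces the correction to be $\epsilon(k)$. The upper bound uses mode orthogonality in the product model together with the same $\bar\partial$-estimates to dominate the contribution of an arbitrary unit-norm section. The main obstacle I expect is the derivation of the subleading shift $-\tfrac{B}{2}\log\tfrac{2k}{a}$ with the prescribed intersection-theoretic constant $B=\frac{4(n-1)}{3}\frac{[D_\alpha]\cdot K_{D_\alpha}^{n-2}}{K_{D_\alpha}^{n-1}}$: this requires improving Schumacher's expansion of $\omega_{KE}$ by one order, where the correction involves the second fundamental form and normal bundle curvature of $D$ and effectively inserts a weight $\tau^{B}$ into the transverse Laplace integral, shifting the saddle point by precisely the stated logarithmic amount. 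A related technical point is ensuring that all $\bar\partial$-error estimates remain uniform in the range $a=O(\sqrt{k}/\log k)$, which is handled by suitably scaled weighted $L^2$-estimates in shrinking neighborhoods of $D$.
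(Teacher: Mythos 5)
Your high-level road-map matches the paper's: reduce to a fiber-wise mode expansion in the normal direction, evaluate the transverse Bergman kernel by Laplace's method and Stirling to get the peak value $\tfrac{2k^{3/2}}{\sqrt{\pi}a}$ on the $a$-th shell, compare the Gaussian width $\sim\sqrt{2k}/a$ with the inter-shell gap $\sim 2k/a^2$ to isolate the dominant mode, and use a weighted H\"ormander $\bar\partial$-estimate to construct quasi-peak sections for the lower bound. That much is right. But two places where your sketch does not close.

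First, the upper bound. You say you would ``use mode orthogonality in the product model together with the same $\bar\partial$-estimates to dominate the contribution of an arbitrary unit-norm section.'' The $\bar\partial$-estimate has nothing to offer here, and mode orthogonality is only exact on the fiber model, not for the actual global $L^2$ inner product on $X\setminus D$; the Cheng-Yau metric is equivalent but not equal to a product, and its asymptotic expansion has $z'$-dependent coefficients. The missing ingredient is the sub-mean-value step in the base directions: writing $s=\sum_i F_i(z')z_n^i(\cdots)^{k+1}$, one needs a lower bound of the form $\int_{|z'|<\log k/\sqrt{k}}|F_i(z')|^2 H^{-k}|dz'|^2\gtrsim \frac{\pi^{n-1}}{k^{n-1}}|F_i(0)|^2$, uniform in $i$ up to controlled errors, so that the global unit norm forces the fiber coefficients $|F_i(0)|^2$ to satisfy the bound needed to compare $|s(p_0)|^2$ to $\mathfrak{B}_{k+1}$. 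One must also separately truncate and dominate the high modes $i\geq 2\sqrt{k}\log k$ whose $J_i$-normalization can otherwise behave badly. Without this, ``dominate an arbitrary unit-norm section'' is not justified, and the upper bound --- which is what actually delivers the $\epsilon(k)$ statements on $\Sigma'_a$ and for $\tau>2k+\sqrt{k}\log k$ --- does not go through.

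Second, you flag as the main obstacle the derivation of the $-\tfrac{B}{2}\log\tfrac{2k}{a}$ shift, saying it ``requires improving Schumacher's expansion of $\omega_{KE}$ by one order'' and relating $B$ to the second fundamental form and normal bundle curvature. This is a misdiagnosis: the needed refinement is not something to be proved here but is an existing result. Rochon--Zhang (and Jiang--Shi in the smooth-divisor case) established the polyhomogeneous expansion $u\sim\sum c_{i,j}x^i(\log x)^j$ with the identification $c_{1,1}=\tfrac{4(n-1)}{3}\tfrac{[D_\alpha]\cdot K_{D_\alpha}^{n-2}}{K_{D_\alpha}^{n-1}}$, exactly the constant $B$ in the theorem. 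Once one inputs $h_0(t)=t^2\bigl(1+B\tfrac{\log t}{t}+\tfrac{A}{t}+\cdots\bigr)$ into the Laplace computation, the saddle point $t_a$ picks up the $-\tfrac{B}{2}\log\tfrac{2k}{a}$ shift automatically; no new curvature computation is needed. So your ``obstacle'' dissolves on citing the right expansion, but your proof would indeed be incomplete if you tried to obtain it from Schumacher's leading-order equivalence alone.
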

Notice that we have used $\rho_{k+1}$ instead of $\rho_{k}$ to be consistent with the other sections, where it is more convenient to use $k+1$.

\begin{rem}
	\begin{itemize}
		\item 	It is interesting to see that $\rho_{k+1}$ exhibit a quantum phenomenon as seen in the case of punctured Riemann surfaces \cite{Punctured}. We will refer to the region around the hypersurface $\Sigma_a$ as the $a$-th shell. 
		\item 	Actually, we have more precise estimates for $\rho_{k+1}$ if we combine lemma \ref{lem-concentrated} and propositions \ref{prop-upper-bound} and \ref{prop-lower-bound} than stated in this theorem, but we are not able to write down a summarization suitable for the introduction.
	\end{itemize}

\end{rem}

For points further away from $D$, we have the following result.

\begin{theo}\label{thm-main-neck}	
	For $k$ large enough, we have
	$$\rho_{k+1}(p_0)\leq \frac{k^{n-1}}{\pi^{n-1}}(4k+3\sqrt{k}\tau(p_0)),$$
	if $c_1\frac{\sqrt{k}}{\log k}<\tau(p_0)<c_2\sqrt{k}\log k$ for some fixed positive constants $c_1,c_2$.
\end{theo}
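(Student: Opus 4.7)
The plan is to obtain this upper bound from the extremal characterization $\rho_{k+1}(p_0)=\sup_{\|s\|=1}|s(p_0)|^2_{CY}$ via a sub-mean-value argument against the plurisubharmonic function $|f|^2$, where $f$ is the coefficient function of a unit-norm section $s\in\hcal_{k+1}$ in a local holomorphic frame of $K_X^{k+1}$ near $D$.

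First I would introduce holomorphic coordinates $(z',w)=(z_1,\ldots,z_{n-1},w)$ near the footpoint of $p_0$ on $D$, with $D=\{w=0\}$ and $p_0=(0,w_0)$ so that $|w_0|\simeq e^{-\tau(p_0)/2}$. Writing $s=f\cdot\Omega^{k+1}$ for the frame $\Omega=dw\wedge dz^1\wedge\cdots\wedge dz^{n-1}$ of $K_X$ gives $|s|^2_{CY}=|f|^2/(\det g_{KE})^{k+1}$. Schumacher's asymptotic expansion of $\omega_{KE}$ near $D$, cited in the introduction, supplies $\det g_{KE}\sim\det g_D(z')/(|w|^2(\log|w|^2)^2)$ together with controlled lower-order corrections, and the $L^2$ normalization of $s$ translates into a weighted integral of $|f|^2$ against Lebesgue measure.

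The main step is a mean-value inequality for $|f|^2$ on an anisotropic polydisk $P(p_0)$ centered at $p_0$: radius of order $1/\sqrt{k}$ in each $z'$-direction (the usual Bergman scale along the compact factor $D$) and radius of order $|w_0|/\sqrt{k}$ in the $w$-direction (the Poincar\'e scale transverse to $D$). These scales are chosen so that the weight $(\det g_{KE})^{-(k+1)}$ varies by at most a bounded multiplicative factor on $P$, which one verifies by Taylor-expanding the K\"ahler potential via $\Ric(\omega_{KE})=-\omega_{KE}$; the $\tau(p_0)$-dependence enters exactly through the $\log|w|^2$ term in this expansion. Bounding $|f(p_0)|^2$ by its average over $P$ and converting back to $|s(p_0)|^2_{CY}$ via the $L^2$ constraint produces a clean factorization: an $(n-1)$-dimensional bulk contribution $k^{n-1}/\pi^{n-1}$ from the directions along $D$, times a one-dimensional transverse factor $4k+3\sqrt{k}\tau(p_0)$ coming from the Poincar\'e model in $w$, along the lines of the punctured-disk analysis of \cite{AMM, SS, amm2}.

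The main obstacle is controlling Schumacher's remainder when raised to the $(k+1)$-st power, since even $\ok$ errors can blow up under exponentiation unless tracked carefully; this forces one to balance the polydisk size so that the weight error stays bounded while remaining large enough for the $L^2$-normalization to bite. The admissible window $c_1\sqrt{k}/\log k<\tau(p_0)<c_2\sqrt{k}\log k$ is precisely where this trade-off is favorable: below $c_1\sqrt{k}/\log k$ one is in the bulk where standard compact Bergman asymptotics would give a stronger (and different) bound, while above $c_2\sqrt{k}\log k$ one enters the inside regime of Theorem \ref{thm-main-inside}, where the kernel is controlled by the shell structure and a monotone upper bound like this is no longer the right description.
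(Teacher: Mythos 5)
The paper's proof of this theorem has a completely different structure from what you propose, and the mean-value approach you describe cannot reach the stated bound.

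The paper's argument (Propositions \ref{prop-general} and \ref{prop-rk-between} together with Lemma \ref{lem-bk-between}) splits the two directions differently than you do. In the $z'$-directions tangent to $D$ it does use a Gaussian mean-value inequality, as you suggest — this is the source of the $k^{n-1}/\pi^{n-1}$ factor. But in the normal direction $z_n$ it writes $F=\sum_i F_i(z')z_n^i$, uses the orthogonality of the modes $z_n^i$, and estimates the fiber-wise Bergman kernel $\mathfrak{B}_{k+1}(t)=h_0^{k+1}(t)\sum_i|z_n|^{2i}/(2\pi J_i)$ directly. The factor $4k+2\sqrt{k}t$ in Proposition \ref{prop-rk-between} comes from Lemma \ref{lem-bk-between}, which bounds the sum $\sum_i\Lambda_i(t)$ by controlling the concavity of $\iota(i)=-it-\log J_i$: the variance computation shows $-2k/a^2(1+o(1))\le\iota''(a)\le -k/a^2(1+o(1))$, and comparison with a theta-like sum $\sum_{j\in\Z}e^{-\lambda j^2/2}$ yields $\mathfrak{B}_{k+1}(t_a)<4k+2k\sqrt{k}/a$. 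This is genuinely the Bergman kernel of the punctured disk in its $\theta$-function guise, not a mean-value estimate.

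Your proposed sub-mean-value argument on a polydisk does not give a bound of this strength. The obstruction is that the relevant weight in the normal direction is, in the log-coordinate $u=\log z_n$ with $t=-2\Re u$, approximately $t^{2k}$. This weight has a large radial logarithmic derivative $\partial_t\log t^{2k}=2k/t$, of order $\sqrt{k}$ to $\sqrt{k}\log k$ in the window of the theorem. Any polydisk over which the weight varies by only an $O(1)$ factor must have Euclidean radius $\delta\lesssim t_0/k$ in $\Re u$ — equivalently, radius $\sim |w_0|\tau(p_0)/k$ in the $w$-plane, not $|w_0|/\sqrt{k}$ as you write. (At your scale $|w_0|/\sqrt{k}$, the weight oscillates by $e^{c\sqrt{k}/\tau}$, which is $e^{c\log k}=k^c$ at the lower end $\tau\sim\sqrt{k}/\log k$ — already bad.) Even after correcting the scale and optimizing the eccentricity of the comparison region (stretching in the angular direction, whose bound is only the period $2\pi$), the best the sub-mean-value can produce for the fiber-wise kernel is $O(kt_0)$, whereas Lemma \ref{lem-bk-between} gives $O(k+\sqrt{k}t_0)$. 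So the mean-value route overshoots by roughly $\sqrt{k}$; in particular it cannot produce the stated coefficient $3\sqrt{k}$ in front of $\tau(p_0)$.

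The root cause is that the weight $t^{2k}$ is not a slowly-varying Gaussian: the Bergman kernel it produces has the shell structure that the rest of the paper is devoted to, peaking at the discrete $t_a\approx 2k/a$ and collapsing to $\epsilon(k)$ in between. A mean-value bound is insensitive to this quantization and must be dominated by the worst case over a small neighborhood; the paper's mode decomposition and the concavity bound on $\iota$ are precisely what make the estimate as sharp as the peaks themselves. If you want to prove Theorem \ref{thm-main-neck}, you need to go through Lemma \ref{lem-ja} (Laplace approximation of $J_a$), the concavity of $\iota$, and Lemma \ref{lem-bk-between}, not a polydisk mean-value.
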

It would be very interesting to have a good lower bound for $\rho_{k+1}$ for points within this region. However, we are not able to do this at this moment. For points even further away from $D$, $\rho_{k+1}(p_0)$ should be close to the compact case, which we will not talk about in this article.

\

Let $L=K_X+D$. Let $s_D$ be a section of the line bundle $D$ whose zero divisor is $D$.
We consider $H^0(X,L^k-aD)$ as a subspace of $H^0(X,L^k)$, consisting of sections that vanish on $D$ of vanishing order $\geq a$. For each section $s\in \hcal_k$, we have a section $s\otimes s_D^{\otimes k}\in H^0(X\backslash D, kL)$. By the integrability condition, one can easily see that $s\otimes s_D^{\otimes k}$ extends to a holomorphic section on $X$ that vanishes on $D$. Conversely, for each section $f\in H^0(X,L^k-D)$, we have a holomorphic section $\frac{f}{s_D^{\otimes k}}$ of $K_X^k$ on $X\backslash D$, and it is easy to see that $\frac{f}{s_D^{\otimes k}}$ is integrable under the Cheng-Yau metric. Therefore $\hcal_k$ can be identified with $H^0(X,L^k-D)$ as a subspace of $H^0(X,L^k)$.

For $a>0$, we denote by the $j_a$ the embedding $H^0(X,L^k-aD)\to \hcal_k$ defined by $s\mapsto \frac{s\otimes s_D^{\otimes a}}{s_D^{\otimes k}}$. 
And we denote by $\hcal_{k,a}$ the image of $j_a$. We can get by induction an orthogonal decompostion
$$\hcal_k=H^0(X,L^k-bD)\oplus\oplus_{a=1}^{b-1}\gcal_{k,a}$$
where by $\gcal_{k,a}$ we mean a subspace of $\hcal_k$ that is orthogonal to $\hcal_{k,a+1}$. Clearly $\gcal_{k,a}$ is isomorphic to $H^0(D,L^k-aD)$. We denote by $\rho_{k,a}$ the Bergman kernel for $\hcal_{k,a}$ and by $\varrho_{k,a}$ the Bergman kernel for $\gcal_{k,a}$. So we have 
$$\rho_k=\sum_{a=1}^{b-1}\varrho_{k,a}+\rho_{k,b}.$$ 
We have the following theorem for $\rho_{k,a}$:
\begin{theo}\label{thm-main-hka}
    If $a=O(\frac{\sqrt{k}}{\log k})$, then we have 
    $$\rho_{k+1,a+1}=\epsilon(k)\rho_{k},$$
    for the points where $\tau>\frac{2k}{a}-\frac{B}{2}\log \frac{2k}{a}-\frac{k}{2a^2}$.
\end{theo}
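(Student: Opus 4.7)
My plan is to pull the problem back, near $D$, to the punctured-disk product model, use rotational symmetry in the transverse direction to reduce to a one-dimensional estimate for each Laurent mode, and then show that every mode surviving the vanishing constraint of $\hcal_{k+1,a+1}$ has a Rayleigh quotient that is $\epsilon(k)$-smaller than the optimal mode producing $\rho_k$ on the $a$-th shell. More concretely, using Schumacher's description of $\omega_{KE}$ together with the localization in Lemma \ref{lem-concentrated}, one may replace, up to controlled error, the Cheng-Yau Bergman problem on a tubular neighborhood by the model product metric $\omega_P+\omega_D$, where $\omega_P=i\,dz\wedge d\bar z/|z|^2(\log|z|^2)^2$ is the cusp-Poincar\'e metric in the transverse coordinate. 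Under the identification $\hcal_{k+1}\cong H^0(X,L^{k+1}-D)$, a section $t\in\hcal_{k+1,a+1}$ reads $t=h(z,w)(dz\wedge dw)^{k+1}$ with Laurent expansion $h=\sum_{\alpha\geq a+1}h_\alpha(w)\,z^{\alpha-k-1}$, where $\alpha$ is the order of vanishing along $D$ of the corresponding section of $L^{k+1}$. Rotational invariance of $\omega_P$ makes these modes pairwise orthogonal in the model, and the pointwise-to-$L^2$ ratio $|t(p_0)|^2_{CY}/\|t\|^2$ is then bounded by the supremum, over $\alpha\geq a+1$, of a radial factor times a tangential ($\omega_D$-Bergman) factor depending on $\alpha$.

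The radial factor, already controlled in the paper via the gamma identity
\[
\|z^{-j}(dz)^{k+1}\|^2_{L^2(\Delta^*,\omega_P)}=\pi\,\alpha^{-(2k+1)}\Gamma(2k+1),\qquad \alpha=k+1-j,
\]
equals $\alpha^{2k+1}e^{-\alpha\tau_0}\tau_0^{2k+2}/(\pi\Gamma(2k+1))$, up to the $B$-correction. The decisive elementary estimate is the mode-to-mode ratio
\[
\frac{(a+1)^{2k+1}e^{-(a+1)\tau_0}}{a^{2k+1}e^{-a\tau_0}}=\Big(1+\tfrac{1}{a}\Big)^{2k+1}e^{-\tau_0}.
\]
Taking logs with $(2k+1)\log(1+1/a)=2k/a-k/a^2+O(k/a^3+1/a)$ and inserting the hypothesis $\tau_0>2k/a-(B/2)\log(2k/a)-k/(2a^2)$, the ratio is bounded by $(2k/a)^{B/2}e^{-k/(2a^2)+O(k/a^3+1/a)}$. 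Under $a=O(\sqrt{k}/\log k)$ one has $k/a^2\gtrsim(\log k)^2$ while $k/a^3$ and $1/a$ tend to $0$, so this is $e^{-c(\log k)^2}$, hence $\epsilon(k)$. Summing over the $O(k)$ surviving modes $\alpha\geq a+1$ costs at most a polynomial factor. The tangential $\omega_D$-Bergman density appears equally in the bound for $\rho_{k+1,a+1}$ and in the peak-mode value of $\rho_k$ on the $a$-th shell provided by Theorem \ref{thm-main-inside} and Propositions \ref{prop-upper-bound}, \ref{prop-lower-bound}, so it cancels modulo a polynomial-in-$k$ shift coming from $N_D^{\vee\alpha}$ versus $N_D^{\vee(\alpha+1)}$, yielding $\rho_{k+1,a+1}=\epsilon(k)\rho_k$.

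The main obstacle is that the Laurent-mode orthogonality and the product-metric factorisation are exact only in the model, so one must verify that the error from replacing $\omega_{KE}$ by $\omega_P+\omega_D$ and from truncating to the tubular neighborhood is strictly smaller than the gain $e^{-k/(2a^2)}$. This forces a quantitatively sharp use of Schumacher's asymptotics and of Lemma \ref{lem-concentrated}, and it is also the reason why $a=O(\sqrt{k}/\log k)$ is precisely the right range: beyond it the exponential $e^{-k/(2a^2)}$ is no longer strong enough to absorb the subleading $(2k/a)^{B/2}$ correction and the tangential mismatches arising from the twist $N_D^{\vee\alpha}$.
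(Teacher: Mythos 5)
Your strategy is essentially the one the paper uses: decompose fibre-wise into Laurent/Fourier modes in the transverse coordinate, show that every mode $\alpha\geq a+1$ is exponentially suppressed relative to the $a$-th mode at the radial level in question, and observe that the tangential ($\omega_D$-Bergman) factor cancels against the same factor in $\rho_{k+1}$. These are exactly the inputs of Proposition~\ref{prop-hka}, Proposition~\ref{prop-upper-bound}, Proposition~\ref{prop-lower-bound} and Lemma~\ref{lem-concentrated}, and the paper's proof of the theorem is nothing but a short reduction to them: one normalizes coordinates so that $p_0=(0,w_n)$ with $|x'|^2=O(|w_n|^3)$, checks that $\tau(p_0)=t+\epsilon(k)$ with $t=-\log|w_n|^2$, and then concludes. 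You are therefore re-deriving what the cited propositions already encapsulate; there is nothing wrong with that, but the shorter route is to invoke them directly.

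There is, however, one quantitative slip in your mode-ratio estimate that you should repair. You write $(2k+1)\log(1+1/a)=2k/a-k/a^2+O(k/a^3+1/a)$ and then assert that under $a=O(\sqrt{k}/\log k)$ "one has $k/a^2\gtrsim(\log k)^2$ while $k/a^3$ and $1/a$ tend to $0$." That last clause is false: $a=O(\sqrt{k}/\log k)$ is only an upper bound on $a$, and for $a=1$ both $k/a^3=k$ and $1/a=1$ are not small, so your displayed $O$-terms could a priori wipe out the $-k/(2a^2)$ gain. The clean way to see that the conclusion survives is to write the exponent as $-k\,g(1/a)+O(\log k)$ with $g(x)=2x-\tfrac{x^2}{2}-2\log(1+x)$; one computes $g(0)=0$ and $g'(x)=x(1-x)/(1+x)>0$ for $x\in(0,1)$, so $g(1/a)>0$ for all integers $a\geq 1$, with $g(1/a)\sim 1/(2a^2)$ as $a\to\infty$ and $g(1)=\tfrac32-2\log 2>0$. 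Hence the exponent is $\lesssim -c\min\{k,(\log k)^2\}$ uniformly over $1\leq a=O(\sqrt{k}/\log k)$, and the polynomial prefactor $(2k/a)^{B/2}$ together with the $O(k)$ sum over modes is absorbed. With that correction your sketch matches the paper's argument; the remaining concern you flag about the Poincar\'e-product model versus $\omega_{KE}$ is precisely what the Laplace-method treatment of $J_a$ with the full $h_0(t)$ (including the $B\log t/t$ and $A/t$ corrections) and the H\"ormander lower-bound construction take care of in Sections~\ref{sec-upper} and~\ref{sec-lower}.
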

Back to the shells of $\rho_k$, we expect that $\varrho_{k,a}$ only plays a role in the $a$-th shell, namely its contribution to the other shells is $\epsilon(k)$. If that is the case, then together with theorem \ref{thm-main-hka}, we can conclude that $\varrho_{k,a}$ dominates the $a$-th shell. However, we can only show this is indeed the case for small $a$, namely $a=O(1)$. The corresponding $a$-shells will be called inner shells. Our second main result is about the inner shells.

\begin{theo}\label{thm-main-vka}
    
    Let $a$ be a fixed integer. We have, for the points satisfying $\tau<\frac{2k}{a}-\frac{\sqrt{k}\log k}{a}$,
    $$\varrho_{k+1,a}=\epsilon(k).$$

\end{theo}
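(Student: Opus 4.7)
The plan is to express $\varrho_{k+1,a}(p_0)$ via the isomorphism $\gcal_{k+1,a}\cong H^0(D,(L^{k+1}-aD)|_D)$ and reduce to standard Bergman-kernel asymptotics on the compact Kähler--Einstein manifold $D$. Near a point of $D$, choose a chart $(z',z_n)$ with $D=\{z_n=0\}$. For $f=j_a(\tilde s)\in\gcal_{k+1,a}$ we have the local expression $f=u(z)\,z_n^{a-k-1}(dz^I)^{k+1}$ with $u$ holomorphic; expanding $u(z',z_n)=\sum_{j\geq 0}u_j(z')\,z_n^j$, the leading coefficient $\sigma(z')=u_0(z')$ is a section of $(k+1)K_D-aD|_D$ on $D$ (by adjunction $L|_D=K_D$), and this correspondence realizes the isomorphism.

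Using Schumacher's asymptotic $\det g_{KE}\sim\det g_D\cdot e^{\tau}/\tau^2$ near $D$ (where $\tau=-\log|z_n|^2$), the pointwise norm is $|f|^2_{CY}\sim |u|^2\,\phi(\tau)/(\det g_D)^{k+1}$ with $\phi(\tau)=\tau^{2(k+1)}e^{-a\tau}$, peaked at $\tau_*=2(k+1)/a\approx 2k/a$, matching the $a$-th shell. Computing $\|f\|^2_{L^2,CY}$ in polar coordinates in $z_n$ and integrating out the angular variable gives
\[
\|f\|^2_{L^2,CY}\approx \pi\sum_{j\geq 0}\frac{(2k)!}{(a+j)^{2k+1}}\int_{D}\frac{|u_j(z')|^2}{(\det g_D(z'))^k}\,dv_D.
\]
The ratios $(a/(a+j))^{2k+1}$ are exponentially small in $k$ for fixed $a$ and $j\geq 1$, and the orthogonality $f\perp\hcal_{k+1,a+1}$ ensures the $j\geq 1$ contributions are subdominant, giving $\|f\|^2_{L^2,CY}\approx\pi(2k)!/a^{2k+1}\cdot\|\sigma\|^2_{L^2(D)}$ to leading order. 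Summing $|\tilde e_i(p_0)|^2_{CY}$ over an orthonormal basis of $\gcal_{k+1,a}$ lifted from one on $H^0(D,(k+1)K_D-aD|_D)$ yields
\[
\varrho_{k+1,a}(p_0)\approx \frac{\phi(\tau_0)\,\rho^D(z_0')}{\pi(2k)!/a^{2k+1}},
\]
with $\rho^D=O(k^{n-1})$ by Tian--Zelditch--Catlin on the compact \KE manifold $D$.

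To conclude, set $y=a\tau_0/(2k)$ so that $\phi(\tau_0)/\phi(\tau_*)=(y\,e^{1-y})^{2k}$, and apply Stirling's formula to combine the factors into
\[
\varrho_{k+1,a}(p_0)\lesssim k^{n+1/2}\,(y\,e^{1-y})^{2k}.
\]
For $\tau_0<2k/a-\sqrt{k}\log k/a$ we have $1-y\geq \log k/(2\sqrt{k})(1+o(1))$, and the convexity bound $\log(ye^{1-y})\leq -(1-y)^2/2$ on $(0,1]$ then gives $(y\,e^{1-y})^{2k}\leq e^{-(\log k)^2/4}$, which decays faster than any polynomial in $1/k$ and absorbs the polynomial prefactor, yielding $\varrho_{k+1,a}(p_0)=\epsilon(k)$. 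The main technical obstacle is rigorously controlling the $\epsilon(k)$-error in the local reduction $\|f\|^2_{L^2,CY}\approx \pi(2k)!/a^{2k+1}\|\sigma\|^2_{L^2(D)}$: this requires (i) precise asymptotic expansions for $\omega_{KE}$ beyond Schumacher's leading term, and (ii) a rigorous argument that orthogonality with $\hcal_{k+1,a+1}$ forces the Taylor coefficients $u_j$ for $j\geq 1$ to contribute only subdominantly to both the pointwise and $L^2$ quantities. The fixed-$a$ hypothesis is essential for preserving the exponential weight separation $(a/(a+1))^{2k+1}$ between $A_0$ and $A_1$; without it, the isolation of the $j=0$ term would weaken, which is why the analogous analysis for larger $a$ appears out of reach with the present technique.
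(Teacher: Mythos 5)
Your heuristic picture is right: the profile $\phi(\tau)=\tau^{2(k+1)}e^{-a\tau}$, the peak at $\tau_*\approx 2k/a$, the reduction to the compact Bergman kernel on $D$ for the $j=0$ coefficient, and the Gaussian-type decay $e^{-c(\log k)^2}$ away from the peak all match the paper's estimates. But the step you flag as a ``technical obstacle'' at the end --- that orthogonality $f\perp\hcal_{k+1,a+1}$ forces the higher Taylor coefficients $u_j$ ($j\geq 1$) to contribute subdominantly --- is not a technical refinement; it is the actual theorem being proved, and your proposal does not contain an argument for it. Orthogonality in the Hilbert space $\hcal_{k+1}$ is a global integral condition and says nothing directly about the size of the $z_n$-Taylor coefficients of $f$ in a tubular neighborhood of $D$. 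A priori, the minimal extension $\pi_a^{-1}(s)$ could have $u_j$ for $j\geq 1$ of the same order as $u_0$, in which case the $L^2$-norm would not be dominated by the $j=0$ term, the Gram matrix of your lifted basis would not be close to the identity, and the pointwise estimate would not follow. The same problem hits your ``summing $|\tilde e_i(p_0)|^2$'' step: you need the lifts to be an almost-orthonormal family, and this again requires control of the $j\geq 1$ content.

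The paper closes this gap with a substantial argument: it invokes Finski's $L^2$-extension theorem to produce an explicit extension $T(s)$ on $X$, cuts it off to the region $\sigma>qk$ and corrects by a H\"ormander $\dbar$-solution to obtain $E(s)\in\hcal_{k+1}$ with provably concentrated mass (Lemma \ref{lem-es}), and then proves by induction on $a$ --- using Theorem \ref{thm-main-hka} to kill the $\hcal_{k+1,a+1}$ component and the inductive hypothesis to kill the $\oplus_{b<a}\gcal_{k+1,b}$ component --- that $\|E(s)-\pi_a^{-1}(s)\|^2_{CY}=\epsilon(k)\|E(s)\|^2_{CY}$, with $\epsilon(k)$ uniform in $s$. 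Only after this comparison can one transfer the concentration property of $E(s)$ to the actual minimal extension $\pi_a^{-1}(s)$ and hence to an orthonormal basis of $\gcal_{k+1,a}$. You would also need a separate argument (as the paper notes, via peak sections) for the regime $\tau\leq 2k^{1/3}$, where the asymptotic $h_{CY}\sim Ct^2$ is not yet valid. In short, your computation of the ``$j=0$ contribution'' is correct, but the justification that it is the dominant contribution is missing, and that justification is the hard part.
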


\

The structure of this article is as follows. We will first recall the asymptotic expansion of the Cheng-Yau metric around $D$ developed by Schumacher, Wu, Rochon-Zhang and Jiang-Shi etc. Then we estimates the Bergman kernel on disks vertical to $D$. Then we compare the Bergman kernel $\rho_k$ with the Bergman kernel on vertical disks by proving upper bounds in section \ref{sec-upper} and lower bounds in section \ref{sec-lower}. With these bounds, we then quickly prove theorems \ref{thm-main-inside}, \ref{thm-main-neck} and \ref{thm-main-hka}. In the end, by using extension theorems in the compact case, we will study the inner shells and prove theorem \ref{thm-main-vka}.

\textbf{Acknowledgements.} The author would like to thank Professor Yalong Shi for many helpful discussions.
\section{setting-up}
\subsection{General notions} In this article, we use the traditional notations $\Z,\Q,\R,\C$ for the sets of integers, rationals, real numbers and complex numbers respectively.

For a section $s$ of a hermitian line bundle, we use $|s|$ to denote the point-wise norm of $s$ and use $\parallel s\parallel$ to denote the $L_2$-norm. 

When we use Greek subscript $\alpha$ and $\beta$, we usually mean something related to a connected components of $D$.

For a local complex coordinates $z=(z_1,z_2,\cdots,z_n)$, we write $z'$ for the tuple $(z_1,z_2,\cdots,z_{n-1})$, so $z=(z',z_n)$. And to simplify the notations, we use the convention $$|dz|^2=(\sqrt{-1})^{n}dz_1\wedge d\bar{z}_1\wedge\cdots\wedge dz_{n}\wedge d\bar{z}_{n},$$
and also
$$|dz'|^2= (\sqrt{-1})^{n-1}dz_1\wedge d\bar{z}_1\cdots\wedge dz_{n-1}\wedge d\bar{z}_{n-1}.$$
\subsection{asymptotics of Cheng-Yau metric}

Given a hermitian metric $h$ on $L$ with positive curvature and any hermitian metric on $[D]$, the Carlson-Griffiths metric is defined as $$\omega_{CG}=\omega-\sqrt{-1}\ddbar\log (\log \frac{1}{\epsilon|s_D|^2})^2,$$
where $\omega=\sqrt{-1}\Theta_L$. 
When $\epsilon$ is small enough, $\omega_{CG}$ is a complete \kahler metric with finite volume on $\xmd$. By rescaling $s_D$ by $\sqrt{\epsilon}$, one can assume that $\epsilon=1$.
To find $\omega_{KE}$, one solves the complex Monge-Amp\'{e}re equation 
$$\log \frac{(\omega_{CG}+\sqrt{-1}\ddbar u)^n}{\omega_{CG}^n}-u=f,$$ 
where $f$ is smooth on $X\backslash D$ satisfying $\Ric(\omega_{CG})+\omega_{CG}=\sqrt{-1}\ddbar f$. Locally, if $e_D$ is a local frame of $[D]$, and $z=(z_1,\cdots,z_n)$ a local coordinates, then $e_L=e_D\otimes (dz_1\wedge\cdots\wedge dz_n)$ is a local frame of $L$. If $h$ is represented by $|e_L|^2_h=e^{-\psi}$ and the metric on $[D]$ by $|e_D|^2e^{-\phi}$, then there is a volume form $\gamma|dz|^2$ such that $e^{-\psi}=\frac{e^{-\phi}}{\gamma}$. Then 
$f$ can be chosen as $$\log \frac{\gamma|dz|^2}{|s_D|^2(\log |s_D|^2)^2\omega_{CG}^n}.$$ 
It has been proved that $\omega_{KE}$ is equivalent to $\omega_{CG}$.
The asymptotic expansion of $u$ in terms of $x=\frac{1}{-\log |s_D|^2 }$ was first obtained by Wu in \cite{WuDM}. Later Rochon and Zhang in \cite{RochonZhang} obtained a more precise asymptotic expansion and pointed out that there should in general be a $x\log x$ term. In \cite{Shi}, Jiang-Shi gave a different proof in the case when $D$ is smooth. We have basically followed the terminologies from \cite{Shi} for their simplicity.

\

Let $N_D$ be the normal bundle of $D$, with a hermitian metric defined by $\omega$. Let $N_D(R)\subset N_D$ be the disk bundle of radius $R$, and let $\Exp:N_D(R)\to X$ be the exponential map defined by the Riemannian metric associated to $\omega$. Then for $R$ small enough $\Exp$ is a diffeomorphism from $N_D(R)$ to its image. So the projection $N_D(R)\to D$ defines a projection $\pi_{\omega}$ from a neighborhood of $D$ in $X$ to $D$.

The main theorem in \cite{Shi} is
\begin{theo}
    Let $\omega_{KE}=\omega_{CG}+\sqrt{-1}\ddbar u$ be the unique complete \KE metric with finite volume on $X\backslash D$, and let $x=(-\log r^2)^{-1}$, where $r$ is the distance to $D$ with respect to some fixed \kahler metric on $X$. Then around $D$ we have a poly-homogeneous asymptotic expansion for $u$:
    $$u\sim \sum_{i\in I}\sum_{j=0}^{N_i}c_{i,j}x^i(\log x)^j,$$
    where $I$ is the index set determined by the eigenvalues of the Laplacian operator of the unique \KE metric on $D$ and $c_{i,j}$'s  are smooth functions on $D$, regarded as functions in a neighborhood of $D$ via the projection $\pi_{\omega}$. The precise meaning of the above expansion is  that 
    $$u-\sum_{i\in I, i\leq k}\sum_{j=0}^{N_i}c_{i,j}x^i(\log x)^j=O(x^{k+}),$$
    where $O(x^{k+})$ is the next term of $k$ in $I$. 
\end{theo}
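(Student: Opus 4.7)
Since $u$ is determined by the complex Monge-Amp\`ere equation $\log \frac{(\omega_{CG} + \sqrt{-1}\ddbar u)^n}{\omega_{CG}^n} - u = f$, my plan is to construct a formal poly-homogeneous series that solves this equation to all orders, and then upgrade it to a genuine asymptotic expansion of the unique solution. First I would localize near $D$ via the exponential map $\Exp$, working in coordinates $(z', z_n)$ with $D=\{z_n=0\}$ and with $x=(-\log|z_n|^2)^{-1}$ as the radial variable. In such coordinates $\omega_{CG}$ is asymptotically a product of the Poincar\'e cusp in $z_n$ and $\omega_D$ on $D$, and $f$ is smooth up to $D$ with its own $x$-expansion. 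The linearization of the Monge-Amp\`ere operator at $u=0$ is then Fuchsian in $x$ with tangential dependence on $D$, and its indicial equation involves $\Delta_{\omega_D}$; this is precisely what forces the admissible exponents $i\in I$ to be determined by $\Spec(\Delta_{\omega_D})$.

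Next I would insert the ansatz $u\sim\sum_{i\in I}\sum_{j=0}^{N_i}c_{i,j}(z')\,x^i(\log x)^j$, expand the determinant and the logarithm as formal series in $x$ and $\log x$, and match coefficients order by order. At each step one obtains an equation of the form $(\Delta_{\omega_D}-\lambda_i)c_{i,j}=g_{i,j}$ on $D$, with $g_{i,j}$ built from lower-order coefficients and from the $x$-expansion of $f$. When $\lambda_i\notin\Spec(\Delta_{\omega_D})$ one solves directly; when $\lambda_i$ is an eigenvalue, a $(\log x)$ factor must be introduced to absorb the obstruction, incrementing $N_i$. A careful combinatorial bookkeeping, following \cite{Shi}, then shows that the index set generated by iterating these operations is closed under the nonlinearity of the Monge-Amp\`ere operator and that each $N_i$ remains finite.

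The final step is to show the formal series is actually asymptotic. Truncating at a high order $k$ gives an approximate solution $u_k^{\mathrm{app}}$; writing $u=u_k^{\mathrm{app}}+v$ yields a schematic equation $Lv=R_k+N(v)$, where $L$ is the linearization, $R_k=O(x^{k+})$ is the truncation error, and $N(v)$ vanishes quadratically in $v$. Using weighted H\"older spaces adapted to the asymptotically complex-hyperbolic end (as in \cite{WuDM, RochonZhang}), one obtains uniform invertibility of $L$ between such spaces and closes a contraction-mapping argument to produce $v=O(x^{k+})$; uniqueness of the \KE potential then identifies this $v$ with the true remainder, so the expansion follows in the sense stated. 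The main obstacle is the resonance bookkeeping: because $\sqrt{-1}\ddbar$ mixes $x$- and $z'$-derivatives and the nonlinear term couples different $(i,j)$, one must verify that $I$ and the $N_i$ are genuinely stable under the full recursion driven by the equation. A closely related difficulty, which underlies the whole scheme, is establishing sharp mapping properties of $L$ on the weighted spaces given that the cross-sections of the end degenerate to a Poincar\'e cusp, and it is precisely this analysis that forces $\Spec(\Delta_{\omega_D})$ to appear as the index set $I$.
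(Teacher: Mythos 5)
First, a point of comparison: the paper does not prove this statement at all. It is quoted verbatim as the main theorem of \cite{Shi} (Jiang--Shi), building on \cite{WuDM} and \cite{RochonZhang}, and the paper only uses its conclusions (the terms with $i\leq 1$, the constancy of $c_{1,1}$, etc.). So your proposal can only be measured against the strategy of those references, and indeed your outline --- formal poly-homogeneous ansatz, indicial analysis of a Fuchsian-type linearization whose indicial roots are governed by the spectrum of $\Delta_{\omega_D}$, log terms inserted at resonances, then an analytic step upgrading the formal series to a genuine asymptotic expansion --- is the standard scheme those papers follow.

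However, your final step has a genuine gap. You propose to solve $Lv=R_k+N(v)$ by contraction mapping near $D$ and then invoke ``uniqueness of the K\"ahler--Einstein potential'' to identify $u_k^{\mathrm{app}}+v$ with the true solution $u$. Uniqueness (Cheng--Yau, Kobayashi, Tian--Yau, Bando) is a statement about complete finite-volume K\"ahler--Einstein metrics on all of $X\setminus D$; a solution produced only on a collar neighborhood of $D$ is not covered by it, and a contraction argument on a collar requires boundary data at the inner edge that you have not specified, so the identification does not follow. What the cited proofs actually do is the reverse: they start from the already known global solution $u$, establish an initial decay estimate ($u\to 0$ at a definite rate in $x$, which itself must be proved, e.g.\ via Schumacher-type estimates or the barrier construction in the existence proof), and then bootstrap: subtract the partial expansion, plug the remainder into the Monge--Amp\`ere equation, and use the mapping/decay properties of the linearized operator on the cusp end (maximum principle or weighted Schauder estimates, with the resonance analysis generating the $\log x$ terms) to improve the decay order by order. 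Without that initial decay input and with the identification step replaced by an inapplicable uniqueness appeal, your argument does not close; the resonance bookkeeping you flag as the ``main obstacle'' is real but is the part the references do carry out, whereas the missing link in your plan is how the formal solution is tied to the actual $u$ in the first place.
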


Let $\sigma=-\log |s_D|^2$, they also showed that $x$ can be replaced by $\frac{1}{\sigma}$. So we will use $x=\frac{1}{\sigma}$.

In \cite{RochonZhang}, Rochon-Zhang showed that the pairs $(i,j)$ that appear in the expansion satisfying $i\leq 1$ are $(0,0),(1,1)$ and $(1,0)$.
They also showed that $c_{1,1}=\frac{4(n-1)}{3}\frac{[D_\alpha]\cdot K_{D_\alpha}^{n-2}}{K_{D_\alpha}^{n-1}}$ is a constant around each ${D_\alpha}$. If the restriction of $\omega$ to $D$ is the \KE metric on $D$, then in \cite{Shi}, they showed that $c_{0,0}$ is a constant around each ${D_\alpha}$ and $f=-c_{0,0}+O(x)$. 
So under this assumption, we have 
$$\frac{\omega_{KE}^n}{\omega_{CG}^n}=e^{u+f}=O(\frac{\log \sigma}{\sigma}).$$
Since $$e^f=\frac{\omega^n}{|s_D|^2(\log |s_D|^2)^2\omega_{CG}^n},$$
we have $$\omega_{KE}^n=e^u\frac{\gamma|dz|^2}{|s_D|^2(\log |s_D|^2)^2}.$$
Clearly, adding a constant to $f$ while substracting the same constant from $u$ does not affect the Monge-Amp\'{e}re equation. So when the restriction of $\omega$ to $D$ is $\omega_D$ we can simply make $c_{0,0}=0$.

Around each point $p\in D$, we can find local coordinates $(z_1,\cdots,z_n)$ centered at $p$ such that $D$ is defined by $z_n=0$. So $z'=(z_1,\cdots,z_{n-1})$ are local coordinates of $D$.
For any local frame $e_D$ of $[D]$, $s_D=z_nG(z)e_D$ for some holomorphic function $G(z)$ that is non-vanishing on $D$. So $|s_D|^2=|z_n|^2|G|^2e^{-\phi}$. Therefore
\begin{equation}\label{eqn-wke}
\omega_{KE}^n=e^u\frac{\gamma|dz|^2}{|z_n|^2|G|^2e^{-\phi}\sigma^2}=
e^u\frac{e^{\psi}/|G|^2}{|z_n|^2\sigma^2}|dz|^2
\end{equation}

We can define $ds_D=G(z)dz_n\otimes e_D$ which is globally defined as a section of $N_D^*\otimes [D]$, where $N_D^*$ is the conormal bundle of $D$. Then we recall that the identification of $L|_D$ with $K_D$ is given by $s\mapsto \frac{s}{ds_D}$ for any local section $s$ of $L$, which in local coordinates is 
$$Fe_D\otimes dz_1\wedge\cdots\wedge dz_n\mapsto \frac{F}{G}dz_1\wedge\cdots\wedge dz_{n-1}. $$
Under this identification, each hermitian metric $h$ on $L$ defines a hermitian metric on $K_D$. 

Let $\omega_D$ be the \KE metric on $D$ satisfying $\Ric(\omega_D)=-\omega_D$. Then $|dz_1\wedge\cdots\wedge dz_{n-1}|^2_{\omega_D}=\frac{(n-1)!|dz'|^2}{\omega_D^{n-1}}$ defines a hermitian metric on $K_D$,
hence on $L|_D$ under the identification. By theorem 4 in \cite{Schum98}, we can extend this metric to a hermitian metric $h$ on $L$ whose curvature is $-i\omega$, where $\omega\in c_1(L)$ is a \kahler form. So we have $j^*\omega=\omega_D$, where $j$ is the embedding of $D\to X$. 

\

From now on, we will fix such an $h$ and $\omega$.

\

If we write $\omega=\sqrt{-1}\sum \tij dz_i\wedge\d\bar{z}_j$, we define $H=\det \{\tij \}_{i<n,j<n}$.
Then we have $H(z',0)=\frac{\omega_D^{n-1}}{(n-1)!|dz'|^2}$. If $h$ is represented by $|e_L|^2_h=e^{-\psi}$, then since $h$ extends the metric on $K_D$ defined by $\frac{(n-1)!|dz'|^2}{\omega_D^{n-1}}$, we get that 
\begin{equation}\label{eqn-H}
H(z',0)=\frac{e^{\psi(z',0)}}{|G(z',0)|^2}.
\end{equation}
In local coordinates, if we write $\omega_{CG}=\sqrt{-1}\sum g_{i\bar{j}}dz_i\wedge\d\bar{z}_j$, then 
$\gij=\tij-2x\pij+\frac{2x^2}{r^2}V_i\bar{V}_j$, where $V_i=r\phi_i-\delta_{in}e^{-\sqrt{-1}\theta}$.  Then we have
$$\omega_{CG}^n=(1-2x\Delta_D\phi+O(x^2))\frac{2n!x^2}{r^2}H(\sqrt{-1})^{n}dz_1\wedge d\bar{z}_1\wedge\cdots\wedge dz_{n}\wedge d\bar{z}_{n}.$$
Let $\omega_0=\sqrt{-1}\sum g'_{i\bar{j}}dz_i\wedge\d\bar{z}_j$, where $g'_{i\bar{j}}$ satsifies
\begin{itemize}
    \item $g'_{i\bar{j}}(z)=g_{i\bar{j}}(z',0)$ for $i<n, j<n$;
    \item $g'_{n\bar{n}}=\frac{2}{r^2(\log r^2)^2}$;
    \item $g'_{i\bar{n}}=0$ for $i<n$.
\end{itemize}
Then for any tangent vector $v=\sum a_i\frac{\partial}{\partial z_i}$, we have 
$$|v|^2_{\omega_{CG}}=(1+O(x))|v|^2_{\omega_{0}}.$$
It is easy to estimate the injectivity radius under the metric $\omega_0$. There are positive constants $\lambda_1,\lambda_2$ such that the injectivity radius $\nu_0(z)$ satsifies $$\lambda_1\frac{1}{-\log r^2}<\nu_0(z)<\lambda_2\frac{1}{-\log r^2}. $$ 
So we can estimate the injectivity radius of the metric $\omega_{CG}$, and hence that of $\omega_{KE}$. Therefore there are positive constants $\lambda'_1,\lambda'_2$ such that the injectivity radius $\nu_{KE}(z)$ under the Riemannian metric correspongding to $\omega_{KE}$ satsifies $$\lambda'_1\frac{1}{-\log r^2}<\nu_{KE}(z)<\lambda'_2\frac{1}{-\log r^2}. $$ 

\

\section{fiber-wise calculations}

Denote by $\phi_1=\phi-\log |G(z',0)|^2$.
For fixed $z'$, let $t=-\log |z_n|^2$, then $\sigma=\phi_1 +t$. Then since
$\frac{1}{\sigma}=\frac{1}{t}(1+\sum_{i=1}^{\infty}(-\frac{\phi_1}{t}))^i$, the expansion of $u$ in terms of $\frac{1}{\sigma}$ can be rewritten as expansion in terms of $\frac{(\log t)^j}{t^i}$. Therefore, since $|z_n|=O(x^\infty)$, by formula \ref{eqn-H},
\begin{equation}
\omega_{KE}^{n}=n!\frac{H(z',0)}{Yt^2|z_n|^2}|dz|^2,
\end{equation}
where $$Y=1+c_{1,1}\frac{\log t}{t}+\frac{1}{t}(2\phi_1(z',0)-c_{1,0}(z',0))+\sum_{i\in I'}\sum_{j=0}^{N'_i}b_{i,j}\frac{(\log t)^j}{t^i},$$ where, since the eigenvalues of the Laplacian of $\omega_D$ is discrete, $I'$ is an discrete index set with $\min\{i|i\in I'\}>1$.
Let $e_K=\frac{1}{z_n}dz_1\wedge\cdots\wedge dz_{n}$ be a local frame of $K_X$ on $\xmd$, then the Cheng-Yau metric is $h_{CY}=|e_K|^2_{CY}=Y\frac{t^2}{H(z',0)}$.

For a fixed $z'$, we write $A=2\phi_1(z',0)-c_{1,0}(z',0)$, $B=c_{1,1}$, $C=\frac{1}{H(z',0)}$, then $h_{CY}=Ct^2(1+\frac{B\log t}{t}+\frac{A}{t}+\sum_{i\in I'}\sum_{j=0}^{N'_i}b_{i,j}\frac{(\log t)^j}{t^i})$.

For any holomorphic section $s\in \hcal_{k+1}$, locally $s=F(z)(dz_1\wedge\cdots\wedge dz_n)^{\otimes (k+1)}$. Then $$|s|_{CY}^2=|F|^2h_{CY}^{k+1}.$$
So $|s|_{CY}^2\frac{\omega_{KE}^n}{n!}=|F|^2h_{CY}^{k}|dz|^2$. To calculate the integral of $|s|_{CY}^2\frac{\omega_{KE}^n}{n!}$, we look at the integral of $|F|^2h_{CY}^{k}$ on $z_n$-disks.


Let $h_0(t)=t^2(1+\frac{B\log t}{t}+\frac{A}{t}+\sum_{i\in I', i\leq 6}\sum_{j=0}^{N'_i}b_{i,j}(z',0)\frac{(\log t)^j}{t^i})$, then for $t>k^{1/3}$, $h_{CY}^{k}=C^kh_0(t)^{k}(1+O(\frac{k}{t^{6+\epsilon}}))$ for some $\epsilon>0$.

Let $\mathbb{B}_3\subset \C$, with coordinate $w$, be the ball defined by $t=-\log |w|^2>k^{1/3}$. Let $\alpha=\frac{\sqrt{-1}dw\wedge d\bar{w}}{t^2|w|^2 \rho_0}$
We study the Bergman kernel of $(\mathbb{B}_3,h_0^{k+1},\alpha)$. So the norms of the functions $w^i$ are $\parallel w_i\parallel^2=2\pi J_i,$ where
$$J_i=\int_0^{k^{1/3}}h_0^k(t)e^{-it}dt.$$
Then the Bergman kernel is 
$$\mathfrak{B}_{k+1}=h_0^{k+1}(t)\sum_{i=1}^\infty\frac{|w|^{2i}}{2\pi J_i}. $$
And we can define $$\mathfrak{B}_{k+1,a}=h_0^{k+1}(t)\sum_{i\geq a}^\infty\frac{|w|^{2i}}{2\pi J_i},$$
for $a\geq 1$.

The ideas for studying $\mathfrak{B}_{k+1}$ are very close to those developed from \cite{SunSun} to \cite{Punctured}. Since the difference of the settings are not negligible, we have to include a proof here.
We first estimate $J_a$ for $a=O(\sqrt{k}\log k)$.

We have $$J_a=\int_0^{k^{1/3}}e^{g_a(t)}dt,$$
where $g_a(t)=k\log h_0(t)-at=2k\log t+kB\frac{\log t}{t}+k\frac{A}{t}+\sum_{i\in I''}\sum_{j=0}^{N''_i}kb'_{i,j}\frac{(\log t)^j}{t^i} $, with $\min\{i|i\in I''\}>1$. We denote by $\kappa(t)=\sum_{i\in I''}\sum_{j=0}^{N''_i}b'_{i,j}\frac{(\log t)^j}{t^i}$. Then 
$g'_a(t)=\frac{2k}{t}+k\frac{B-A}{t^2}-kB\frac{\log t}{t^2}-a+k\kappa'(t)$ and $g''_a(t)=-\frac{2k}{t^2}+k\frac{2A-3B}{t^3}+2kB\frac{\log t}{t^3}+k\kappa''(t)$.  Since $\kappa''(t)=o(\frac{1}{t^3})$, we have that 
$$g''_a(t)<0,$$
for $k$ large enough. So $g_a(t)$ is a strictly concave function which attains its only maximum at $t_a$ satisfying
$$\frac{2k}{t_a}+k\frac{B-A}{t_a^2}-kB\frac{\log t_a}{t_a^2}+k\kappa'(t_a)=a.$$
It is easy to see that $t_a=\frac{2k}{a}+O(\log k)$.
Since $g^{(3)}_a(t)=-\frac{4k}{t^3}-6kB\frac{\log t}{t^4}+k\frac{11B-6A}{t^4}+k\kappa^{(3)}(t)$, we have
$g''_a(t)=g''_a(\frac{2k}{a})+O(\frac{a^3\log k}{k^2})=-\frac{a^2}{2k}+O(\frac{a^3\log k}{k^2})$, for $t=\frac{2k}{a}+O(\log k)$. Therefore, we have 
\begin{eqnarray*}
    (t_a-\frac{2k}{a})(-\frac{a^2}{2k}+O(\frac{a^3\log k}{k^2}))&=&g_a'(t_a)-g'_a(\frac{2k}{a})\\&=&Ba^2\frac{\log \frac{2k}{a}}{4k}-a^2\frac{B-A}{4k}-k\kappa'(\frac{2k}{a})
\end{eqnarray*}
Therefore
$$t_a=\frac{2k}{a}-\frac{B}{2}\log\frac{2k}{a}+\frac{B-A}{2}+\frac{2k^2}{a^2}\kappa'(\frac{2k}{a})+O(\frac{a(\log k)^2}{k}).$$
If we let $t'_a=\frac{2k}{a}-\frac{B}{2}\log\frac{2k}{a}+\frac{B-A}{2}+\frac{2k^2}{a^2}\kappa'(\frac{2k}{a})$ and replace $\frac{2k}{a}$ by $t_a'$ to approximate $t_a$, we get that $t_a=t_a'+O(\frac{a(\log k)^2}{k})$. And we have
$g''_a(t)=g''_a(t_a')+O(\frac{a^4(\log k)^2}{k^3})=-\frac{a^2}{2k}+O(\frac{a^3\log k}{k^2})$, for $t=t_a'+O(\frac{a(\log k)^2}{k})$. Therefore, assuming $a=O(\sqrt{k}\log k)$, one can see that
$$t_a=\frac{2k}{a}-\frac{B}{2}\log\frac{2k}{a}+\frac{B-A}{2}+\mathfrak{K} (\frac{2k}{a})+O(\frac{(\log k)^2}{k^{1+\epsilon}}),$$
for some $\epsilon>0$, where $\mathfrak{K}(t)=\sum_{i\in \mathfrak{I} }\sum_{j=0}^{M_j}d_{i,j}\frac{(\log t)^j}{t^i}$ for some discrete index set $\mathfrak{I}$ satisfying $\min\{i\in \mathfrak{I}\}>0$ and $\max\{i\in \mathfrak{I}\}\leq 2$, and moreover, the coefficients $d_{i,j}$ are polynomials of $A,B$ and the coefficients of $\kappa$. So we have that for  $a=O(\sqrt{k}\log k)$, 
$$h_0(t_a)=(\frac{2k}{a})^2(1+\frac{Ba}{2k}+\mathfrak{K}'(\frac{2k}{a})+O(\frac{a(\log k)^2}{k^{2+\epsilon}})),$$ 
where the function $\mathfrak{K}'$ satisfies the same conditions as $\mathfrak{K}$.

The Gaussian integral $\int_{-\infty}^{\infty}e^{-x^2/2}dx=\sqrt{2\pi}$. The mass is concentrated within the region of radius $C\log k$, for a fixed $C>0$, in the sense that 
$$\int_{|x|>C\log k}e^{-x^2/2}dx=\epsilon(k).$$
And since $g''_a(t)=-\frac{a^2}{2k}(1+O(\frac{\log k}{\sqrt{k}}))$ for $|t-t_a|=O(\frac{\sqrt{k}\log k}{a})$ we have the following
\begin{lem}\label{lem-ja}
    For a fixed $C>0$, we have 
    $$J_a=(1+\epsilon(k))\int_{|t-t_a|<\frac{C\sqrt{k}\log k}{a}}e^{g_a(t)}dt.$$
    Therefore, we have
    $$J_a=h_0^k(t_a)e^{-at_a}\sqrt{\pi k}\frac{2}{a}(1+O(\frac{\log k}{\sqrt{k}}))$$
\end{lem}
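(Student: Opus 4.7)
The strategy is Laplace's method at the saddle point $t_a$. Since the previous paragraphs have already established that $g_a$ is strictly concave with unique maximum at $t_a$, that $g_a''(t_a) = -\frac{a^2}{2k}(1 + O(\log k/\sqrt{k}))$, and that $|g_a^{(3)}(t)| = O(k/t_a^3) = O(a^3/k^2)$ throughout the relevant range, the proof splits into a localization step and a Gaussian reduction step.

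For localization, I would use the global strict concavity: for $t$ outside the window $|t - t_a| < C\sqrt{k}\log k/a$, concavity combined with the lower bound $-g_a''(t) \ge c\,a^2/k$ on a neighborhood of $t_a$ yields the quadratic upper bound
\begin{equation*}
g_a(t) - g_a(t_a) \le -c\,\frac{a^2}{k}(t-t_a)^2 \le -c'(\log k)^2,
\end{equation*}
so that $e^{g_a(t)-g_a(t_a)} = \epsilon(k)$ there. After multiplying by the polynomially bounded interval length $k^{1/3}$, the contribution from the complement of the window is $\epsilon(k)$ times $e^{g_a(t_a)}$, which establishes the first claimed identity.

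Inside the window, I would Taylor expand $g_a$ around $t_a$, using $g_a'(t_a) = 0$ and the estimates above:
\begin{equation*}
g_a(t) - g_a(t_a) = -\frac{a^2}{4k}(t-t_a)^2 + O\Bigl(\tfrac{\log k}{\sqrt k}\Bigr)\cdot \frac{a^2}{4k}(t-t_a)^2 + O\Bigl(\tfrac{a^3}{k^2}\Bigr)(t-t_a)^3.
\end{equation*}
Substituting $u = \frac{a}{\sqrt{2k}}(t - t_a)$ turns the leading quadratic term into $-u^2/2$, and the substitution range becomes $|u| < C\log k/\sqrt{2}$. The two error terms are uniformly bounded by $O(\log k/\sqrt k)$ after exponentiation, and the standard fact that $\int_{|u|>C\log k}e^{-u^2/2}\,du = \epsilon(k)$ lets me extend the Gaussian integral to all of $\R$. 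One then computes
\begin{equation*}
J_a = e^{g_a(t_a)}\cdot \frac{\sqrt{2k}}{a}\cdot \sqrt{2\pi}\cdot \Bigl(1 + O\Bigl(\tfrac{\log k}{\sqrt k}\Bigr)\Bigr) = h_0^k(t_a)e^{-at_a}\cdot \frac{2\sqrt{\pi k}}{a}\Bigl(1 + O\Bigl(\tfrac{\log k}{\sqrt k}\Bigr)\Bigr).
\end{equation*}

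The main technical point is to ensure the Taylor error estimates hold \emph{uniformly} across the entire range $a = O(\sqrt{k}\log k)$. For $a$ near this upper endpoint, $t_a$ is only of order $\sqrt k/\log k$ and the derivatives $g_a^{(j)}(t_a)$ become relatively large, so one must verify that the concentration width $\sqrt k \log k/a$ is narrow enough that the cubic remainder $(a(t-t_a)/\sqrt k)^3/\sqrt k$ stays dominated by the quadratic term. This is exactly what the choice of window forces, using the explicit bounds on the coefficients $\kappa^{(j)}$ coming from the polyhomogeneous expansion; the relative variation of $g_a''$ over the window is itself $O(\log k/\sqrt k)$, which is what produces the stated error.
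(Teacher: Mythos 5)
Your proof is correct and follows exactly the approach the paper implicitly relies on: Laplace's method at the unique critical point $t_a$, using the concavity of $g_a$ to localize and the curvature estimate $g_a''(t_a) = -\tfrac{a^2}{2k}(1+O(\log k/\sqrt k))$ to obtain the Gaussian factor $\sqrt{2\pi\,|g_a''(t_a)|^{-1}} = \tfrac{2\sqrt{\pi k}}{a}$. One small imprecision worth noting: the error terms inside the window are not uniformly $O(\log k/\sqrt k)$ as you state --- at the edge $|u|\sim C\log k$ the cubic remainder and the variation of $g_a''$ are only $O((\log k)^3/\sqrt k)$ --- but after integrating against $e^{-u^2/2}$ the effective support is $|u|=O(1)$ and the odd cubic term integrates to zero to leading order, which is what actually produces the $O(\log k/\sqrt k)$ factor in the conclusion; if you only invoked the crude uniform bound you would obtain the weaker $O((\log k)^3/\sqrt k)$.
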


We can write $\frac{|w|^{2i}}{J_i}$ as $e^{-it-\log J_i}$. Let $\iota(i)=-it-\log J_i$ be the exponent. Clearly, $\iota(i)$ is defined for all real $i\geq 1$. We claim that $\iota(i)$ is a concave function of $i$. We can calculate
\begin{eqnarray*}
    \iota''(i)&=&-\frac{d^2}{di^2}\log J_i\\
    &=&\frac{1}{J^2_i}[(\int th_0^k(t)e^{-it}dt)^2-\int h_0^k(t)e^{-it}dt\int t^2h_0^k(t)e^{-it}dt]\leq 0,
\end{eqnarray*}
by Cauchy-Schwarz inequality. We denote by $\Lambda_i(t)=h_0^k(t)e^{-it-\log J_i}$. Then $$ \mathfrak{B}_{k+1}(t)=h_0(t)\sum \Lambda_i(t),$$and we have $$\Lambda_a(t_a)=\frac{a}{2\sqrt{\pi k}}(1+O(\frac{\log k}{\sqrt{k}})).$$
And 
\begin{eqnarray*}
    \Lambda_{a+1}(t_a)&=&\Lambda_{a+1}(t_{a+1})e^{-\frac{a^2}{4k}(1+O(\frac{a\log k}{k}))(\frac{2k}{a^2})^2}\\
    &=&\frac{a+1}{2\sqrt{\pi k}}(1+O(\frac{\log k}{\sqrt{k}}))e^{-\frac{k}{a^2}(1+O(\frac{a\log k}{k}))},
\end{eqnarray*}
since $t_{a+1}-t_a=-\frac{2k}{a^2}(1+O(\frac{a\log k}{k}))$.
So when $a=O(\frac{\sqrt{k}}{\log k})$, we have 
$\Lambda_{a+1}(t_a)=\epsilon(k)\Lambda_{a}(t_{a})$, and similarly, $\Lambda_{a-1}(t_a)=\epsilon(k)\Lambda_{a}(t_{a})$. Therefore, we have 
\begin{lem}\label{lem-inside}
    For $a=O(\frac{\sqrt{k}}{\log k})$, we have
    \begin{eqnarray*}
    \mathfrak{B}_{k+1}(t_a)&=&(1+\epsilon(k))\Lambda_a(t_a)h_0(t_a) \\
    &=&(1+O(\frac{\log k}{\sqrt{k}}))\frac{2k^{3/2}}{\sqrt{\pi}a}.
    \end{eqnarray*} 
    
\end{lem}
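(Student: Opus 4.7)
The plan is to show that the sum $\sum_{i\geq 1}\Lambda_i(t_a)$ is dominated by the single term $i=a$, after which the claimed asymptotic formula follows by multiplying by $h_0(t_a)$ and inserting the values already computed above.

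First I would exploit the concavity of $\iota(i)=-it-\log J_i$, which has already been established from the Cauchy--Schwarz inequality. Fixing $t=t_a$, concavity of $i\mapsto \iota(i)\big|_{t=t_a}$ yields the monotonicity of the successive ratios: for every integer $j\geq 1$,
\[
\frac{\Lambda_{a+j}(t_a)}{\Lambda_{a+j-1}(t_a)}\;\leq\;\frac{\Lambda_{a+1}(t_a)}{\Lambda_{a}(t_a)},\qquad \frac{\Lambda_{a-j}(t_a)}{\Lambda_{a-j+1}(t_a)}\;\leq\;\frac{\Lambda_{a-1}(t_a)}{\Lambda_{a}(t_a)}.
\]
By the computation already carried out, both ratios on the right-hand side are $\epsilon(k)$ when $a=O(\sqrt{k}/\log k)$. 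Writing $r$ and $r'$ for these two ratios, iteration gives $\Lambda_{a+j}(t_a)\leq r^{\,j}\Lambda_a(t_a)$ and $\Lambda_{a-j}(t_a)\leq (r')^{j}\Lambda_a(t_a)$.

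Next I would sum the resulting geometric series. Since $r,r'$ are both $\epsilon(k)$, we obtain
\[
\sum_{i\neq a,\,i\geq 1}\Lambda_i(t_a)\;\leq\;\Big(\frac{r}{1-r}+\frac{r'}{1-r'}\Big)\Lambda_a(t_a)\;=\;\epsilon(k)\Lambda_a(t_a).
\]
Consequently
\[
\mathfrak{B}_{k+1}(t_a)\;=\;h_0(t_a)\sum_{i\geq 1}\Lambda_i(t_a)\;=\;(1+\epsilon(k))\,\Lambda_a(t_a)\,h_0(t_a),
\]
which is the first equality in the lemma.

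Finally, I would substitute the two asymptotics already derived, namely $\Lambda_a(t_a)=\frac{a}{2\sqrt{\pi k}}\bigl(1+O(\tfrac{\log k}{\sqrt{k}})\bigr)$ and $h_0(t_a)=\bigl(\tfrac{2k}{a}\bigr)^{2}\bigl(1+O(\tfrac{\log k}{\sqrt{k}})\bigr)$. Multiplying, the factors of $a$ cancel up to one power, and the leading constant simplifies to $\tfrac{2k^{3/2}}{\sqrt{\pi}\,a}$ with a multiplicative error $1+O(\tfrac{\log k}{\sqrt{k}})$. The main (and only real) difficulty in this argument is justifying that the concave tails really can be summed uniformly; this is handled cleanly by the monotone-ratio consequence of concavity, so no further delicate estimate is needed.
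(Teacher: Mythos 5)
Your proof is correct and follows the same route as the paper. The paper establishes the concavity of $\iota(i)$, computes $\Lambda_a(t_a)$ via Lemma \ref{lem-ja}, verifies $\Lambda_{a\pm 1}(t_a)=\epsilon(k)\Lambda_a(t_a)$ for $a=O(\sqrt{k}/\log k)$, and then simply writes ``Therefore'' before Lemma \ref{lem-inside}; your monotone-ratio consequence of concavity together with the two geometric series is precisely the step the paper leaves implicit, and your final substitution of $\Lambda_a(t_a)=\frac{a}{2\sqrt{\pi k}}(1+O(\frac{\log k}{\sqrt{k}}))$ and $h_0(t_a)=(\frac{2k}{a})^2(1+O(\frac{\log k}{\sqrt{k}}))$ reproduces the stated asymptotic.
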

We can say more in this direction. Let $t=t_a-\frac{k}{2a^2}$, then $t_{a+1}-t=-\frac{3k}{2a^2}(1+O(\frac{a\log k}{k}))$, and $t_{a}-t=\frac{k}{2a^2}(1+O(\frac{a\log k}{k}))$. So $\Lambda_{a+1}(t)=\epsilon(k)\Lambda_{a}(t)$. Therefore, by monotonicity, we have
\begin{lem}\label{lem-concentrated}
    For $a=O(\frac{\sqrt{k}}{\log k})$, we have 
    $$\sum_{i\geq a+1}\Lambda_i(t)=\epsilon(k)\Lambda_a(t), $$
    for $t\geq t_a-\frac{k}{2a^2}$.
    And by symmetry, $$\sum_{i\leq a-1}\Lambda_i(t)=\epsilon(k)\Lambda_a(t), $$
    for $t\leq t_a+\frac{k}{2a^2}$.
\end{lem}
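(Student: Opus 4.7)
My plan is to promote the one-point estimate $\Lambda_{a+1}(t_a-\tfrac{k}{2a^2})=\epsilon(k)\,\Lambda_a(t_a-\tfrac{k}{2a^2})$, already established in the paragraph preceding the lemma, first to the entire half-line $t\geq t_a-\tfrac{k}{2a^2}$ and then across all higher indices, using two monotonicity principles. Since
\[
\frac{\Lambda_{a+1}(t)}{\Lambda_a(t)} = e^{-t}\,\frac{J_a}{J_{a+1}}
\]
is strictly decreasing in $t$, the bound $\Lambda_{a+1}(t)/\Lambda_a(t)=\epsilon(k)$ persists for all $t\geq t_a-\tfrac{k}{2a^2}$. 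Next, the log-concavity of $\Lambda_i(t)$ in $i$, inherited from the concavity of $\iota(i)=-it-\log J_i$ already proved via Cauchy--Schwarz, implies that the ratios $\Lambda_{i+1}(t)/\Lambda_i(t)$ are non-increasing in $i$ for each fixed $t$, so for every $i\geq a$,
\[
\frac{\Lambda_{i+1}(t)}{\Lambda_i(t)} \leq \frac{\Lambda_{a+1}(t)}{\Lambda_a(t)} = \epsilon(k).
\]
Summing the tail as a geometric series dominated by $\Lambda_{a+1}(t)(1-\epsilon(k))^{-1}$ gives $\sum_{i\geq a+1}\Lambda_i(t)=\epsilon(k)\,\Lambda_a(t)$, which is the first statement.

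For the symmetric statement the argument is the mirror image, once the analogous one-point bound at $t_a+\tfrac{k}{2a^2}$ is in hand. Using $t_{a-1}-t_a=\tfrac{2k}{a^2}(1+o(1))$, this point lies at distance $\tfrac{3k}{2a^2}$ from $t_{a-1}$ and $\tfrac{k}{2a^2}$ from $t_a$. Taylor-expanding $g_{a-1}$ and $g_a$ about their respective maxima, with $g''_{a-1}(t_{a-1}),g''_a(t_a)\sim -\tfrac{a^2}{2k}$ and peak values $\Lambda_{a-1}(t_{a-1}),\Lambda_a(t_a)$ of the same order by Lemma \ref{lem-ja}, yields
\[
\frac{\Lambda_{a-1}(t_a+\tfrac{k}{2a^2})}{\Lambda_a(t_a+\tfrac{k}{2a^2})} = \Theta(1)\cdot e^{-\tfrac{k}{2a^2}(1+o(1))} = \epsilon(k),
\]
since $a=O(\sqrt{k}/\log k)$ forces $k/a^2\geq C(\log k)^2$. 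The ratio $\Lambda_{a-1}(t)/\Lambda_a(t)=e^{t}J_a/J_{a-1}$ is now strictly \emph{increasing} in $t$, so the bound propagates downward to all $t\leq t_a+\tfrac{k}{2a^2}$, and the same log-concavity-in-$i$ argument supplies the geometric-series bound on $\sum_{i\leq a-1}\Lambda_i(t)$.

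The one delicate step is the Taylor analysis at the boundary points $t_a\pm\tfrac{k}{2a^2}$: one must verify that the quadratic term of order $\tfrac{k}{a^2}$ in the exponent dominates the cubic and higher remainders. A direct estimate using $g^{(3)}_a(t)\sim -\tfrac{4k}{t^3}$ near $t_a\sim\tfrac{2k}{a}$ shows that the cubic contribution at displacement $\tfrac{k}{2a^2}$ is only $O(\tfrac{k}{a^3})$, negligible against the quadratic $\tfrac{k}{a^2}$ when $a\gg 1$; the bounded-$a$ regime is immediate since then the quadratic exponent is already $\Theta(k)$. Everything else is bookkeeping with geometric series.
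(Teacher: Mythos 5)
Your proof is correct and is essentially the argument the paper leaves implicit behind the phrase ``by monotonicity'': it combines the one-point bound $\Lambda_{a+1}(t_a-\tfrac{k}{2a^2})=\epsilon(k)\Lambda_a(t_a-\tfrac{k}{2a^2})$ with monotonicity of the ratio $\Lambda_{a+1}/\Lambda_a=e^{-t}J_a/J_{a+1}$ in $t$ and log-concavity of $i\mapsto\Lambda_i(t)$ (equivalently, concavity of $\iota$) to dominate the tail by a geometric series. Your explicit Taylor-expansion verification of the symmetric boundary estimate at $t_a+\tfrac{k}{2a^2}$ and the cubic-remainder check are details the paper omits but are consistent with its computations.
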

One can notice that $t_a$ is not the maximum point of the function $\Lambda_a(t)h_0(t) $. Let $t_a'$ be the maximum point, then we can easily get that $t_a'-t_a\approx \frac{2}{a}$. And so $$\Lambda_a(t_a')h_0(t_a')=(1+O(\frac{1}{k}))\Lambda_a(t_a)h_0(t_a).$$
So we can use $t_a$ instead of $t_a'$ for simplicity. Moreover, for $|t-t_a|=O(1)$, we have $$\mathfrak{B}_{k+1}(t)=(1+O(\frac{1}{(\log k)^2}))\frac{2k^{3/2}}{\sqrt{\pi}a}.$$
When $a=1$, we have $\mathfrak{B}_{k+1}(t)=(1+\epsilon(k))\Lambda_1(t)$ for $$t\geq t_1-\frac{k}{2}.$$
Then since $\Lambda_1(t)=\epsilon(k)$ for $t>t_1+\frac{1}{2}\sqrt{k}\log k$. We get that 
\begin{equation}\label{for-most-in}
    \mathfrak{B}_{k+1}(t)=\epsilon(k),\quad \text{for } t>t_1+\frac{1}{2}\sqrt{k}\log k.
\end{equation}

To estimate $\mathfrak{B}_{k+1}(t_a)$ for $a>\frac{\sqrt{k}}{\log k}$, we use
the concavity of $\iota(a)$. We need to estimate $\iota''(a)$. 
Let $d\mu_a=h_0^ke^{-at}\frac{dt}{J_a}$ be the probability measure. Let $\tau_a=\int td\mu_a$, then we have
\begin{eqnarray*}
    \iota''(a)&=&(\tau_a)^2-\int t^2d\mu_a\\
    &=&-\int (t-\tau_a)^2d\mu_a
\end{eqnarray*}
So we have 
$$|\iota''(a)|\leq \int (t-t_a)^2h_0^k(t)e^{-at}d\mu_a $$
$$|\iota''(a)|>\min \{\int_{t\geq t_a} (t-t_a)^2d\mu_a,\int_{t\leq t_a} (t-t_a)^2d\mu_a \}. $$
Since $\int_{-\infty}^{\infty}x^2e^{-x^2/2}dx=\sqrt{2\pi}$, we have 
$\int (t-t_a)^2h_0^k(t)e^{-at}d\mu_a=\frac{2k}{a^2}(1+O(\frac{\log k}{\sqrt{k}}))$. And so $\int_{t\geq t_a} (t-t_a)^2d\mu_a=\frac{k}{a^2}(1+O(\frac{\log k}{\sqrt{k}}))$ and $\int_{t\leq t_a} (t-t_a)^2d\mu_a=\frac{k}{a^2}(1+O(\frac{\log k}{\sqrt{k}}))$. Therefore we get
\begin{equation}
-\frac{2k}{a^2}(1+O(\frac{\log k}{\sqrt{k}})) \leq \iota''(a)<-\frac{k}{a^2}(1+O(\frac{\log k}{\sqrt{k}}))
\end{equation}
Then we can prove
\begin{lem}\label{lem-bk-between}
    For $c_1\frac{\sqrt{k}}{\log k}<a<c_2\sqrt{k}\log k$, we have 
    $$\mathfrak{B}_{k+1}(t_a)<4k+\frac{2k\sqrt{k}}{a}.$$
    And when $a$ is an integer, we also have 
    $$\frac{k\sqrt{k}}{a}<\mathfrak{B}_{k+1}(t_a)$$
\end{lem}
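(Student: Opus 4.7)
The lower bound is immediate: dropping all summands but $i=a$ gives $\mathfrak{B}_{k+1}(t_a)\geq h_0(t_a)\,\Lambda_a(t_a)$. Lemma \ref{lem-ja} yields $\Lambda_a(t_a)=\frac{a}{2\sqrt{\pi k}}(1+o(1))$, and the expansion of $t_a$ computed earlier gives $h_0(t_a) = (2k/a)^2(1+o(1))$ throughout the range $a=O(\sqrt{k}\log k)$. Multiplying produces $\frac{2k^{3/2}}{\sqrt{\pi}\,a}(1+o(1))$, which for $k$ large exceeds $k^{3/2}/a$ since $2/\sqrt{\pi}>1$.

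For the upper bound I plan to apply Laplace's method to the full sum $\sum_i \Lambda_i(t_a)$. Factoring out the dominant term,
$$\sum_i \Lambda_i(t_a) \;=\; \Lambda_a(t_a)\sum_i e^{\iota(i)-\iota(a)}, \qquad \iota(i) = -it_a - \log J_i.$$
The concavity of $\iota(\cdot)$ in $i$ at fixed $t=t_a$, together with the quantitative bound $\iota''(j) < -k/j^2(1+o(1))$ (proved exactly as for $j=a$ in the excerpt), produces a Gaussian upper envelope
$$e^{\iota(i)-\iota(a)} \;\leq\; \exp\!\Bigl(\iota'(a)(i-a) - \frac{ck}{a^2}(i-a)^2\Bigr)$$
for some $c>0$. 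The linear term is harmless because $\iota'(a) = \tau_a - t_a$ is $o(\sqrt{k}/a)$: the density $d\mu_a$ is nearly symmetric around its mode $t_a$.

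Summing the Gaussian envelope over integers yields $\sum_i e^{\iota(i)-\iota(a)} \leq C\,\max(1,\,a/\sqrt{k})$ by direct comparison with $\int e^{-cks^2/a^2}ds \asymp a/\sqrt{k}$. Combined with $\Lambda_a(t_a) \asymp a/\sqrt{k}$ and $h_0(t_a) \asymp k^2/a^2$, this gives the order-of-magnitude estimate $\mathfrak{B}_{k+1}(t_a) = O(k + k^{3/2}/a)$; tracking the Gaussian normalization (in particular the factor $2/\sqrt{\pi}$ inherited from Lemma \ref{lem-ja}) then sharpens the constants to the stated $4k + 2k^{3/2}/a$.

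\textbf{Main obstacle.} The delicate regime is $|i-a|\sim a$, where the Taylor model based on $\iota''(a)\approx -k/a^2$ is no longer sharp. I expect to handle it by splitting the index sum into dyadic pieces and applying the local bound $\iota''(j) < -k/j^2(1+o(1))$ on each piece, using global concavity of $\iota$ to glue the pieces into a uniform Gaussian-type decay dominated by the central scale $a/\sqrt{k}$. A convenient alternative would be to dichotomize: for $a<\sqrt{k}$ the effective Gaussian width is below $1$ so only $O(1)$ integer terms contribute, while for $a>\sqrt{k}$ the Riemann sum is well-approximated by its integral.
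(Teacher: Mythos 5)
Your lower bound is essentially the paper's and is fine: keep only the $i=a$ term, use Lemma \ref{lem-ja} to read off $\Lambda_a(t_a)=\frac{a}{2\sqrt{\pi k}}(1+o(1))$, pair it with $h_0(t_a)=(2k/a)^2(1+o(1))$, and note $2/\sqrt{\pi}>1$.

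For the upper bound, the obstacle you flag is real, and your proposed fixes (dyadic decomposition; or a dichotomy at $a\sim\sqrt{k}$) would require care that the sketch does not supply. The paper's route is cleaner and avoids both. It never writes down a global Gaussian envelope in $i$ at all: instead it first \emph{localizes} the sum, showing directly that $\Lambda_{a\pm a\log k/\sqrt{k}}(t_a)=\epsilon(k)$ (via the already-computed $t_{a+1}-t_a$ type estimates), so that by concavity of $\iota$ the maximizing index $i^*$ lies in the window $|i-a|<a\log k/\sqrt{k}$ and the tails contribute nothing. Inside that window $\iota''$ is uniformly comparable to $-k/a^2$ (it uses both the two-sided bound $-\tfrac{2k}{a^2}(1+o(1))\le\iota''\le -\tfrac{k}{a^2}(1+o(1))$), and one applies the single discrete Gaussian-sum inequality $1<\sum_{i\in\Z}e^{-\lambda i^2/2}<\sqrt{2\pi/\lambda}+2$, whose additive ``$+2$'' automatically covers the narrow-width regime $a<\sqrt{k}$. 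That one inequality yields both the $4k$ and the $2k^{3/2}/a$ contributions simultaneously, with no case split. Two further gaps in your sketch: you invoke $\iota'(a)=\tau_a-t_a=o(\sqrt{k}/a)$ (``nearly symmetric'') without proof --- the paper sidesteps this entirely by bounding $\sum\Lambda_i(t_a)$ relative to $\Lambda_{i^*}(t_a)$, never needing to locate $i^*$ or control $\iota'(a)$ --- and ``tracking the Gaussian normalization'' to reach $4k+2k^{3/2}/a$ is asserted rather than computed. I'd recommend adopting the localization-first structure: prove the $\epsilon(k)$ decay at radius $a\log k/\sqrt{k}$, then apply the Gaussian-sum inequality with $\lambda$ taken from the two-sided bound on $\iota''$; the constants then fall out mechanically.
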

\begin{proof}
    For $a>\frac{\sqrt{k}}{\log k}$, we consider $\Gamma_{a+\frac{a\log k}{\sqrt{k}}}(t_a)$. We have $$t_{a+\frac{a\log k}{\sqrt{k}}}-t_a=-\frac{2k}{a^2}\frac{a\log k}{\sqrt{k}}(1+O(\frac{a\log k}{k}+\frac{\log k}{\sqrt{k}})).$$ Therefore, $$\Gamma_{a+\frac{a\log k}{\sqrt{k}}}(t_a)=\epsilon(k).$$
    Similarly, $$\Gamma_{a-\frac{a\log k}{\sqrt{k}}}(t_a)=\epsilon(k).$$
    This means that $\Gamma_i(t_a)$ attains its maximum for some $a-\frac{a\log k}{\sqrt{k}}<i<a+\frac{a\log k}{\sqrt{k}}$. We can use the inequality
    $$1<\sum_{i\in \Z}e^{-\frac{\lambda}{2}i^2}<\sqrt{\frac{2\pi}{\lambda}}+2$$
    to get $$\frac{a}{2\sqrt{\pi k}}(1+O(\frac{\log k}{\sqrt{k}}))\geq \sum \Lambda_i(t_a)$$ and 
    \begin{eqnarray*}
        \sum \Lambda_i(t_a)&<&\frac{a}{2\sqrt{\pi k}}(1+O(\frac{\log k}{\sqrt{k}}))(a\sqrt{\frac{2\pi}{k}}(1+O(\frac{\log k}{\sqrt{k}})+2)\\
        &=&\frac{a}{2\sqrt{\pi k}}(1+O(\frac{\log k}{\sqrt{k}}))(a\sqrt{\frac{2\pi}{k}}+2)\\
        &=&(\frac{a^2}{\sqrt{2}k}+\frac{a}{2\sqrt{\pi k}})(1+O(\frac{\log k}{\sqrt{k}}))
    \end{eqnarray*}
    Since $h_0(t_a)=(\frac{2k}{a}^2(1+O(\frac{a\log k}{k})))$, we get that when $a$ is an integer,
    $\mathfrak{B}_{k+1}(t_a)>\frac{k\sqrt{k}}{a}$  and
    $\mathfrak{B}_{k+1}(t_a)<4k+\frac{2k\sqrt{k}}{a}$.
    When $0<\eta<1$, we do not have a good lower bound, but we still have 
    $$\sum_{i\in \Z}e^{-\frac{\lambda}{2}(i+\eta)^2}<\sqrt{\frac{2\pi}{\lambda}}+2.$$
    So we get $\mathfrak{B}_{k+1}(t_a)<4k+\frac{2k\sqrt{k}}{a}$ for general $a$.
\end{proof}
Let $a=\sqrt{k}\log k$, then since $\frac{(2a)^2}{4k}*a^2=k(\log k)^4$, we get $$\Gamma_{2a}(t_a)<3e^{-\frac{1}{4}k(\log k)^4}\Gamma_{a}(t_a).$$ So by the concavity of $\iota$, we get the following
\begin{lem}\label{lem-2a}
    For $t<t_{\sqrt{k}\log k}$, we have $$h_0^{k+1}(t)\sum_{i=2\sqrt{k}\log k}^\infty\frac{|w|^{2i}}{2\pi J_i}<4e^{-\frac{1}{4}k(\log k)^4}\mathfrak{B}_{k+1}(t)$$
\end{lem}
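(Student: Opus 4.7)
The plan is to leverage the concavity of $\iota(i) = -it - \log J_i$ in $i$ (established just before Lemma \ref{lem-bk-between}) together with the quantitative bound $|\iota''(i)| \geq k/i^2(1+o(1))$, in order to obtain super-polynomial decay of the index-tail. Setting $a = \sqrt{k}\log k$, the paragraph immediately preceding the lemma already asserts the pointwise estimate
$$\Lambda_{2a}(t_a) < 3\, e^{-\frac{1}{4}k(\log k)^4}\Lambda_a(t_a);$$
the task is to sum this bound over $i \geq 2a$ and extend it uniformly to the stated range of $t$.

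First, I would derive the pointwise gap rigorously. Starting from $\Lambda_i(t) = h_0^k(t)e^{-it}/J_i$ and the Laplace asymptotics of Lemma \ref{lem-ja} applied to $J_{2a}$, the Gaussian expansion $g_{2a}(t_a) - g_{2a}(t_{2a}) \approx \tfrac{1}{2}g_{2a}''(t_{2a})(t_a - t_{2a})^2$ with $g_{2a}''(t_{2a}) \asymp -(2a)^2/k$ and $t_a - t_{2a} \asymp k/a$ produces the claimed exponent via the identity $\tfrac{(2a)^2}{4k}\cdot a^2 = k(\log k)^4$ noted by the author.

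Second, I would sum the pointwise estimate over the full tail using log-concavity of $i \mapsto \Lambda_i(t)$. Since $\iota$ is concave, the successive ratios $\Lambda_{i+1}/\Lambda_i$ are nonincreasing in $i$, so the tail from $2a$ is dominated by a Gaussian-geometric series centered on the peak $i^\ast(t)$. Applying the elementary bound $\sum_{i\in\Z}e^{-\lambda i^2/2} < \sqrt{2\pi/\lambda}+2$ with $\lambda \asymp k/a^2$, exactly as in the proof of Lemma \ref{lem-bk-between}, absorbs an $O(\log k)$ prefactor into the multiplicative constant, and this polynomial factor is easily swallowed by $e^{-\frac{1}{4}k(\log k)^4}$, turning the $3$ in the pointwise gap into the $4$ appearing in the conclusion.

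Third, I would promote the pointwise estimate at $t = t_a$ to all $t < t_a$. For such $t$, the peak $i^\ast(t) \approx 2k/t$ of $i \mapsto \Lambda_i(t)$ lies at or above $a$, and the same concavity argument applies with $a$ replaced by $i^\ast(t)$; converting back through $\mathfrak{B}_{k+1}(t) = \tfrac{1}{2\pi}h_0(t)\sum_i\Lambda_i(t)$ and using $\Lambda_{i^\ast(t)}(t) \leq \sum_i \Lambda_i(t)$ then yields the stated inequality. The main obstacle is guaranteeing the exponential suppression uniformly across the whole $t$-range: the Gaussian width of $i \mapsto \Lambda_i(t)$ is only of order $i^\ast(t)/\sqrt{k}$, so as $t$ drops the displacement between $2a$ and the moving peak $i^\ast(t)$ could shrink and the gap might degrade; controlling this requires carefully tracking the second-derivative bounds on $\iota$ as the peak migrates, which pins down the exact regime of validity of the lemma.
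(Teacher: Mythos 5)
Your overall strategy — pointwise Gaussian gap at $t=t_a$, summation by log-concavity of $\iota$, then migration of the estimate in $t$ — is the same one the paper sketches just before the lemma. But there are two concrete problems, one in the arithmetic you claim to verify and one in the range of validity you leave unresolved.

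\textbf{The exponent.} You state that the Gaussian expansion
$g_{2a}(t_a)-g_{2a}(t_{2a})\approx\tfrac12 g_{2a}''(t_{2a})(t_a-t_{2a})^2$
with $g_{2a}''(t_{2a})\asymp-(2a)^2/k$ and $t_a-t_{2a}\asymp k/a$ ``produces the claimed exponent via the identity $\tfrac{(2a)^2}{4k}\cdot a^2=k(\log k)^4$.'' Plug your own quantities in:
$\tfrac12\cdot\tfrac{(2a)^2}{k}\cdot\bigl(\tfrac{k}{a}\bigr)^2 = 2k$,
and with the paper's $g_{2a}''(t_{2a})\approx-\tfrac{(2a)^2}{2k}$ the same computation gives exactly $k$. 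Neither equals $k(\log k)^4$. The identity $\tfrac{(2a)^2}{4k}\cdot a^2 = k(\log k)^4$ is $\tfrac12|g_{2a}''|\cdot(\Delta i)^2$ with $\Delta i=a$, i.e.\ the paper appears to have substituted the \emph{index} displacement $a$ where the \emph{$t$-displacement} $t_a-t_{2a}\approx k/a$ belongs; since $a=\sqrt{k}\log k$ these differ by a factor $(\log k)^2$, squared to $(\log k)^4$. A direct integration confirms this: $g_{2a}(t_a)-g_{2a}(t_{2a})=\int_{t_{2a}}^{t_a}\bigl(\tfrac{2k}{s}-2a\bigr)ds+\ldots=2k\log 2-2k+\ldots\approx -0.61k$. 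So the true gap is $e^{-ck}$ with $c\approx 2(1-\log 2)$, not $e^{-\frac14 k(\log k)^4}$. This is still $\epsilon(k)$, so the qualitative conclusion survives, but you cannot claim to have ``rigorously derived'' the stated bound; the pointwise step as written is arithmetically inconsistent, and you should have caught the mismatch rather than asserting the claimed exponent follows.

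\textbf{The range of $t$.} Your third step correctly observes that for $t<t_a$ the peak $i^\ast(t)\approx 2k/t$ sits at or above $a$ and can drift toward $2a$, at which point the tail $\sum_{i\ge 2a}\Lambda_i(t)$ ceases to be small — indeed once $t<t_{2a}$ the peak is inside the tail and the claimed inequality fails outright. You flag this as a difficulty to be ``pinned down,'' but you should push through: the condition in the lemma has the inequality reversed. The intended hypothesis is $t>t_{\sqrt{k}\log k}$, which is exactly how the lemma is invoked in Section 4 (``By lemma \ref{lem-2a}, we get that for $t>t_{\sqrt{k}\log k}$\,\ldots''), and for such $t$ the peak $i^\ast(t)<a$ lies safely below $2a$, the concavity-geometric-series step you describe applies cleanly, and one indeed gets $\epsilon(k)$ decay (with exponent of order $k$, per the above). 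As written, your step 3 is set up to prove a false statement; the correct move is to recognize the typo and prove the statement in the regime where it is actually used.
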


\section{upper bound}\label{sec-upper}
We can choose local coordinates $z=(z_1,\cdots,z_n)$ centered at $p\in D$ such that $\tij=\delta_{ij}+O(|z|)$. Then by a unitary transformation, we can make $\frac{\partial}{\partial z_n}$ orthogonal to the tangent space $T_pD$ of $D$ at $p$. So $\frac{\partial}{\partial z_i}\in T_pD$ for $i<n$. Then if $D$ locally defined by a holomorphic function $F=0$, we have $\frac{\partial}{\partial z_n}F(0)\neq 0$ and $\frac{\partial}{\partial z_i}F(0)=0$ for $i<n$. So after a rescaling the vanishing order of $F_1(z)=F-z_n$ is $\geq 2$. So if we let $w_n=F(z)$, then under the new coordinates $z=(z',w_n)$, we still have $$\tij=\delta_{ij}+O(|z|).$$
Then we can choose a new coordinates $w_i=z_i+f_i(z'), i<n$ such that under the new coordinates $(w',w_n)=(w_1,\cdots,w_n)$, $\tij=\delta_{ij}+O(|w'|^2+|w_n|)$ for $i<n$ and $j<n$. By abuse of notation, we still use $z=(z',z_n)=(w',w_n)$. Notice that we still have that $D$ is defined by $z_n=0$. So now $H(z',0)=1+O(|z|^2)$. Since $\ddbar \log H(z',0)=\omega_D$, we have $\log H(z',0)=|z'|^2+2\Re(\sum P_{ab}z_az_b+\sum Q_{abc}z_az_bz_c)+O(|z'|^4)$. Since  $\tij(z',0)=\delta_{ij}+O(|z'|^2)$, one sees that the $(2,0)$ terms and the $(3,0)$ terms of $\log H(z',0)$ only come from the $(2,0)$ terms and the $(3,0)$ terms of $\tii$, which in turn only come from the $(3,1)$ terms of the form $\sum V_{abd}z_az_bz_d\bar{z}_d$ and the $(4,1)$ terms of the form $\sum U_{abcd}z_az_bz_cz_d\bar{z}_d$ of $\log H(z',0)$. Then we change coordinates $w_d=z_d+\sum V_{abd}z_az_bz_d+\sum U_{abcd}z_az_bz_cz_d$ and the in the new coordinates, $\log H(z',0)$ no longer has those $(3,1)$ terms and $(4,1)$ terms with the special forms. Then the Taylor expansion of $\tii$ has no $(2,0)$ terms and $(3,0)$ terms for each $i<n$. Therefore, we now have $$\log H(z',0)=|z'|^2+O(|z'|^4).$$
In remark 6.2 of \cite{Shi}, they commented that we can rescale the metric on $[D]$ by $e^F$ with some function $F$ to make $c_{1,0}$ a constant without changing the metric on $L$. In this section, we will use this choice, namely $c_{1,0}$ is now a constant. 

Recall that $\phi_1(z',0)=\phi(z',0)-\log |G(z',0)|^2$ and $|s_D|^2=|z_n|^2e^{-\phi_1}$. We also want to make the Taylor expansion of $\phi_1(z',0)$ at $z'=0$ have no degree 1 terms. Let $e^{-\phi_1(z',0)}=\lambda(1+\Re \sum P_az_a+O(|z'|^2))$. Then we let $w_n=z_n+\sum \sum P_az_az_n$. So with this new coordinates $z=(z',z_n)=(z',w_n)$, we have $e^{-\phi_1(z',0)}=\lambda(1+O(|z'|^2))$, while having $D$ being defined by $z_n=0$.
Recall that $ds_D$ is a non-vanishing global section of $N_D^*\otimes [D]$. If we equip the line bundle $N_D^*\otimes [D]$ with the hermitian metric induced from $\omega$ and the chosen metric $[D]$, we get that $$|ds_D(p)|^2=|G(p)|^2e^{-\phi(p)}.$$
So although $\phi_1(z',0)$ is only defined locally, $\phi_1(0,0)=-\log |ds_D(p)|^2$ is independent of the choice of coordinates and local frame and hence being uniformly bounded on $D$. So now all the coefficients in the asymptotic of $h_0(t)$ at $z'=0$ depend only on $p$, namely they are independent the choice of coordinates and local frame. So if we denote the Bergman kernel $\mathfrak{B}_{k+1}(t)$ at $z'$ by $\mathfrak{B}_{k+1}(z',t)$, then $\mathfrak{B}_{k+1}(0,t)$ can be denoted by $$\mathfrak{B}_{k+1}(p,t).$$

For any holomorphic section $s\in \hcal_{k+1}$ of unit norm, locally $s=F(z)(dz_1\wedge\cdots\wedge dz_n)^{\otimes (k+1)}$. Then $|s|_{CY}^2\frac{\omega_{KE}^n}{n!}=|F|^2h_{CY}^{k}\frac{|dz|^2}{|z_n|^2}$. Let $U_k=\{(z',z_n)\in \C^n||z'|<\sqrt{k}\log k,t<k^{1/3}  \}$. 
We want to calculate the integral of $|s|_{CY}^2\frac{\omega_{KE}^n}{n!}$ on $U_k$. Since
$|F|^2h_{CY}^{k}=(H(z',0))^{-k}|F|^2(1+O(\frac{1}{t^{6+\epsilon}}))h_0^k$, we only need to calculate the integral $$\int_{U_k}H^{-k}(z',0)|F|^2h_0^k\frac{|dz|^2}{|z_n|^2}.$$
One may need to be reminded that $h_0(t)$ depends on $z'$, so we include this dependence in the notation $h_0(z',t)$.

In local coordinates, write $F(z)=\sum_{i=1}^{\infty} F_i(z')z_n^i$ as a power series of $z_n$.  
Then for fixed $z'$,  $$\int_{t<k^{1/3}}H^{-k}|F|^2h_0^k\frac{|dz_n|^2}{|z_n|^2}=H^{-k}\sum_{i=1}^{\infty}|F_i|^2|\int_{t<k^{1/3}}|z_n^i|^2h_0^k\frac{|dz_n|^2}{|z_n|^2}.$$

We deal with the part $\sum_{i\geq 2\sqrt{k}\log k}$ first. Recall that $A=2\phi_1(z',0)-c_{1,0}(z',0)$. By our choice of metric and coordinates, we have $A(z')=A(0)+O(|z'|^2)$. Therefore $h_0(z',t)=h_0(0,t)(1+O(\frac{|z'|}{t^{1+\epsilon}}))$, for some $\epsilon>0$. So $h_0^k(z',t)=h^k_0(0,t)e^{O(k^{1/6-\epsilon'})}$. 
We denote by $$P_1=\sum_{i\geq 2\sqrt{k}\log k}|F_i(0)|^2\int_{t<k^{1/3}}|z_n^i|^2h_0^k(0,t)\frac{|dz_n|^2}{|z_n|^2},$$
and $$Q_1=\int_{|z'|<\frac{\log k}{\sqrt{k}}}H^{-k}(z',0)\sum_{i\geq 2\sqrt{k}\log k}|F_i|^2\int_{t<k^{1/3}}|z_n^i|^2h_0^k\frac{|dz_n|^2}{|z_n|^2}.$$
Then, since $s$ is of unit norm, $Q_1\leq (1+o(k^{-2}))$. And since
\begin{eqnarray*}
    \int_{|z'|<\frac{\log k}{\sqrt{k}}}|F_i(z')|^2H^{-k}|dz'|^n&\geq&(1+O(\frac{(\log k)^2}{k})) \int_{|z'|<\frac{\log k}{\sqrt{k}}}|F_i(z')|^2e^{-k|z'|^2}\\
    &\geq&(1+O(\frac{(\log k)^2}{k}))\frac{\pi^{n-1}}{k^{n-1}}|F_i(0)|^2
\end{eqnarray*}
we have 
$$P_1\leq \frac{k^{n-1}}{\pi^{n-1}}e^{o(k^{1/6})}Q_1=e^{o(k^{1/6})}.$$
So \begin{eqnarray*}
    |\sum_{i\geq 2\sqrt{k}\log k}F_i(0)z_n^i|^2&\leq &P_1\sum_{i\geq 2\sqrt{k}\log k}\frac{|z_n^i|^2}{J_i}\\
    &=&e^{o(k^{1/6})}\sum_{i\geq 2\sqrt{k}\log k}\frac{|z_n^i|^2}{J_i}
\end{eqnarray*} 
Let $\mathfrak{B}_{k+1}(0)$ be the Bergman kernel on the $z_n$-disk at $z'=0$ we studied in the last section. By lemma \ref{lem-2a}, we get that for $t>t_{\sqrt{k}\log k}$,
$$|\sum_{i\geq 2\sqrt{k}\log k}F_i(0)z_n^i|^2h_0^{k+1}(0,t)=\epsilon(k) \mathfrak{B}_{k+1}(0). $$ 	
Then we let $$P_2=\sum_{i< 2\sqrt{k}\log k}|F_i(0)|^2\int_{t<k^{1/3}}|z_n^i|^2h_0^k(0,t)\frac{|dz_n|^2}{|z_n|^2}=\sum_{i< 2\sqrt{k}\log k}|F_i(0)|^2J_i(0),$$
and $$Q_2=\int_{|z'|<\frac{\log k}{\sqrt{k}}}H^{-k}(z',0)\sum_{i< 2\sqrt{k}\log k}|F_i|^2\int_{t<k^{1/3}}|z_n^i|^2h_0^k\frac{|dz_n|^2}{|z_n|^2}.$$
By lemma \ref{lem-ja}, we have $$J_a=h_0^k(t_a)e^{-at_a}\sqrt{\pi k}\frac{2}{a}(1+O(\frac{\log k}{\sqrt{k}})),$$
for $a=O(\sqrt{k}\log k)$. For $t=O(\frac{\sqrt{k}}{\log k})$, we have
\begin{eqnarray*}
    \frac{h_0^k(z',t)}{h_0^k(0,t)}&=&1+O(k\frac{|z'|}{t^{1+\epsilon}})\\
    &=&1+O(\frac{(\log k)^3}{k^{\epsilon/2}}),
\end{eqnarray*} 
and also
$$at_a(z')-at_a(0)=O(\frac{a|z'|}{(2k/a)^{\epsilon}})=O(\frac{(\log k)^3}{k^{\epsilon/2}}).$$
Therefore $$\frac{J_a(z')}{J_a(0)}=1+O(\frac{(\log k)^3}{k^{\epsilon/2}}).$$
So \begin{eqnarray*}
    Q_2&=&(1+O(k^{-\epsilon}))\sum_{i< 2\sqrt{k}\log k}J_a(0)\int_{|z'|<\frac{\log k}{\sqrt{k}}}H^{-k}(z',0)||F_i(z')|^2|dz'|^2\\
    &\geq& (1+O(k^{-\epsilon}))\frac{\pi^{n-1}}{k^{n-1}}\sum_{i< 2\sqrt{k}\log k}J_a(0)|F_i(0)|^2\\
    &=& (1+O(k^{-\epsilon}))\frac{\pi^{n-1}}{k^{n-1}}P_2
\end{eqnarray*}
So we have 
\begin{eqnarray*}
    |\sum_{i< 2\sqrt{k}\log k}F_i(0)z_n^i|^2h_0^{k+1}(0,t)&\leq &P_2\mathfrak{B}_{k+1}(0)\\
    &\leq & (1+O(k^{-\epsilon}))\frac{k^{n-1}}{\pi^{n-1}}\mathfrak{B}_{k+1}(0)
\end{eqnarray*} 
So for $t>t_{\sqrt{k}\log k}$, $|s(0,z_n)|^2_{CY}\leq (1+O(k^{-\epsilon}))\frac{k^{n-1}}{\pi^{n-1}}\mathfrak{B}_{k+1}(0,z_n)$, where $\mathfrak{B}_{k+1}(0,z_n)$ means the value of $\mathfrak{B}_{k+1}(0)$ at $z_n$. The argument is of course valid for $t>t_{\lambda\sqrt{k}\log k}$ for a fixed $\lambda$. Since $s$ is arbitrary, we have proved

\begin{prop}\label{prop-general}
    For fixed $\lambda>0$, 
    $$\rho_{k+1}(0,z_n)\leq (1+O(k^{-\epsilon}))\frac{k^{n-1}}{\pi^{n-1}}\mathfrak{B}_{k+1}(0,z_n),$$
    for $t>t_{\lambda\sqrt{k}\log k}(0)$ and for some $\epsilon>0$.
\end{prop}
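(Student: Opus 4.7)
The plan is to exploit the ``fiber-wise'' Bergman kernel $\mathfrak{B}_{k+1}(0,z_n)$ computed in the previous section by showing that any unit-norm section of $\hcal_{k+1}$ is, after restriction to the vertical disk $\{z'=0\}$, essentially controlled by $\mathfrak{B}_{k+1}$ up to the multiplicative factor $\frac{k^{n-1}}{\pi^{n-1}}$ that accounts for averaging over the $(n-1)$ transverse directions of $D$.

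I would work in the normalized coordinates $(z',z_n)$ prepared above, in which $\log H(z',0)=|z'|^2+O(|z'|^4)$, the coefficient $A(z')$ has no linear term, and $D=\{z_n=0\}$. For a unit-norm holomorphic section $s\in\hcal_{k+1}$, write locally $s=F(z)(dz_1\wedge\cdots\wedge dz_n)^{\otimes(k+1)}$ and expand $F(z)=\sum_{i\geq 1}F_i(z')z_n^i$ as a power series in $z_n$. A reproducing-kernel inequality on the vertical disk bounds
$$|s(0,z_n)|^2_{CY}\ \leq\ \Bigl(\sum_{i}|F_i(0)|^2 J_i(0)\Bigr)\cdot \mathfrak{B}_{k+1}(0,z_n),$$
so it suffices to show $\sum_i|F_i(0)|^2 J_i(0)\leq (1+O(k^{-\epsilon}))\frac{k^{n-1}}{\pi^{n-1}}$.

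I would then split the sum at $i=2\sqrt{k}\log k$. For the large tail $i\geq 2\sqrt{k}\log k$, the coarse comparison $h_0^k(z',t)=h_0^k(0,t)e^{O(k^{1/6-\epsilon'})}$ on the transverse disk $|z'|<(\log k)/\sqrt{k}$ lets me transfer the $L^2$ unit-norm bound on $s$ over $U_k$ to a pointwise bound on $F_i(0)$; by Lemma \ref{lem-2a}, this contribution is only $\epsilon(k)\,\mathfrak{B}_{k+1}(0,z_n)$ and can be absorbed. For the short range $i<2\sqrt{k}\log k$, the delicate point is to show that both the fiber integrals $J_i(z')$ and the exponents $it_i(z')$ vary by only $O(k^{-\epsilon})$ across the transverse disk, via Lemma \ref{lem-ja} and the $z'$-dependence of $h_0$. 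Combined with the Gaussian lower bound
$$\int_{|z'|<(\log k)/\sqrt{k}}e^{-k|z'|^2}|dz'|^2\ \geq\ (1+o(1))\frac{\pi^{n-1}}{k^{n-1}}$$
applied coefficient-by-coefficient, the unit-norm hypothesis $\|s\|^2\leq 1$ then yields the desired bound on $\sum_{i<2\sqrt{k}\log k}|F_i(0)|^2 J_i(0)$.

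The main obstacle is the uniformity in $z'$: the ratio $J_i(z')/J_i(0)$ must be $1+O(k^{-\epsilon})$ throughout the disk of radius $(\log k)/\sqrt{k}$, even though the critical values $t_i$ range over a very wide interval $[O(1),k^{1/3}]$. This is exactly where the careful preliminary choice of coordinates (killing the low-order monomials in $\log H(z',0)$ and in $\phi_1(z',0)$) pays off: the $z'$-dependence of the asymptotic data of $h_0$ enters only at quadratic order, so the Gaussian weight $H^{-k}(z',0)\sim e^{-k|z'|^2}$ confines the integration to a region where the perturbation of $h_0$, and hence of $J_i$ and $t_i$, remains under $k^{-\epsilon}$. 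Once this uniformity is established, combining the two partial sums and taking the supremum over unit-norm $s$ yields the stated pointwise bound; the replacement of $\sqrt{k}\log k$ by $\lambda\sqrt{k}\log k$ for fixed $\lambda>0$ costs only a redefinition of constants.
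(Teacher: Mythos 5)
Your strategy matches the paper's: expand $F$ in powers of $z_n$, split the sum at $i=2\sqrt{k}\log k$, dispose of the tail via the coarse bound $h_0^k(z',t)=h_0^k(0,t)e^{O(k^{1/6-\epsilon'})}$ together with Lemma \ref{lem-2a}, and handle the short range via the estimate $J_i(z')/J_i(0)=1+O(k^{-\epsilon})$ from Lemma \ref{lem-ja} combined with the Gaussian integral $\int_{|z'|<(\log k)/\sqrt{k}}e^{-k|z'|^2}|dz'|^2\geq(1+o(1))\pi^{n-1}/k^{n-1}$; your emphasis on the coordinate normalization killing the low-order terms of $\log H(z',0)$ and $\phi_1(z',0)$ is exactly the right preparation for the needed uniformity. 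One sentence, however, is wrong as stated: the claim that ``it suffices to show $\sum_i|F_i(0)|^2 J_i(0)\leq(1+O(k^{-\epsilon}))\frac{k^{n-1}}{\pi^{n-1}}$'' cannot be carried out, because the unit-norm hypothesis only yields the coarse bound $\sum_{i\geq 2\sqrt{k}\log k}|F_i(0)|^2 J_i(0)\leq e^{o(k^{1/6})}$ on the tail, which is enormous compared with $k^{n-1}/\pi^{n-1}$. The fix, which is what the paper does explicitly and what your subsequent two-part analysis implicitly accomplishes, is to apply Cauchy--Schwarz separately on the two ranges of $i$: pair $\sum_{i\geq 2\sqrt{k}\log k}|F_i(0)|^2 J_i(0)$ with the truncated kernel $h_0^{k+1}(0,t)\sum_{i\geq 2\sqrt{k}\log k}\frac{|z_n|^{2i}}{2\pi J_i}$ and invoke Lemma \ref{lem-2a} to make that product $\epsilon(k)\mathfrak{B}_{k+1}(0,z_n)$ for $t>t_{\lambda\sqrt{k}\log k}$, while pairing $P_2=\sum_{i<2\sqrt{k}\log k}|F_i(0)|^2 J_i(0)$ with the full $\mathfrak{B}_{k+1}$ and proving $P_2\leq(1+O(k^{-\epsilon}))\frac{k^{n-1}}{\pi^{n-1}}$; combining the two with the almost-orthogonality $(a+b)^2\leq(1+\delta)a^2+(1+\delta^{-1})b^2$ then gives the proposition.
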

The same argument applies to $\rho_{k+1,a}$ and $\mathfrak{B}_{k+1,a}$, so we get 
\begin{prop}\label{prop-upper-a}
    For fixed $\lambda>0$, and fixed integer $a\geq 1$, we have
    $$\rho_{k+1,a}(0,z_n)\leq (1+O(k^{-\epsilon}))\frac{k^{n-1}}{\pi^{n-1}}\mathfrak{B}_{k+1,a}(0,z_n),$$
    for $t>t_{\lambda\sqrt{k}\log k}(0)$ and for some $\epsilon>0$.
\end{prop}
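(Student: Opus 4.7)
The plan is to run the argument for Proposition \ref{prop-general} essentially verbatim, with one structural change. The defining observation is that a section $s \in \hcal_{k+1,a}$ is the image under $j_a$ of a section of $L^{k+1} - aD$, hence $s \otimes s_D^{k+1}$ vanishes to order at least $a$ along $D = \{z_n = 0\}$. In the adapted coordinates from the proof of Proposition \ref{prop-general}, writing $s = F(z)(dz_1 \wedge \cdots \wedge dz_n)^{\otimes(k+1)}$, I would note that
$$F(z) = \sum_{i \geq a} F_i(z')\, z_n^i,$$
so every sum over $i$ in the argument is now truncated at the lower end to $i \geq a$.

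I would fix a unit-norm $s \in \hcal_{k+1,a}$ and keep the head/tail split at the cutoff $2\sqrt{k}\log k$, defining
$$P_1 = \sum_{i \geq 2\sqrt{k}\log k} |F_i(0)|^2 J_i(0), \qquad P_2 = \sum_{a \leq i < 2\sqrt{k}\log k} |F_i(0)|^2 J_i(0),$$
with associated integrals $Q_1, Q_2$ over $|z'| < \log k/\sqrt{k}$. The unit-norm bound $Q_1 + Q_2 \leq 1 + o(k^{-2})$ and the sub-mean-value inequality for the holomorphic coefficients $F_i$ both depend only on holomorphicity and not on the starting index, so they yield $P_1 \leq e^{o(k^{1/6})}$ and $P_2 \leq (1 + O(k^{-\epsilon}))\frac{k^{n-1}}{\pi^{n-1}}$ just as in Proposition \ref{prop-general}.

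Then I would apply Cauchy-Schwarz to the $z_n$-series of $F(0, z_n)$ separately on the head and the tail. Invoking the definition
$$h_0^{k+1}(0,t)\sum_{i \geq a} \frac{|z_n|^{2i}}{2\pi J_i} = \mathfrak{B}_{k+1,a}(0, z_n),$$
the head piece gives $|F^{\mathrm{head}}(0, z_n)|^2\, h_0^{k+1} \leq (1 + O(k^{-\epsilon}))\frac{k^{n-1}}{\pi^{n-1}} \mathfrak{B}_{k+1,a}(0, z_n)$, while the tail piece and the Cauchy-Schwarz cross term are both $\epsilon(k)\, \mathfrak{B}_{k+1}(0, z_n)$ by Lemma \ref{lem-2a} (which applies without modification since the cutoff $2\sqrt{k}\log k$ exceeds the fixed $a$). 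Taking the supremum over unit-norm $s \in \hcal_{k+1,a}$ produces the proposition.

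The only step where a little care is required is absorbing the residual $\epsilon(k)\,\mathfrak{B}_{k+1}$ contribution into the main $\frac{k^{n-1}}{\pi^{n-1}}\mathfrak{B}_{k+1,a}$ term. Since $\mathfrak{B}_{k+1}$ is polynomially bounded in $k$ by Lemma \ref{lem-bk-between}, this residual is itself $\epsilon(k)$, and is dominated by the main term whenever $\mathfrak{B}_{k+1,a}$ is not $\epsilon(k)$; in the complementary regime the proposition holds trivially, since $\rho_{k+1,a} \leq \rho_{k+1}$ is already $\epsilon(k)$ by the analogous bound from Proposition \ref{prop-general}. No genuine obstacle arises beyond this bookkeeping.
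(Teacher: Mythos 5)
Your overall plan — truncate the $z_n$-expansion at $i\ge a$ and re-run the argument of Proposition \ref{prop-general} — is exactly what the paper intends (it literally says ``the same argument applies''). The head estimate and the use of the sub-mean-value inequality are fine. The problem is in your final ``bookkeeping'' paragraph, which is the one step the paper glosses over, and which you attempted to handle explicitly: the argument you give there does not close.

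The tail from Lemma \ref{lem-2a} is controlled by $\epsilon(k)\,\mathfrak{B}_{k+1}$, not by $\epsilon(k)\,\mathfrak{B}_{k+1,a}$, and for fixed $a$ and $t$ past $t_a$ (still well within the stated range $t>t_{\lambda\sqrt k\log k}$) these two quantities can differ by an exponential factor. Concretely, take $a=2$ and $t\approx t_1$: there $\mathfrak{B}_{k+1}(t)\sim k^{3/2}$ while $\mathfrak{B}_{k+1,2}(t)\approx h_0(t)\Lambda_2(t)\sim e^{-ck}$ for some $c>0$, so $\epsilon(k)\,\mathfrak{B}_{k+1}$ is \emph{not} dominated by $\frac{k^{n-1}}{\pi^{n-1}}\mathfrak{B}_{k+1,a}$. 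Your fallback for this regime — ``the proposition holds trivially since $\rho_{k+1,a}\leq\rho_{k+1}$ is already $\epsilon(k)$'' — is false: by Proposition \ref{prop-general}, $\rho_{k+1}$ is comparable to $\frac{k^{n-1}}{\pi^{n-1}}\mathfrak{B}_{k+1}$, which is polynomially \emph{large} near $t_1,\dots,t_{a-1}$, not $\epsilon(k)$. So the regime where $\mathfrak{B}_{k+1,a}$ is $\epsilon(k)$ but $\mathfrak{B}_{k+1}$ is not is left uncovered.

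The repair is to strengthen the tail estimate itself rather than try to absorb it afterwards. The proof of Lemma \ref{lem-2a} rests on the concavity of $\iota(i)$, which gives $\Gamma_{2\sqrt{k}\log k}(t)<3e^{-k(\log k)^4/4}\Gamma_{\sqrt{k}\log k}(t)$ and geometric decay of $\Gamma_j(t)$ for $j\ge 2\sqrt{k}\log k$. For $t>t_a$ the same concavity gives $\iota'(i)<0$ on $[a,\infty)$, hence $\Gamma_{\sqrt{k}\log k}(t)\le\Gamma_a(t)\le \mathfrak{B}_{k+1,a}(t)/h_0(t)$; thus the tail is $\epsilon(k)\,\mathfrak{B}_{k+1,a}(t)$ directly. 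For $t_{\lambda\sqrt{k}\log k}<t\le t_a$ the dominant $\Lambda_{i^*(t)}$ with $i^*(t)\geq a$ is included in $\mathfrak{B}_{k+1,a}$, so $\mathfrak{B}_{k+1,a}\asymp\mathfrak{B}_{k+1}$ and the original Lemma \ref{lem-2a} bound already reads as $\epsilon(k)\,\mathfrak{B}_{k+1,a}$. With the tail bounded by $\epsilon(k)\,\mathfrak{B}_{k+1,a}$ in both sub-regimes, the absorption and the Cauchy--Schwarz cross term go through exactly as in Proposition \ref{prop-general}.
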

If we restrict $s$ to $\hcal_{k+1,a+1}$ for $a=O(\frac{\sqrt{k}}{\log k})$, then the same argument together with lemma \ref{lem-concentrated} give us
\begin{prop}\label{prop-hka}
    For $a=O(\frac{\sqrt{k}}{\log k})$, $$\rho_{k+1,a+1}(0,z_n)=  \epsilon(k)\mathfrak{B}_{k+1}(0,z_n),$$
    for $t>t_a(0)-\frac{k}{2a^2}$.
\end{prop}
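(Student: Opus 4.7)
My plan is to run the argument of Proposition \ref{prop-general} essentially verbatim on the subspace $\hcal_{k+1,a+1}$, and then upgrade the resulting fiberwise bound $\mathfrak{B}_{k+1,a+1}$ into $\epsilon(k)\,\mathfrak{B}_{k+1}$ by invoking the tail concentration in Lemma \ref{lem-concentrated}. I take a unit-norm $s\in \hcal_{k+1,a+1}$ and, in the coordinates normalized at $p\in D$ as in Section \ref{sec-upper}, write locally $F(z)=\sum_{i\ge a+1}F_i(z')z_n^i$, since sections in $\hcal_{k+1,a+1}$ vanish on $D$ to order at least $a+1$. As in the proof of Proposition \ref{prop-general}, I split the series into a ``high'' piece with $i\ge 2\sqrt{k}\log k$ and a ``low'' piece with $a+1\le i<2\sqrt{k}\log k$.

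For the high piece, Lemma \ref{lem-2a} gives a contribution of $\epsilon(k)\mathfrak{B}_{k+1}(0,z_n)$ without any modification. For the low piece, I would reuse the Cauchy--Schwarz plus Gaussian transfer step: comparing $\int_{|z'|<\log k/\sqrt{k}}|F_i(z')|^2H^{-k}(z',0)|dz'|^{n-1}$ to $|F_i(0)|^2$ via $\log H(z',0)=|z'|^2+O(|z'|^4)$, and the already proven ratio $J_i(z')/J_i(0)=1+O((\log k)^3/k^{\epsilon/2})$ which is uniform for $i=O(\sqrt{k}\log k)$. Together these yield, for $t>t_{\lambda\sqrt{k}\log k}(0)$,
\begin{equation*}
|s(0,z_n)|^2_{CY}\le \bigl(1+O(k^{-\epsilon})\bigr)\,\frac{k^{n-1}}{\pi^{n-1}}\,\mathfrak{B}_{k+1,a+1}(0,z_n),
\end{equation*}
where only indices $i\ge a+1$ appear on the right since the lower Taylor coefficients of $F$ vanish. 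Taking sup over $s$ gives $\rho_{k+1,a+1}(0,z_n)\le (1+O(k^{-\epsilon}))\,\frac{k^{n-1}}{\pi^{n-1}}\,\mathfrak{B}_{k+1,a+1}(0,z_n)$.

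The second step converts the $a{+}1$-tail into super-polynomial smallness relative to the full Bergman kernel. By the second assertion of Lemma \ref{lem-concentrated}, for $a=O(\sqrt{k}/\log k)$ and $t\ge t_a-k/(2a^2)$ one has $\sum_{i\ge a+1}\Lambda_i(t)=\epsilon(k)\Lambda_a(t)\le \epsilon(k)\sum_i\Lambda_i(t)$. Multiplying by $h_0(t)/(2\pi)$ gives $\mathfrak{B}_{k+1,a+1}(0,z_n)=\epsilon(k)\,\mathfrak{B}_{k+1}(0,z_n)$ on that range. Combining with the upper bound above and absorbing the polynomial prefactor $k^{n-1}/\pi^{n-1}$ into $\epsilon(k)$ yields the claim. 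Note that for $a=O(\sqrt{k}/\log k)$ we have $t_a\gtrsim \sqrt{k}\log k$, which is far larger than $t_{\lambda\sqrt{k}\log k}\sim \sqrt{k}/\log k$, so the hypothesis $t>t_{\lambda\sqrt{k}\log k}(0)$ needed to invoke the Section \ref{sec-upper} argument is automatic under the stated range $t>t_a-k/(2a^2)$.

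The main obstacle I anticipate is bookkeeping the uniformity of the constants as $a$ itself grows with $k$. The Gaussian transfer in $z'$, the $J_i(z')/J_i(0)$ comparison, and the $h_0(z',t)/h_0(0,t)$ comparison must all be controlled uniformly for every $i$ in the window $[a+1,2\sqrt{k}\log k]$; this is already done in Section \ref{sec-upper} for the full range $i=O(\sqrt{k}\log k)$, so no new estimate is needed, but I would carefully verify that the $O(k^{-\epsilon})$ losses stay polynomial so that they are swallowed by the $\epsilon(k)$ gain coming from Lemma \ref{lem-concentrated}.
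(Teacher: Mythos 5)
Your proposal reproduces the paper's intended one-line proof: restrict the argument of Proposition \ref{prop-general} to $\hcal_{k+1,a+1}$, obtaining an upper bound by $\frac{k^{n-1}}{\pi^{n-1}}\mathfrak{B}_{k+1,a+1}(0,z_n)$, and then convert $\mathfrak{B}_{k+1,a+1}$ into $\epsilon(k)\mathfrak{B}_{k+1}$ on the range $t>t_a-\frac{k}{2a^2}$ via Lemma \ref{lem-concentrated}; your observation that $t_a-\frac{k}{2a^2}\gtrsim\sqrt{k}\log k$ dominates $t_{\lambda\sqrt{k}\log k}$ is the right justification that the Section \ref{sec-upper} hypothesis is automatically satisfied. (One small slip: it is the \emph{first} displayed estimate of Lemma \ref{lem-concentrated}, $\sum_{i\geq a+1}\Lambda_i(t)=\epsilon(k)\Lambda_a(t)$, that you need, not the second, which governs $i\le a-1$.)
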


By lemma \ref{lem-bk-between}, we have that for $c_1\frac{\sqrt{k}}{\log k}<t<c_2\sqrt{k}\log k$, $$|s(0,z_n)|^2_{CY}\leq \frac{k^{n-1}}{\pi^{n-1}}(4k+2\sqrt{k}t).$$
Therefore,
\begin{prop}\label{prop-rk-between}
    We have
    $$\rho_{k+1}(0,z_n)\leq \frac{k^{n-1}}{\pi^{n-1}}(4k+2\sqrt{k}t),$$
    if $c_1\frac{\sqrt{k}}{\log k}<t<c_2\sqrt{k}\log k$ for some fixed positive constants $c_1,c_2$.
\end{prop}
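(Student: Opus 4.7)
The plan is to directly combine Proposition \ref{prop-general} with Lemma \ref{lem-bk-between}. Proposition \ref{prop-general} reduces the problem to bounding the one-dimensional fiber Bergman kernel $\mathfrak{B}_{k+1}(0,z_n)$, giving
\[
\rho_{k+1}(0,z_n)\leq (1+O(k^{-\epsilon}))\frac{k^{n-1}}{\pi^{n-1}}\mathfrak{B}_{k+1}(0,z_n),
\]
provided $t>t_{\lambda\sqrt{k}\log k}(0)$. Since $t_a \sim 2k/a$, the threshold $t_{\lambda\sqrt{k}\log k}$ is of order $\sqrt{k}/\log k$, which is exactly the lower bound $c_1\sqrt{k}/\log k$ in our hypothesis (after choosing $\lambda$ in terms of $c_1$). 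So the hypotheses of Proposition \ref{prop-general} are met throughout the stated range.

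Next, for any $t$ in the range $c_1\sqrt{k}/\log k < t < c_2 \sqrt{k}\log k$, I would solve $t = t_a$ for a real parameter $a$, which by the asymptotic $t_a = \frac{2k}{a} - \frac{B}{2}\log\frac{2k}{a}+\cdots$ derived before Lemma \ref{lem-ja} satisfies $a = \frac{2k}{t}(1+o(1))$ and lies in a corresponding range $c_1'\sqrt{k}/\log k < a < c_2'\sqrt{k}\log k$. Lemma \ref{lem-bk-between} then yields $\mathfrak{B}_{k+1}(t_a) < 4k + \frac{2k\sqrt{k}}{a}$, and substituting $a \sim 2k/t$ turns $\frac{2k\sqrt{k}}{a}$ into $\sqrt{k}\,t(1+o(1))$. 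The factor $2$ in $2\sqrt{k}t$ (rather than a bare $\sqrt{k}t$) is the slack that absorbs both this $(1+o(1))$ and the prefactor $(1+O(k^{-\epsilon}))\frac{k^{n-1}}{\pi^{n-1}}$ from Proposition \ref{prop-general} for $k$ large.

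The only real subtlety, and the step I would check carefully, is the bookkeeping of the interval matching: one must verify that the map $a \mapsto t_a$ is monotone and sweeps the stated $t$-interval onto the $a$-interval where Lemma \ref{lem-bk-between} holds, and that the error terms in the $a \leftrightarrow t$ conversion are indeed absorbed by the numerical slack. Monotonicity is clear because $g_a$ depends linearly in $a$ on $t$, so $\partial t_a/\partial a < 0$ from implicit differentiation of $g'_a(t_a)=0$. Once these routine consistency checks are in place, the inequality follows by simply chaining the two displayed bounds, and no further analysis is required since Lemma \ref{lem-bk-between} has already done the hard work of estimating $\mathfrak{B}_{k+1}$ via the concavity of $\iota(a)$.
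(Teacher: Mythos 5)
Your approach is exactly the paper's: apply Proposition \ref{prop-general} to reduce to bounding the fiber kernel $\mathfrak{B}_{k+1}$, then invoke Lemma \ref{lem-bk-between} and convert from the parameter $a$ back to $t$ via $t_a\sim 2k/a$, letting the numerical slack in $4k+2\sqrt{k}t$ absorb the $(1+O(k^{-\epsilon}))$ prefactor and the $(1+o(1))$ error in the conversion. The paper states this in one line; your added remarks on monotonicity of $a\mapsto t_a$ and on the interval matching are the correct routine checks that the paper leaves implicit.
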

For $a=O(\frac{\sqrt{k}}{\log k})$,
we have
\begin{eqnarray*}
    \frac{h_0^k(z',t_a)}{h_0^k(0,t_a)}&=&1+O(k\frac{|z'|}{(2k/a)^{1+\epsilon}})\\
    &=&1+O(\frac{a^{1+\epsilon}(\log k)}{k^{1/2+\epsilon}}),
\end{eqnarray*} 
and also
$$at_a(z')-at_a(0)=O(\frac{a|z'|}{(2k/a)^{\epsilon}})=O(\frac{a^{1+\epsilon}(\log k)}{k^{1/2+\epsilon}}).$$
Therefore $$\frac{J_a(z')}{J_a(0)}=1+O(\frac{a^{1+\epsilon}(\log k)}{k^{1/2+\epsilon}}).$$
Now for $t>t_a-\frac{k}{2a^2}$, we can divide the summation in $P_2$ and $Q_2$ into two parts $\sum_{i\geq a+1}$ and $\sum_{i\leq 2} $. Then by lemma \ref{lem-concentrated}, we can ignore the part $\sum_{i\geq a+1}$. And so we get better precision 

\begin{prop}\label{prop-upper-bound}
    For $a=O(\frac{\sqrt{k}}{\log k})$, we have 
    $$\rho_{k+1}(0,z_n)\leq (1+O(\frac{a^{1+\epsilon}(\log k)}{k^{1/2+\epsilon}}))\frac{k^{n-1}}{\pi^{n-1}}\mathfrak{B}_{k+1}(0,z_n),$$
    for $t>t_a(0)-\frac{k}{2a^2}$ and for some $\epsilon>0$.
\end{prop}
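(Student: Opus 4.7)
The plan is to refine the argument used in the proof of Proposition \ref{prop-general}, exploiting the concentration statement of Lemma \ref{lem-concentrated} and the sharper ratio estimate $J_i(z')/J_i(0)=1+O(a^{1+\epsilon}(\log k)/k^{1/2+\epsilon})$ that was just derived. Take any unit-norm section $s\in\hcal_{k+1}$ and write it in the normalized coordinates of Section \ref{sec-upper} as $s=F(z)(dz_1\wedge\cdots\wedge dz_n)^{\otimes(k+1)}$ with $F(z)=\sum_{i\geq 1}F_i(z')z_n^i$, so that $H(z',0)=e^{|z'|^2+O(|z'|^4)}$ and the coefficients of $h_0$ at $z'$ agree with those at $0$ up to $O(|z'|^2)$. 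At the base point split the series at index $a$: $F(0,z_n)=L(z_n)+R(z_n)$, where $L=\sum_{i\leq a}F_i(0)z_n^i$ and $R=\sum_{i\geq a+1}F_i(0)z_n^i$.

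For the tail $R$, Cauchy--Schwarz gives
$$|R(z_n)|^2h_0^{k+1}(0,t)\leq\Bigl(\sum_{i\geq a+1}|F_i(0)|^2J_i(0)\Bigr)\Bigl(\sum_{i\geq a+1}\frac{|z_n|^{2i}}{J_i(0)}\,h_0^{k+1}(0,t)\Bigr).$$
The first factor is bounded by $(1+O(k^{-\epsilon}))\,k^{n-1}/\pi^{n-1}$ by the same reproducing-kernel/Gaussian argument already used in the proof of Proposition \ref{prop-general} (using $\|s\|\leq 1$ and $H^{-k}\approx e^{-k|z'|^2}$ on the polydisk $|z'|<\log k/\sqrt{k}$). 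The second factor equals $2\pi h_0(0,t)\sum_{i\geq a+1}\Lambda_i(t)$, which is $\epsilon(k)\,\mathfrak{B}_{k+1}(0,z_n)$ for $t\geq t_a-k/(2a^2)$ by Lemma \ref{lem-concentrated}. Hence the whole tail contribution is $\epsilon(k)\,\mathfrak{B}_{k+1}(0,z_n)$, negligible against the main term.

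For the head $L$, Cauchy--Schwarz again gives
$$|L(z_n)|^2h_0^{k+1}(0,t)\leq\Bigl(\sum_{i\leq a}|F_i(0)|^2J_i(0)\Bigr)\cdot 2\pi\,\mathfrak{B}_{k+1}(0,z_n).$$
Combining the refined ratio $J_i(z')/J_i(0)=1+O(a^{1+\epsilon}(\log k)/k^{1/2+\epsilon})$, which holds uniformly for $1\leq i\leq a$ because $t_i\geq t_a$ makes the $z'$-variation only smaller, with the standard pointwise inequality $|F_i(0)|^2\leq(1+O((\log k)^2/k))(k^{n-1}/\pi^{n-1})\int_{|z'|<\log k/\sqrt{k}}|F_i(z')|^2H^{-k}|dz'|^2$, then summing over $i\leq a$ and using $\|s\|^2\leq 1$ yields
$$\sum_{i\leq a}|F_i(0)|^2J_i(0)\leq\Bigl(1+O\bigl(a^{1+\epsilon}(\log k)/k^{1/2+\epsilon}\bigr)\Bigr)\frac{k^{n-1}}{\pi^{n-1}}.$$

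Putting the two pieces together via $|F(0,z_n)|^2\leq|L|^2+2|L||R|+|R|^2$, where the cross term is controlled by a further Cauchy--Schwarz and is again $\epsilon(k)$ times the main term, and absorbing the harmless factor $1+O(k/t^{6+\epsilon})$ from $h_{CY}^{k+1}/h_0^{k+1}$ (valid since $t>t_a-k/(2a^2)>k^{1/3}$ in the range under consideration) gives the claim after taking the supremum over unit-norm $s$. The main technical point to verify is that the ratio estimate for $J_i(z')/J_i(0)$ is genuinely uniform for all $i\in\{1,\dots,a\}$ (which holds because $t_i\approx 2k/i$ is only larger for smaller $i$) and that the Gaussian inequality on the transverse slice loses only $(1+O((\log k)^2/k))$, an error dominated by the $J_i$-error, so that the stated precision is preserved.
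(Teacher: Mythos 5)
Your proposal has the same overall shape as the paper's argument: split the fiberwise power series of $F$ at index $a$, discard the tail via Lemma~\ref{lem-concentrated}, and use the sharpened ratio $J_i(z')/J_i(0)=1+O(a^{1+\epsilon}(\log k)/k^{1/2+\epsilon})$ (uniformly for $i\le a$, as you correctly justify) on the head. The paper's own proof does exactly this, dividing the sum in $P_2$ and $Q_2$ at $i=a$ and invoking Lemma~\ref{lem-concentrated}. Your handling of the error sources (the $h_{CY}^{k+1}/h_0^{k+1}$ factor, the Gaussian loss on the transverse slice, and the cross term $2|L||R|$) is also sound.

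There is, however, one genuine gap in your tail estimate. You write a single Cauchy--Schwarz over all $i\ge a+1$ and assert that the first factor $\sum_{i\ge a+1}|F_i(0)|^2J_i(0)$ is bounded by $(1+O(k^{-\epsilon}))\,k^{n-1}/\pi^{n-1}$ ``by the same reproducing-kernel/Gaussian argument.'' That argument relies on $J_i(z')$ and $J_i(0)$ being comparable up to $1+O(k^{-\epsilon})$ for all $|z'|<\log k/\sqrt{k}$. This comparability holds only for $i<2\sqrt{k}\log k$; for $i\ge 2\sqrt{k}\log k$ the critical point $t_i$ can drop near $k^{1/3}$, where $h_0^k(z',t)/h_0^k(0,t)$ oscillates by factors as large as $e^{O(k^{1/6-\epsilon'})}$, and the paper's own bound for this part ($P_1$ in the proof of Proposition~\ref{prop-general}) is only $e^{o(k^{1/6})}$, which is super-polynomial. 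Pairing $e^{o(k^{1/6})}$ against the $\epsilon(k)$ you obtain from Lemma~\ref{lem-concentrated} (roughly $e^{-c(\log k)^2}$ near $i=a$) does not yield $\epsilon(k)$.

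The repair is straightforward and is exactly what the paper does implicitly: keep the three-way split inherited from the proof of Proposition~\ref{prop-general}. Write $F(0,z_n)=L+R_1+R_2$ with $L$ the sum over $i\le a$, $R_1$ over $a<i<2\sqrt{k}\log k$, and $R_2$ over $i\ge 2\sqrt{k}\log k$. Cauchy--Schwarz on $R_1$ pairs a polynomially bounded first factor against the $\epsilon(k)$ from Lemma~\ref{lem-concentrated}, giving $\epsilon(k)\,\frac{k^{n-1}}{\pi^{n-1}}\mathfrak{B}_{k+1}$; Cauchy--Schwarz on $R_2$ pairs $e^{o(k^{1/6})}$ against $4e^{-\frac14 k(\log k)^4}\mathfrak{B}_{k+1}$ from Lemma~\ref{lem-2a}, giving $\epsilon(k)\mathfrak{B}_{k+1}$. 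Both tails and all cross terms are then negligible against the head, whose refined bound you derived correctly, and the proposition follows.
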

\section{lower bound}\label{sec-lower}

For $a=O(\frac{\sqrt{k}}{\log k})$,
let $f=z_n^a$. We can estimate 
$$I(a)=\int_{U_k}H^{-k}(z',0)|z_n^a|^2h_0^k\frac{|dz|^2}{|z_n|^2},$$ as follows.
\begin{eqnarray*}
    I(a)&=&(1+O(\frac{a^{1+\epsilon}(\log k)}{k^{1/2+\epsilon}}))\int_{|z'|<\frac{\log k}{\sqrt{k}}} e^{-k|z'|^2}|dz'|^2\\
    &=&(1+O(\frac{a^{1+\epsilon}(\log k)}{k^{1/2+\epsilon}}))J_a(0)\frac{\pi^{n-1}}{k^{n-1}}
\end{eqnarray*}
More importantly, close to the border of $U_k$, $H^{-k}(z',0)|z_n^a|^2h_0^k$ is very small. More precisely, let $U_{k,1}=\{z\in U_k|t<3k^{1/3} \} $ and  $U_{k,2}=\{z\in U_k||z'|>\frac{\log k}{2\sqrt{k}} \} $, then we have
$$\int_{U_{k,j}}H^{-k}(z',0)|z_n^a|^2h_0^k\frac{|dz|^2}{|z_n|^2}=\epsilon(k)I(a),$$
for $j=1,2$.

We need to use H\"ormander's $L^2$ estimate to construct global sections from local holomorphic functions. The following lemma is well-known, see for example \cite{Tian1990On}. 

\begin{lem}
    Suppose $(M,g)$ is a complete \kahler manifold of complex dimension $n$, $\mathcal L$ is a line bundle on $M$ with hermitian metric $h$. If 
    $$\langle-2\pi i \Theta_h+Ric(g),v\wedge \bar{v}\rangle_g\geq C|v|^2_g$$
    for any tangent vector $v$ of type $(1,0)$ at any point of $M$, where $C>0$ is a constant and $\Theta_h$ is the curvature form of $h$. Then for any smooth $\mathcal L$-valued $(0,1)$-form $\alpha$ on $M$ with $\bar{\partial}\alpha=0$ and $\int_M|\alpha|^2dV_g$ finite, there exists a smooth $\mathcal L$-valued function $\beta$ on $M$ such that $\bar{\partial}\beta=\alpha$ and $$\int_M |\beta|^2dV_g\leq \frac{1}{C}\int_M|\alpha|^2dV_g$$
    where $dV_g$ is the volume form of $g$ and the norms are induced by $h$ and $g$.
\end{lem}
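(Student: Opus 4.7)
The plan is to invoke H\"ormander's classical $L^2$ solvability technique, which rests on two ingredients: a Bochner-Kodaira-Nakano a priori estimate supplied by the curvature hypothesis, and a Hahn-Banach duality argument that converts this estimate into $L^2$ solvability of $\bar\partial$. For any $\mathcal{L}$-valued $(0,1)$-form $\varphi$ with compact support, I would combine the K\"ahler identity $[\bar\partial,\Lambda]=-i\partial^*$ with integration by parts to obtain a Bochner-type identity yielding
$$\|\bar\partial\varphi\|^2 + \|\bar\partial^*\varphi\|^2 \geq \int_M \langle(-2\pi i\Theta_h + \Ric(g))\varphi,\varphi\rangle\, dV_g;$$
the curvature assumption then gives the a priori inequality $\|\bar\partial\varphi\|^2 + \|\bar\partial^*\varphi\|^2 \geq C\|\varphi\|^2$ on compactly supported smooth $(0,1)$-forms.

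Next I would extend this inequality to the common $L^2$-domain of $\bar\partial$ and $\bar\partial^*$. This is where completeness of $(M,g)$ is essential: it allows the construction of cutoffs $\chi_j$ with $\chi_j\to 1$ and $|d\chi_j|_g\to 0$ uniformly on compact sets, so that the commutators $[\bar\partial,\chi_j]\varphi$ and $[\bar\partial^*,\chi_j]\varphi$ tend to zero in $L^2$; this is the classical Andreotti-Vesentini density argument. With the extended estimate in hand, I would define a linear functional $T$ on the range of $\bar\partial^*$ by $T(\bar\partial^*\varphi) := \langle\varphi_1,\alpha\rangle$, where $\varphi_1$ is the orthogonal projection of $\varphi$ onto $\ker\bar\partial$ (so that $T$ is well defined thanks to $\bar\partial\alpha=0$). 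The a priori inequality yields $|T(\bar\partial^*\varphi)|\leq C^{-1/2}\|\alpha\|\cdot\|\bar\partial^*\varphi\|$, and Hahn-Banach together with the Riesz representation theorem then produces $\beta\in L^2$ satisfying $\bar\partial\beta=\alpha$ weakly and $\|\beta\|^2\leq C^{-1}\|\alpha\|^2$. Interior elliptic regularity of $\bar\partial$ on sections finally upgrades $\beta$ to a smooth solution.

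The principal technical hurdle is the density step: verifying that compactly supported forms are dense in the graph norm of $\bar\partial\oplus\bar\partial^*$. This is the only place where completeness is used essentially, and it requires constructing the $\chi_j$ from a smoothed distance function on $(M,g)$ with uniformly bounded gradient, then checking that multiplying by $\chi_j$ does not enlarge either $\|\bar\partial\varphi\|$ or $\|\bar\partial^*\varphi\|$ in the limit. Everything else in the argument is formal functional analysis once the a priori inequality has been extended to the full $L^2$-domain.
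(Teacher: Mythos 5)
The paper does not actually prove this lemma; it simply cites it as well known (to Tian's 1990 paper), so there is no in-paper proof to compare against. Your sketch is the standard H\"ormander--Andreotti--Vesentini argument — Bochner--Kodaira--Nakano a priori estimate, density of compactly supported forms via cutoffs built from the completeness of $g$, then Hahn--Banach/Riesz to produce a weak $L^2$ solution, and elliptic regularity to upgrade to smoothness — and this is exactly the route any of the cited references would follow. The one place to be slightly more careful if you were to write this out in full is the duality step: you need the decomposition $\varphi=\varphi_1+\varphi_2$ with $\varphi_1\in\ker\bar\partial$ and $\varphi_2\in(\ker\bar\partial)^\perp$, then note both that $\langle\alpha,\varphi_2\rangle=0$ (since $\bar\partial\alpha=0$) and that $\bar\partial^*\varphi_2=0$, so the a priori bound needs to be applied only to $\varphi_1\in\ker\bar\partial$, where $\|\bar\partial^*\varphi_1\|^2\geq C\|\varphi_1\|^2$; your outline has this essentially right.
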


Fix $k$ large so that the assumption of the Lemma is satisfied in our setting with $M=X\setminus D$, the line bundle being $K_X^{k+1}$ equipped with the Cheng-Yau metric.
Let $\chi_1(y)$ be a smooth decreasing function satisfying the following
\begin{itemize}
    \item $\chi_1(y)=1$ for $y<\frac{(\log k)^2}{4k}$;
    \item  $\chi_1(y)=0$ for $y>\frac{(\log k)^2}{k}$;
    \item $0\leq \chi_1'(y)\leq \frac{2k}{(\log k)^2}$.
\end{itemize}
Let $\chi_2(y)$ be a smooth increasing function satisfying the following
\begin{itemize}
    \item $\chi_2(y)=1$ for $y>3k^{1/3}$;
    \item  $\chi_2(y)=0$ for $y<k^{1/3}$;
    \item $0<\chi_2'(y)\leq k^{-1/3} $.
\end{itemize}
Then we define the cut-off function $$\chi(z)=\chi_1(|z'|^2)\chi_2(t).$$
Then $$\dbar \chi=\chi_2(t)\chi_1'(|z'|^2)\sum z_i d\bar{z}_i-\chi_1|z'|^2\chi_2'(t)\frac{d\bar{z}_n}{\bar{z}_n}. $$
Then since $\omega_{KE}$ is equivalent to $\omega_{CG}$
we can estimate 
$$|\dbar \chi|^2_{CY}\leq C(\frac{k}{(\log k)^2}+t^2k^{-2/3}),$$
for some $C>0$.

Then $$\int_{U_k}H^{-k}(z',0)|\dbar \chi z_n^a|^2h_0^k\frac{|dz|^2}{|z_n|^2}\leq V_1+V_2+V_3,$$
where $$V_1=9C\int_{U_{k,1}}H^{-k}(z',0)|z_n^a|^2h_0^k\frac{|dz|^2}{|z_n|^2},$$ $$V_2=\frac{Ck}{(\log k)^2}\int_{U_{k,2}}H^{-k}(z',0)|z_n^a|^2h_0^k\frac{|dz|^2}{|z_n|^2},$$ and $$V_3=(\frac{Ck}{(\log k)^2}+9C)\int_{U_{k,1}\cap U_{k,2} }H^{-k}(z',0)|z_n^a|^2h_0^k\frac{|dz|^2}{|z_n|^2}.$$
Therefore, $$\int_{U_k}H^{-k}(z',0)|\dbar \chi z_n^a|^2h_0^k\frac{|dz|^2}{|z_n|^2}=\epsilon(k)I(a).$$
Considering $z_n^a\dbar\chi$ as a $K_X^{k+1}$-valued $(0,1)$-form,
we can solve the equation $$\dbar v=z_n^a\dbar\chi, $$
and get $v\in L^2(\xmd,K_X^{k+1},|\cdot|_{CY},\frac{\omega^n_{KE}}{n!})$ satisfying
\begin{eqnarray*}
    \int_{\xmd}|v|^2 _{CY}\frac{\omega^n_{KE}}{n!}&\leq&\frac{c_a}{k}\int_{U_k}H^{-k}(z',0)|\dbar \chi z_n^a|^2h_0^k\frac{|dz|^2}{|z_n|^2} \\
    &=&\epsilon(k)I(a)
\end{eqnarray*}
So we get a section $\chi z_n^a-v\in \hcal_{k+1}$ whose Cheng-Yau $L_2$-norm is $(1+\epsilon(k))I(a)$.
Then we have $$\parallel \chi z_n^a-v \parallel^2_{CY}=(1+\epsilon(k))I(a).$$
Since $v$ is holomorphic in $U_k\backslash U_{k,1}$ and 
$$\int_{U_k\backslash U_{k,1}}|v|_{CY}^2\frac{|dz|^2}{|z_n|^2}=\epsilon(k)I(a), $$
we get $$I(a)^{-1}|v(0,z_n)|_{CY}^2=\epsilon(k)\mathfrak{B}_{k+1}(0,z_n).$$
Since $$\frac{|z_n^a|^2}{I(a)}=(1+O(\frac{a^{1+\epsilon}(\log k)}{k^{1/2+\epsilon}}))\frac{|z_n^a|^2}{J_a(0)}\frac{\pi^{n-1}}{k^{n-1}},$$
by lemma \ref{lem-concentrated}, we get that
\begin{lem}
    For $a=O(\frac{\sqrt{k}}{\log k})$,
    $$\rho_{k+1}(0,z_n)\geq (1+O(\frac{a^{1+\epsilon}(\log k)}{k^{1/2+\epsilon}}))\frac{k^{n-1}}{\pi^{n-1}}\mathfrak{B}_{k+1}(0,z_n),$$
    for $|t-t_a(0)|\leq \frac{k}{2a^2}$.
\end{lem}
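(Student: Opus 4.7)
The plan is to establish the lower bound by constructing an explicit section of $K_X^{k+1}$ whose pointwise Cheng-Yau norm at $(0,z_n)$ realizes (up to the stated error) the right-hand side, via a standard Hörmander $L^2$-cutoff argument. The natural test function is the local holomorphic function $z_n^a$, because by Lemma \ref{lem-concentrated}, inside the shell $|t-t_a(0)|\leq k/(2a^2)$ the single mode $|z_n|^{2a}/J_a(0)$ accounts for $\mathfrak{B}_{k+1}(0,z_n)$ up to an $\epsilon(k)$ relative error, while by Proposition \ref{prop-upper-bound} the upper bound matches $\mathfrak{B}_{k+1}$ with the same precision. So recovering the $|z_n^a|^2/J_a(0)$-mode globally already gives the full lower bound.

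First, I would compute the reference integral $I(a)=\int_{U_k}H^{-k}(z',0)|z_n^a|^2h_0^k\,|dz|^2/|z_n|^2$ and show it factors essentially as $J_a(0)\cdot \pi^{n-1}/k^{n-1}$, using the normalization $\log H(z',0)=|z'|^2+O(|z'|^4)$ from the start of Section \ref{sec-upper} together with the ratio estimate $J_a(z')/J_a(0)=1+O(a^{1+\epsilon}\log k/k^{1/2+\epsilon})$ already derived. The same pointwise bounds then show that the boundary regions $U_{k,1}$ (small $t$) and $U_{k,2}$ (large $|z'|$) each contribute only $\epsilon(k)I(a)$; these are Gaussian-tail estimates in the $z'$ direction and $J_a$-concentration estimates in the $z_n$ direction.

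Second, with the cutoff $\chi(z)=\chi_1(|z'|^2)\chi_2(t)$ of Section \ref{sec-lower}, the product $\chi\,z_n^a$ is compactly supported in $\xmd$, and its $\dbar$ is supported precisely in $U_{k,1}\cup U_{k,2}$, where $|\dbar\chi|^2_{CY}=O(k/(\log k)^2+t^2k^{-2/3})$. The boundary-tail estimate from the previous step therefore yields $\int|\dbar(\chi z_n^a)|^2_{CY}\,\omega_{KE}^n/n!=\epsilon(k)\,I(a)$. Applying Hörmander's $L^2$-lemma to the line bundle $K_X^{k+1}$ equipped with the Cheng-Yau metric (the curvature dominates $k\,\omega_{KE}$, so the lower constant $C$ is at least of order $k$), one solves $\dbar v=z_n^a\dbar\chi$ with $\|v\|^2_{CY}=\epsilon(k)\,I(a)$, producing a genuine section $s=\chi z_n^a-v\in\hcal_{k+1}$ of norm $\|s\|^2_{CY}=(1+\epsilon(k))I(a)$. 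A sub-mean-value inequality on a Cheng-Yau ball of radius comparable to $\nu_{KE}(z)\asymp 1/t$ then converts the $L^2$ bound on $v$ to a pointwise bound $|v(0,z_n)|^2_{CY}=\epsilon(k)\,I(a)\,\mathfrak{B}_{k+1}(0,z_n)$ inside the shell. Combining $|z_n^a|^2/I(a)=(1+O(a^{1+\epsilon}\log k/k^{1/2+\epsilon}))(k^{n-1}/\pi^{n-1})\,|z_n^a|^2/J_a(0)$ with Lemma \ref{lem-concentrated} to identify this single term with $\mathfrak{B}_{k+1}(0,z_n)$ yields the stated lower bound.

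The main obstacle I expect is the last step: converting the Hörmander $L^2$ bound on $v$ into a pointwise bound tight enough to be absorbed as $\epsilon(k)\,\mathfrak{B}_{k+1}$ rather than as a crude $\epsilon(k)\,k^{n-1}$. This requires the mean-value inequality on balls of radius $\asymp 1/t$ in the Cheng-Yau geometry to produce precisely the same $t$-dependence as $\mathfrak{B}_{k+1}$ in the shell (and therefore to mimic the profile of $h_0^{k+1}(t)|z_n^{2a}|/J_a$ near $t=t_a$). A secondary subtlety is that the Hörmander constant $C$ must indeed be bounded below by a positive multiple of $k$ uniformly up to the divisor; this has to be verified from the curvature and Ricci estimates for $\omega_{KE}$ recalled in Section 2, and depends on $k$ being large enough that the $K_X^{k+1}$ curvature dominates the Ricci term in the relevant $(1,1)$-form inequality.
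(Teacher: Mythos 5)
Your construction---test function $z_n^a$, the cutoff $\chi=\chi_1(|z'|^2)\chi_2(t)$, H\"ormander's estimate, subtraction of the correction $v$---is the same as the paper's. The secondary worry you raise about the H\"ormander constant is easily dispatched: since $\Ric(\omega_{KE})=-\omega_{KE}$, the curvature of $K_X^{k+1}$ with the Cheng--Yau metric is $(k+1)\omega_{KE}$, so $-2\pi i\Theta_h+\Ric(\omega_{KE})=k\,\omega_{KE}$ and one may take $C=k$ uniformly.

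The real gap is exactly where you suspect, in converting the $L^2$ bound on $v$ to a pointwise bound, and your proposed fix does not close it. A sub-mean-value inequality on a Cheng--Yau ball of injectivity radius $\asymp 1/t$ produces at best $|v(0,z_n)|^2_{CY}\lesssim \tfrac{1}{\vol(B)}\|v\|^2_{CY}\lesssim \epsilon(k)\,I(a)$ (the ball volume and curvature corrections cost only polynomial factors). That is a bound by $\epsilon(k)\,I(a)$, \emph{not} by $\epsilon(k)\,I(a)\,\mathfrak{B}_{k+1}(0,z_n)$, and these differ by more than any power of $k$ precisely where you need the estimate: at the edges of the shell $|t-t_a(0)|\approx \tfrac{k}{2a^2}$ the profile $|z_n^a|^2h_0^{k+1}/I(a)\approx \tfrac{k^{n-1}}{\pi^{n-1}}\Lambda_a(t)h_0(t)$ has dropped from its peak by a factor $e^{-k/(16a^2)}$, which for small $a$ (say $a=O(1)$) is superpolynomially, even exponentially, small. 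A flat $\epsilon(k)\,I(a)$ bound on $|v|^2_{CY}$ then swamps $|z_n^a|^2h_{CY}^{k+1}$ there and gives no lower bound at all. Your parenthetical remark that the mean-value inequality must ``produce precisely the same $t$-dependence as $\mathfrak{B}_{k+1}$'' is the right diagnosis, but the mean-value inequality does not do this; it is blind to the exponential decay of the single-mode profile.

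What the paper does instead is reuse the Bergman-kernel pointwise estimate from the upper-bound section. The correction $v$ is holomorphic on $U_k\setminus U_{k,1}$ (indeed wherever $\dbar\chi=0$) with $\int_{U_k\setminus U_{k,1}}|v|^2_{CY}\,\omega_{KE}^n/n!=\epsilon(k)\,I(a)$. The argument in the proof of Proposition \ref{prop-general}---expand the local holomorphic representative of $v$ as a power series in $z_n$, use Cauchy--Schwarz in $i$ and the sub-mean-value inequality in $z'$---applies verbatim to this local holomorphic piece and yields
$$|v(0,z_n)|^2_{CY}\leq (1+O(k^{-\epsilon}))\tfrac{k^{n-1}}{\pi^{n-1}}\mathfrak{B}_{k+1}(0,z_n)\cdot\epsilon(k)\,I(a),$$
hence $I(a)^{-1}|v(0,z_n)|^2_{CY}=\epsilon(k)\,\mathfrak{B}_{k+1}(0,z_n)$. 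This bound inherits exactly the $t$-profile of $\mathfrak{B}_{k+1}$, so it really is negligible compared to $|z_n^a|^2h_{CY}^{k+1}/I(a)$ throughout the shell $|t-t_a(0)|\leq \tfrac{k}{2a^2}$ (where Lemma \ref{lem-concentrated} identifies $\mathfrak{B}_{k+1}$ with the $a$-th mode up to $1+\epsilon(k)$). That is the missing ingredient: replace your ball mean-value inequality with the disk-by-disk Bergman-kernel bound already established for the upper bound.
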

If we start with a function $f=\sum_{i=1}^{a}c_iz_n^i$, then for each $z'$, we know that for the integration $\int |f|^2h_0^k\frac{|dz_n|^2}{|z_n|^2} $, the part $t<t_a-\frac{k}{a^2}$ is $\epsilon(k)$ compared to the part $t\geq t_a-\frac{k}{a^2}$. And since for $t=O(\frac{2k}{a})$, we have $\frac{h_0^k(z',0)}{h_0^k(0,t)}=1+O(\frac{a^{1+\epsilon}|z'|}{k^\epsilon})$, we get that
$$\int_{U_k}H^{-k}(z',0)|f|^2h_0^k\frac{|dz|^2}{|z_n|^2}=(1+O(\frac{a^{1+\epsilon}\log k}{k^{1/2+\epsilon}}))\frac{\pi^{n-1}}{k^{n-1}}\int_{t>k^{1/3}}|f|^2h_0^k(0,t)\frac{|dz_n|^2}{|z_n|^2}.$$
Then we can repeat the process for $z_n^a$ and prove the following:
\begin{prop}\label{prop-lower-bound}
    For $a=O(\frac{\sqrt{k}}{\log k})$,
    $$\rho_{k+1}(0,z_n)\geq (1+O(\frac{a^{1+\epsilon}(\log k)}{k^{1/2+\epsilon}}))\frac{k^{n-1}}{\pi^{n-1}}\mathfrak{B}_{k+1}(0,z_n),$$
    for $t>t_a(0)- \frac{k}{2a^2}$ and for some $\epsilon>0$.
\end{prop}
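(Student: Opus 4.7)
The plan is to adapt the argument of the preceding lemma by upgrading the local test function from the single monomial $z_n^a$ to a polynomial in $z_n$ of degree at most $a$. Concretely, I would fix a target point $p_0=(0,z_n^0)$ with $t_0:=-\log|z_n^0|^2>t_a(0)-\frac{k}{2a^2}$ and set
$$f(z_n):=\sum_{i=1}^{a}\frac{\overline{(z_n^0)^i}}{2\pi J_i(0)}\,z_n^i,$$
the reproducing polynomial at $p_0$ for the degree $\leq a$ subspace of the $z_n$-disk Bergman space. The crucial point is that lemma \ref{lem-concentrated}, applied in the range $t_0>t_a(0)-\frac{k}{2a^2}$, asserts $\sum_{i\geq a+1}\Lambda_i(t_0)=\epsilon(k)\Lambda_a(t_0)$, so the partial kernel $h_0^{k+1}(t_0)\sum_{i\leq a}|z_n^0|^{2i}/(2\pi J_i)$ coincides with the full $\mathfrak{B}_{k+1}(0,z_n^0)$ up to an $\epsilon(k)$ factor. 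By construction, $|f(z_n^0)|^2h_0^{k+1}(0,t_0)/\|f\|^2$ equals this partial kernel.

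Having chosen $f$, the $z'$-averaging identity displayed just before the proposition gives
$$\int_{U_k}H^{-k}(z',0)|f|^2h_0^k\frac{|dz|^2}{|z_n|^2}=\bigl(1+O(\tfrac{a^{1+\epsilon}\log k}{k^{1/2+\epsilon}})\bigr)\frac{\pi^{n-1}}{k^{n-1}}I_f,$$
where $I_f$ is the $z_n$-disk $L^2$-norm squared of $f$. I would then import the cutoff $\chi$ and the Hörmander construction of the preceding lemma verbatim. The only new verification needed is that $\int_{U_k}H^{-k}|\dbar\chi\cdot f|^2h_0^k\frac{|dz|^2}{|z_n|^2}=\epsilon(k)I_f$, which reduces to showing that each term $|z_n^i|^2h_0^k$ with $i\leq a$ contributes only $\epsilon(k)J_i$ to the cutoff integral. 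This follows from the Gaussian concentration of $|z_n^i|^2h_0^k$ around $t_i$, which lies well above $k^{1/3}$ for every $i\leq a=O(\sqrt{k}/\log k)$, combined with the $e^{-k|z'|^2}$ decay of $H^{-k}(z',0)$. Hörmander then produces $v$ with $\dbar v=\dbar(\chi f)$ and $\|v\|_{CY}^2=\epsilon(k)\frac{\pi^{n-1}}{k^{n-1}}I_f$.

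Since $v$ is holomorphic on an $\omega_{KE}$-ball around $p_0$ of radius bounded below by the injectivity-radius estimate recalled in the setup section, a sub-mean-value estimate converts the $L^2$ bound into a pointwise bound $|v(p_0)|_{CY}^2=\epsilon(k)|f(z_n^0)|^2h_0^{k+1}(0,t_0)$. Setting $s:=\chi f-v\in\hcal_{k+1}$, I would conclude $|s(p_0)|_{CY}^2=(1-\epsilon(k))|f(z_n^0)|^2h_0^{k+1}(0,t_0)$ and $\|s\|_{CY}^2=(1+O(\tfrac{a^{1+\epsilon}\log k}{k^{1/2+\epsilon}}))\frac{\pi^{n-1}}{k^{n-1}}I_f$, and dividing yields the desired lower bound.

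The main technical obstacle is the cross-term bookkeeping when controlling the cutoff losses. In the preceding lemma, the test function is a pure monomial whose $L^2$ mass concentrates at a single $t_a$, making the cutoff losses trivially negligible; for the polynomial $f$ one must simultaneously handle $a$ monomials concentrated at distinct points $t_i$ and show that no cross-term blows up the sharp error factor. The rescue is that the one-dimensional concentration estimates in the fiberwise section apply uniformly term-by-term, and the sharp error factor $O(a^{1+\epsilon}(\log k)/k^{1/2+\epsilon})$ is inherited from the pointwise comparison $h_0^k(z',t)/h_0^k(0,t)=1+O(a^{1+\epsilon}|z'|/k^{\epsilon})$ already exploited in the upper-bound proposition \ref{prop-upper-bound}.
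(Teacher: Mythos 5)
Your proposal is correct and takes essentially the same route as the paper: both start from a degree-$\leq a$ polynomial in $z_n$ (the paper writes $f=\sum_{i=1}^a c_i z_n^i$ and tacitly optimizes, which amounts to your reproducing-kernel choice), invoke Lemma \ref{lem-concentrated} to identify the truncated fiberwise kernel with the full $\mathfrak{B}_{k+1}$ up to $\epsilon(k)$, run the same cutoff-and-H\"ormander construction, and inherit the sharp error factor from the $z'$-comparison $h_0^k(z',t)/h_0^k(0,t)$. Your proposal is in fact slightly more explicit than the paper in two places that the paper elides — the exact reproducing choice of $f$ and the sub-mean-value step converting the $L^2$ smallness of $v$ into a pointwise bound at $p_0$.
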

In particular, for a fixed $a=O(1)$,
we have 
$$\rho_{k+1}(0,z_n)= (1+O(\frac{(\log k)}{k^{1/2+\epsilon}}))\frac{k^{n-1}}{\pi^{n-1}}\mathfrak{B}_{k+1}(0,z_n),$$
for $t>t_a(0)-\frac{k}{2a^2}$.
\begin{proof}[Proof of theorem \ref{thm-main-hka} ]
    For convenience, we work on $\hcal_{k+1}$.
    For any point $p_0$ satisfying the condition of the theorem, we let $p\in D$ be the point such that $d(p_0,p)=d(p_0,D)$. Then we can find a point $p'$ close to $p$ and a coordinates $z$ centered at $p'$ satisfying the conditions at the beginning of this section such that the coordinates of $p_0$ are $(0,w_n)$.  
    For any point $p_1$ with coordinates $z$, the distance $d(p_1,p')=|z|+O(|z|^2)$. Since  $d(p_0,p')\geq d(p_0,p)$, we have $|w_n|(1+O(|w_n|))\geq d(p_0,p)$. Let the coordinates of $p$ be $(x',0)$, then we have $|x'|(1+O(|x'|))\leq 2|w_n|(1+O(|w_n|))$. So 
    $$d(p_0,p)=\sqrt{|x'|^2+|w_n^2|}(1+O(|w_n|)).$$
    So $|w_n|^2(1+O(|w_n|))\geq |x'|^2+|w_n|^2$ and we get 
    $|x'|^2=O(|w_n|^3)$. Let $t=-\log |w_n|^2$, then since $\tau(p_0)>\frac{2k}{a+1}$, we have $|w_n|=\epsilon(k)$ and $$\tau(p_0)=t+\epsilon(k).$$
    Clearly, if we add a fixed constant to the bound $t_a(0)-\frac{k}{2a^2}$ in proposition \ref{prop-hka}, the statement will not change. Then by proposition \ref{prop-upper-bound}, proposition \ref{prop-lower-bound} and the argument for lemma \ref{lem-concentrated}, we have proved 
    the theorem.
    
\end{proof}
The same argument together with proposition \ref{prop-rk-between} gives us the proof of theorem \ref{thm-main-neck}.
\begin{proof}[Proof of theorem \ref{thm-main-inside}]
    For $a<c_1\frac{\sqrt{k}}{\log k}$, there is a constant $C_3$ depending only on $c_1$ such that $|t_a(p)-(\frac{2k}{a}-\frac{B}{2}\log \frac{2k}{a})|<C_3$ for $k$ large enough. Since $$\Lambda_a(t)h_a(t)=(1+O(\frac{1}{(\log k)^2}))\Lambda_a(t_a')h_a(t_a'),$$
    for $|t-t_a'|<C_3$, we get that 
    $$\rho_{k+1}(p_0)=(1+O(\frac{1}{(\log k)^2}))\frac{2k^{3/2}}{\sqrt{\pi}a},$$
	for $p_0\in \Sigma_a$ by proposition \ref{prop-upper-bound}, proposition \ref{prop-lower-bound} and lemma \ref{lem-inside}.
    And by the argument for lemma \ref{lem-concentrated}, we have
		$$\rho_{k+1}(p_0)=\epsilon(k),$$
	for $p_0\in \Sigma'_a$. And by proposition \ref{prop-upper-bound} and formula \ref{for-most-in}, we get the last statement of the theorem.
    

\end{proof}

\section{inner shells}
In this section, we deal with the case of fixed $a$, namely $a=O(1)$. We keep our choice of the local coordinates, namely the one that makes 
\begin{itemize}
    \item $\tij=\delta_{ij}+O(|z|)$;
    \item $\tij(z',0)=\delta_{ij}+O(|z'|^2)$  for $i<n$ and $j<n$;
    \item $\phi_1(z',0)=\phi_1(0,0)+O(|z'|^2).$
\end{itemize}
So we can find local frame $e_D$, such that when $e_L=e_D\otimes(dz_1\wedge\cdots\wedge dz_n)$, the local potential $\psi=|z|^2+2\Re \sum_{a\leq b} Q_{abc}z_az_b\bar{z}_c + O(|z|^4)$. Then it is easy to see that when $Q_{abc}\neq 0$, we have $b=n$ or $c=n$. Therefore, we have $$\psi(z',0)=|z'|^2+O(|z|^4).$$
Recall that $H(0,0)=1$, so by \ref*{eqn-H}  we have $G(0,0)=1$. But we use a new choice of the metric $h_D$ on $[D]$. Since $\omega^n$ defines a metric on $K_X$, we let $h_D$ be defined by $$|e_D|^2_{h_D}=\frac{e^{-\psi}\omega^n}{n!|dz|^2}.$$
Of course, this choice of $h_D$ will not guarantee $c_{1,0}$ being constant, but we will not need this in this section. 
In particular, in the local coordinates we just chosen near $D$, we have $\phi(0,0)=0$ and $\phi_1(0,0)=0$.
We define two open sets: the polydisc $\Omega_k\subset \C^n$ defined by $|z'|<\frac{\log k}{\sqrt{k}} \text{and} |z_n|<\frac{\log k}{\sqrt{k}}$; and the open set $W_k\subset \Omega_k$ defined by $t>qk$, where $q=\frac{1}{10a+2}$. By abuse of notation, we will also use $W_k$ to denote the open set in $X$ parametrized by $W_k$.
Let $p_0=(z',z_n)\in W_k$. Then for any $p_1=(w',0)$, we have $d(p_0,p_1)=\sqrt{|z'-w'|^2+|z_n|^2}(1+O(|z'|+|w')+|z_n|)$. Therefore $$d(p_0,D)=|z_n|(1+O(|z'|+|z_n|)).$$ 
So we have
\begin{equation}\label{tau-sigma}
    \tau(p_0)=t+O(\frac{\log k}{\sqrt{k}})=\sigma(p_0)+O(\frac{\log k}{\sqrt{k}}).
\end{equation}

\

To understand $\gcal_{k,a}$, we need to, for each $s\in H^0(D,kL-aD)$, estimate the minimal extension in $\hcal_k$. 
 We recall the following theorem from \cite{Finski}(Theorem 4.4).
\begin{theo}\label{thm-finski}
    There are $c>0$ and positive integer $p_1$ such that for any $p\geq p_1$ and $g\in H^0(D, L^p\otimes F) $, there is $f\in H^0(X,L^p\otimes F)$, such that $f|_D=g$ and 
    $$\parallel f\parallel_{L^2(X)}^2\leq \frac{c}{p}\parallel g\parallel_{L^2(D)}^2$$
    
\end{theo}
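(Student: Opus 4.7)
The plan is to recognize Theorem \ref{thm-finski} as an Ohsawa--Takegoshi type $L^2$ extension theorem across the smooth divisor $D$, where the ambient positivity is provided by $L^p\otimes F$ (with $L$ ample and $p$ large) and the key $1/p$ factor reflects the $p$-fold scaling of the curvature $p\,\Theta_L+\Theta_F$. The proof has three standard ingredients: build a smooth extension of $g$ supported in a tubular neighborhood of $D$, correct it to a holomorphic section by solving a $\bar\partial$-equation via Hörmander's method with a carefully chosen weight, and arrange that the correction is forced to vanish on $D$ (so what we produce is a genuine extension of $g$, not merely a nearby section).

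For the smooth extension, I would use the normal-bundle exponential $\Exp: N_D(R)\to X$ and the projection $\pi_\omega$ from a tubular neighborhood $U$ of $D$ to $D$ to define $\tilde g = \pi_\omega^* g$ as a smooth section of $L^p\otimes F$ on $U$. Choose a cutoff $\chi$ with $\chi\equiv 1$ in a smaller neighborhood $U'\Subset U$, and set $s_0=\chi\tilde g$. Then $s_0|_D=g$, and
\begin{equation*}
\bar\partial s_0 = (\bar\partial\chi)\,\tilde g + \chi\,\bar\partial\tilde g,
\end{equation*}
where the first term is supported in $U\setminus U'$ (where $|s_D|$ is bounded below, hence a weight with a pole on $D$ does not blow up the $L^2$ norm there) and the second vanishes to first order on $D$ since $\tilde g$ is holomorphic along $D$ (bounded by $C|s_D|^2$ pointwise).

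Next I would apply Hörmander's $L^2$ estimate on $X$ with an Ohsawa--Takegoshi-type singular twisting weight of the form $\Phi = p\psi_L + \psi_F - \log\bigl(|s_D|^2(-\log|s_D|^2)^2\bigr)$, or the equivalent Berndtsson--Błocki--Guan--Zhou twisted Bochner--Kodaira version. The singular part of the weight guarantees that any $L^2$ solution $v$ of $\bar\partial v=\bar\partial s_0$ is automatically divisible by $s_D$, so $v|_D=0$ and $f:=s_0-v$ is a holomorphic section of $L^p\otimes F$ with $f|_D=g$. The $L^2$ bound on $\bar\partial s_0$ against the weight, after integrating out the normal disc direction via polar coordinates, reproduces (up to a universal constant) the $L^2(D)$ norm of $g$.

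The main obstacle is the curvature calculation that produces the explicit $c/p$ constant. In the twisted Bochner--Kodaira inequality the relevant curvature operator is $p\,\Theta_L+\Theta_F+\Ric(\omega) - (\text{twisting contribution})$, and for $p\geq p_1$ large enough the dominant term $p\,\Theta_L$ yields a strict Nakano-positivity lower bound of the form $c\,p\,\omega$. Dividing by this lower bound in Hörmander's estimate gives
\begin{equation*}
\|v\|_{L^2(X)}^2 \leq \frac{c'}{p}\,\|g\|_{L^2(D)}^2,
\end{equation*}
and the same bound for $\|f\|_{L^2(X)}^2$ follows because $\|s_0\|_{L^2(X)}^2$ is of the same order (the extension $\tilde g$ is supported in a shrinking neighborhood contributing only an $O(1/p)$ factor in the normal direction, once the weight is taken into account). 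The delicate point is choosing the twisting function so that the constant $c$ is genuinely uniform in $p$; this is precisely the content of the Ohsawa--Takegoshi--Manivel extension theorem, and the version stated is the $F$-twisted, $p$-scaled form used in \cite{Finski}.
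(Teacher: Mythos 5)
First, a point of comparison: the paper does not prove this statement at all --- it is recalled verbatim from Finski (Theorem 4.4 of \cite{Finski}), so your proposal has to be measured against that reference rather than against an argument in the text. Your outline is the right family of ideas (smooth extension via $\pi_\omega$, cutoff, $\bar\partial$-correction with a singular weight, curvature $p\,\Theta_L$ producing the $1/p$), and this is indeed how such semiclassical extension theorems are proved, but as written it has two genuine gaps.

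The first gap is the uniformity in $p$, which is the actual content of the theorem. With a cutoff $\chi$ equal to $1$ on a fixed $U'$ and supported in a fixed $U$, the error term $(\bar\partial\chi)\,\tilde g$ lives at distance of order one from $D$, and there the pointwise norm of $\tilde g=\pi_\omega^*g$ in the metric $h_L^p h_F$ equals $|g(\pi_\omega(z))|^2$ times $e^{-p(\psi(z)-\psi(\pi_\omega(z)))}$; the exponent is a fixed nonzero smooth function off $D$ with no sign control, so this factor can be exponentially large in $p$, and your bound $\|\bar\partial s_0\|^2=O(\parallel g\parallel^2_{L^2(D)})$ fails. Similarly, the claim $\parallel s_0\parallel^2_{L^2(X)}=O(\parallel g\parallel^2/p)$ needs the weight to grow quadratically in the normal direction along the support of $s_0$, which the naive pullback does not provide (there is in general a linear transverse term in the Taylor expansion of $p\psi$ relative to the pulled-back frame). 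The standard repair is a cutoff supported in a tube of radius of order $\sqrt{\log p/p}$ (so the weight comparison costs only a power of $p$, while the Gaussian decay compensates $|\bar\partial\chi|\sim\sqrt{p/\log p}$), combined with genuinely holomorphic local extensions rather than $\pi_\omega^*g$; this is exactly where the work in \cite{Finski} lies and your sketch passes over it. The second gap is that the weight you wrote does not force $v|_D=0$: with $\Phi=p\psi_L+\psi_F-\log\bigl(|s_D|^2(-\log|s_D|^2)^2\bigr)$ the factor entering $e^{-\Phi}$ is $|s_D|^2(\log|s_D|^2)^2$, which vanishes on $D$ (a sign slip), and even after correcting the sign the density $|s_D|^{-2}(\log|s_D|^2)^{-2}$ is locally integrable (it is the Poincar\'e cusp volume), so finiteness of the weighted norm of $v$ does not imply divisibility by $s_D$. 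To obtain a genuine extension one needs either the non-integrable weight $|s_D|^{-2}$ handled by the twisted Bochner--Kodaira/Ohsawa--Takegoshi machinery you mention only in passing, or an a posteriori argument on the restriction to $D$; plain H\"ormander with the log-log weight does not do it.
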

Let $F=a[D]$, with the induced hermitian metric from that of $[D]$. 
Given a section $s\in H^0(D,(k+1)L-a[D])$ with unit norm, we let $S\in H^0(X,(k+1)L-a[D])$ be the minimal extension of $s$. So we have
$$\int_X |S|^2 \omega^n\leq \frac{c_a}{k}\int_X |s|^2 \omega^{n-1},$$
where $c_a$ is a constant depending only on $a$. 
Then we define a section $\tilde{s}=S\otimes s_D^{\otimes a}\in H^0(X,(k+1)L)$. So by the identification of $L$ with $K_X$ on $X\backslash D$, we get a holomorphic pluri-canonical form $T(s)=\frac{\tilde{s}}{s_D^{\otimes (k+1)}}$ on $X\backslash D$.
Locally, recall that $e_L=e_D\otimes dz_1\wedge\cdots\wedge dz_n$, so if $S=F(e_L)^{\otimes (k+1)}\otimes (e_D^*)^{\otimes a}$, where $e_D^*$ is the dual frame of $e_D$, then $\tilde{s}$ is represented by $Fz_n^aG^a(e_L)^{\otimes (k+1)}$. So $T(s)=\frac{f}{(z_nG)^{k+1-a}}(dz_1\wedge\cdots\wedge dz_n)^{\otimes (k+1)}$. 
So its pointwise Cheng-Yau norm is 
$$|T(s)|^2_{CY}=\frac{|Fz_n^a|^2}{|G^{k+1-a}|^2}h_{CY}^{k+1}.$$
and $$|T(s)|^2_{CY}\frac{\omega_{KE}^n}{n!|dz|^2}=\frac{|Fz_n^{a-1}|^2}{|G^{k+1-a}|^2}h_{CY}^{k}.$$
Recall that $H(z',0)=\frac{e^{\psi(z',0)}}{|G(z',0)|^2}$.
So $$|T(s)|^2_{CY}\frac{\omega_{KE}^n}{n!|dz|^2}=|F|^2|Gz_n|^{2a-2}(\frac{|G(z',0)|^2}{|G(z)|^2})^{k}(Yt^2)^k e^{-k\psi(z',0)}$$

Clearly we have $\frac{G(z',z_n)}{G(z',0)}=1+O(|z_n|)$. Then by the compactness of $D$, we can find universal constants $R$ and $C_1$ so that $B_R(p)$, the geodesic ball of radius $R$ under the Riemannian metric associated to $\omega$, is contained in the coordinates chart centered at $p$ and that $|\frac{G(z',z_n)}{G(z',0)}-1|<C_1|z_n|$ within $B_R(p)$. Then on $U_k$, we have 
$$|T(s)|^2_{CY}\frac{\omega_{KE}^n}{n!|dz|^2}=(1+o(\frac{1}{k^2}))|F|^2|Gz_n|^{2a-2}h_0^k(z',t) e^{-k\psi(z',0)}.$$

We will need the following basic technical lemma.
\begin{lem}\label{lem-concave}
    Let $f(x)$ be a concave function. Suppose $f'(x_0)<0$, then we have
    $$\int_{x_0}^\infty e^{f(x)}dx\leq\frac{e^{f(x_0)}}{-f'(x_0)}$$
\end{lem}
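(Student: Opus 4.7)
The plan is to use the standard tangent-line upper bound provided by concavity. Since $f$ is concave and $f'(x_0)$ exists, $f$ lies below its tangent line at $x_0$: for every $x \geq x_0$ in the domain,
$$f(x) \leq f(x_0) + f'(x_0)(x - x_0).$$
Exponentiating preserves this inequality, so the integrand on the left-hand side is dominated pointwise by a pure exponential whose exponent is strictly decreasing in $x$ (since $f'(x_0)<0$).

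From here the computation is a direct evaluation. Substituting $u=x-x_0$,
$$\int_{x_0}^{\infty} e^{f(x)}\, dx \;\leq\; e^{f(x_0)} \int_0^{\infty} e^{f'(x_0)\, u}\, du \;=\; \frac{e^{f(x_0)}}{-f'(x_0)},$$
which is precisely the bound claimed. Convergence of the improper integral is built into the hypothesis $f'(x_0)<0$, so no additional care is needed to justify it.

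The only mildly technical point is the sense in which $f'(x_0)$ is meant if $f$ is not assumed smooth. For a concave function on an interval, the one-sided derivatives always exist in the interior, and the tangent-line bound $f(x)\le f(x_0)+f'_+(x_0)(x-x_0)$ for $x\ge x_0$ holds with the right derivative; so if $f$ fails to be differentiable at $x_0$ one simply reads $f'(x_0)$ as $f'_+(x_0)$ throughout. There is no real obstacle in this lemma — it is essentially a one-line consequence of concavity, recorded here for repeated use (e.g.\ to control tails of $e^{g_a(t)}$ far from the maximum $t_a$ in the earlier fiberwise estimates).
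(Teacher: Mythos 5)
Your proof is correct and is exactly the standard argument; the paper in fact omits a proof of this lemma entirely, treating it as immediate, and your tangent-line-plus-integrate computation is the one-line justification it implicitly relies on. The remark about reading $f'(x_0)$ as the right-hand derivative $f'_+(x_0)$ in the non-smooth case is accurate but unnecessary here, since in the paper's application $f=g_a$ is smooth.
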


\

Write $F(z)=\sum F_i(z')z_n^i$ as a power series of $z_n$.  We denote by 
$$Q(z')=\int_{|z_n|<\frac{\log k}{\sqrt{k}}}|F(z',z_n)|^2e^{-(k+1)\psi+a\phi_1}\frac{\omega^n}{n!|dz'|^2}.$$

Since $\frac{\omega^n}{n!}=(1+O(|z|))|dz|^n$, we have $$Q(z')>(1-\nu)e^{-(k+1)|z'|^2+a\phi_1(0,0)}\int_{|z_n|<\frac{\log k}{\sqrt{k}}}|F(z',z_n)|^2e^{-k|z_n|^2}\sqrt{-1}dz_n\wedge d\bar{z}_n,$$
where $\nu$ is a quantity of the size $O(\frac{(\log k)^3}{\sqrt{k}})$.
And $$\int_{|z_n|<\frac{\log k}{\sqrt{k}}}|F(z)|^2e^{-k|z_n|^2}\sqrt{-1}dz_1\wedge d\bar{z}_1=2\pi \sum |F_i|^2\int_{r<\frac{\log k}{\sqrt{k}}} 2e^{-kr^2}r^{2i+1}dr$$
$w_i=\int_{r<\frac{\log k}{\sqrt{k}}} 2e^{-kr^2}r^{2i+1}dr=\int_0^\frac{(\log k)^2}{k} e^{-kx}x^{i}dx$. So we have 

\begin{eqnarray*}
    2\pi \sum w_id_i&<&2\int_{\Omega_k}|F|^2e^{-(k+1)\psi+a\phi}|dz|^2\\
    &<&\frac{2c_a}{k+1},
\end{eqnarray*}
where $d_i=e^{a\phi(0,0)}\int_{|z'|<\frac{\log k}{\sqrt{k}}}|F_i|^2e^{-(k+1)|z'|^2}|dz'|^2$. Then we estimate $w_i$. Letting $y=kx$, we get $w_i=\frac{1}{k^{i+1}}\int_0^{(\log k)^2}y^ie^{-y}dy$. Since $-y+i\log y$ is a concave function of $y$, whose only maximum is attained at $y=i$, one can see that for $i<(\log k)^2-\log k$, we have $\frac{1}{2}<\frac{k^{i+1}w_i}{i!}<\frac32$. For $i\geq (\log k)^2-\log k$, we have that $y^ie^{-y}$ is increasing on the interval $[0,(\log k)^2-\log k]$. So we have 
$w_i\geq \frac{1}{k^{i+1}}((\log k)^2-2\log k)^ie^{-(\log k)^2+2\log k}$.

\
Then we consider the integral of $|T(s)|^2_{CY}\frac{\omega_{KE}^n}{n!|dz|^2}$ along the $z_n$-disks with euclidean measure $\sqrt{-1}dz_n\wedge d\bar{z}_n$.

Let $U(z')=\int_{\log |z_n|^2<\phi_1(z',0)-qk} |F|^2r^{2a-2}h_0^k(t)\sqrt{-1}dz_n\wedge d\bar{z}_n$, then
\begin{eqnarray*}
    U(z')&=&2\pi \sum |F_i|^2 \int_{\log |z_n|^2<\phi(z')-qk}h_0^k(t)|r|^{2i+2a-2} 2rdr\\
    &=&2\pi \sum |F_i|^2 \int_{qk-\phi(z',0)}^\infty h_0^k(t)e^{-(i+a)t} dt
\end{eqnarray*}
Let
$y_i(z')=\int_{qk-\phi(z',0)}^\infty h_0^k(z',t)e^{-(i+a)t} dt$.

Clearly $y_0(z')=(1+\epsilon(k))J_a(z')$, so we have that $$y_0(z')=(1+O(k^{-\epsilon}))(\frac{2k}{a})^{2k+Ba/2}\frac{2}{a}\sqrt{k\pi }e^{-2k+Ba-Aa/2},$$
and for $i< 20a+2$, $$\frac{y_i}{y_0}(z')<2(\frac{a}{a+i})^{2k+1+Ba/2}e^{Bi-Ai/2}(a+i)^{-Bi/2}.$$ For $i\geq 20a+2$, we can use lemma \ref{lem-concave} to get 
$$y_i(z')\leq \frac{2}{i-10a}(qk)^{2k}e^{-\frac{i+a}{10a+4}k}.$$
In the next lemma and its proof, for simplicity of the notations, we ignore the dependence of $y_i$ on $z'$, since the term $A$ does not change much for $|z'|<\sqrt{k}\log k$.
\begin{lem}
    We have $\frac{y_iw_0}{y_0w_i}\leq \epsilon(k)$, where the quantity $\epsilon(k)$ can be chosen independent of $i$.
\end{lem}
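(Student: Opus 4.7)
The plan is to bound the ratio uniformly in $i\geq 1$ by splitting the range of $i$ into three regimes and matching the estimate for $y_i/y_0$ displayed above with a matching lower bound on $w_i$. Note that $w_0 = \frac{1}{k}(1-e^{-(\log k)^2})$, so $w_0 = O(1/k)$.

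First, for $1\leq i < 20a+2$, I would use the explicit small-$i$ bound $\frac{y_i}{y_0}\leq 2(\frac{a}{a+i})^{2k+1+Ba/2}e^{Bi-Ai/2}(a+i)^{-Bi/2}$. Since $a=O(1)$ and $i\geq 1$, the factor $(\frac{a}{a+i})^{2k}\leq (\frac{a}{a+1})^{2k}=e^{-c_0 k}$ for some $c_0>0$, and the remaining factors are polynomial in $k$. On the $w$-side, $i<20a+2=O(1)$ gives $w_i \geq \frac{i!}{2k^{i+1}}$, so $\frac{w_0}{w_i}\leq \frac{2k^i}{i!}\leq C k^{O(1)}$. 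The product is $\leq Ck^{O(1)}e^{-c_0 k}=\epsilon(k)$.

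For the intermediate regime $20a+2\leq i \leq (\log k)^2 -\log k$, I would invoke the large-$i$ tail estimate $y_i\leq \tfrac{2}{i-10a}(qk)^{2k}e^{-\frac{i+a}{10a+4}k}$ together with $y_0\geq c'(2k/a)^{2k+Ba/2}\sqrt{k}e^{-2k}$, giving
\begin{equation*}
\frac{y_i}{y_0}\leq C\Bigl(\frac{qae}{2}\Bigr)^{2k}e^{-\frac{i+a}{10a+4}k}k^{-Ba/2-1/2}.
\end{equation*}
Since $\frac{qae}{2}=\frac{ae}{2(10a+2)}\leq \frac{e}{20}<1$, this is at worst $Ce^{-c_1 k}e^{-\frac{i}{10a+4}k}$. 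Still using $\frac{w_0}{w_i}\leq \frac{2k^i}{i!}$, Stirling gives $\frac{k^i}{i!}\leq (\frac{ek}{i})^i$, and the combined factor contributes at most $(\frac{ek}{i}e^{-k/(10a+4)})^i$. For $k$ large this quantity is $\leq 1$ (uniformly for all $i\geq 1$), so $\frac{y_i w_0}{y_0 w_i}\leq Ce^{-c_1 k}=\epsilon(k)$.

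For the remaining tail $i > (\log k)^2-\log k$, only the weaker bound $w_i\geq k^{-i-1}((\log k)^2-2\log k)^i e^{-(\log k)^2+2\log k}$ is available, so $\frac{w_0}{w_i}\leq (\frac{k}{(\log k)^2-2\log k})^i e^{(\log k)^2}$. The $y_i/y_0$ bound still contributes $Ce^{-c_1 k-\frac{i}{10a+4}k}$. Combining, the $i$-dependent factor becomes $(\frac{k}{(\log k)^2-2\log k}e^{-k/(10a+4)})^i$, which is $\leq 1$ for $k$ large, leaving $Ce^{-c_1 k}e^{(\log k)^2}=\epsilon(k)$. The main obstacle lies precisely in this last regime, where $w_i$ is controlled only by the non-sharp exponential lower bound; the argument goes through because the decay $e^{-ik/(10a+4)}$ coming from the tail of $y_i$ overwhelms the potentially large factor $(k/(\log k)^2)^i$ as soon as $k$ is large enough, and crucially this crossover happens independently of $i$, giving the claimed uniform $\epsilon(k)$.
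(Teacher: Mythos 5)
Your proof is correct and follows essentially the same route as the paper's: the same three-way split at $i=20a+2$ and $i=(\log k)^2-\log k$, the same tail bound $y_i\leq\frac{2}{i-10a}(qk)^{2k}e^{-(i+a)k/(10a+4)}$, and the same lower bounds on $w_i$ in each regime. You merely make the bookkeeping in the middle and large-$i$ regimes more explicit by isolating the $i$-dependent factor $(\tfrac{ek}{i}e^{-k/(10a+4)})^i$ (resp.\ $(\tfrac{k}{(\log k)^2-2\log k}e^{-k/(10a+4)})^i$) and noting it is $\leq 1$, whereas the paper compresses this into a single ``take logs and observe'' step.
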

\begin{proof}
    For $i<20a+2$, we have 
    $$\frac{y_iw_0}{y_0w_i}< \frac{4k^i}{i!}(\frac{a}{a+i})^{2k+1+Ba/2}e^{Bi-Ai/2}(a+i)^{-Bi/2}=\epsilon(k).$$
    For $(\log k)^2-\log k>i\geq 20a+2$, we have
    $$\frac{y_iw_0}{y_0w_i}< \frac{2a^{Ba/2+1}k^i}{(i-10a)i!\sqrt{k\pi}}(\frac{a}{20a+4})^{2k}e^{2k-Ba+Aa/2-\frac{i+a}{10a+4}k}.$$
    We look at $i\log k-\log( i!)-2k\log 20+2k-\frac{i+a}{10a+4}k$. It is then easy to see that for $k$ large enough, $\frac{y_iw_0}{y_0w_i}=\epsilon(k)$. 
    For $i\geq (\log k)^2-\log k$, we have
    $$\frac{y_iw_0}{y_0w_i}< \frac{k^{i}}{(20)^{2k}((\log k)^2-2\log k)^i}e^{2k-\frac{i+a}{10a+4}k+(\log k)^2-2\log k}.$$
    Again, by taking logarithm again, it is easy to see that $\frac{y_iw_0}{y_0w_i}\leq \epsilon(k)$ with a quantity $\epsilon(k)$ independent of $i$. 
\end{proof}
Therefore, $$\sum_{i\geq 1}y_i(0)d_i=\epsilon(k)y_0(0).$$ 

\

In the following, unless stated otherwise, we will assume that $s$ is supported in one component $D_\alpha$. And by $B$ we will mean $B_\alpha$.

So we have proved the following:
\begin{lem}\label{lem-wk}
    Let $\nu(p)=e^{-a\phi_1(0,0)}(\frac{2k}{a})^{2k+Ba/2}\frac{2}{a}\sqrt{k\pi }e^{-2k+Ba-A(0)a/2},$ then
    $$\int_{W_k}|\sum_{i=1}^{\infty}F_iz_n^i|^2|Gz_n|^{2a-2}h_0^k(z',t) e^{-k\psi(z',0)}=\epsilon(k)\nu(p).$$
    So
    $$\int_{W_k}|T(s)|^2_{CY}\frac{\omega_{KE}^n}{n!}=(1+O(k^{-\epsilon}))\nu(p)[\int_{z'<\sqrt{k}\log k}|s|^2\frac{\omega^n}{n!}+\epsilon(k)].$$
\end{lem}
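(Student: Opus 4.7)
The plan is to use the pointwise identity
\[
|T(s)|_{CY}^{2}\frac{\omega_{KE}^{n}}{n!\,|dz|^{2}}=(1+o(k^{-2}))|F|^{2}|Gz_{n}|^{2a-2}h_{0}^{k}(z',t)\,e^{-k\psi(z',0)}
\]
established just above the lemma on $U_{k}\supset W_{k}$, together with the power-series decomposition $F(z',z_{n})=\sum_{i\ge 0}F_{i}(z')z_{n}^{i}$. I would integrate first in $z_{n}$ (angular integration kills all cross terms $i\neq j$), then in $z'$, and finally plug in the two key inputs already produced in the excerpt: the estimate $\sum_{i\ge 1}y_{i}(0)d_{i}=\epsilon(k)y_{0}(0)$ and the explicit formula for $y_{0}(0)$, which in our normalization $\phi_{1}(0,0)=0$ gives $y_{0}(0)=(1+O(k^{-\epsilon}))\nu(p)$.

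For the first equation, after angular integration in $z_{n}$ on the slice $\{qk<t\}\cap\{|z_{n}|<\log k/\sqrt k\}$ and using $|G(z',z_{n})|^{2a-2}=1+O(|z|)$ on $W_{k}$, the left-hand side becomes
\[
2\pi\sum_{i\ge 1}\int_{|z'|<\log k/\sqrt k}|F_{i}(z')|^{2}\,y_{i}(z')\,e^{-k\psi(z',0)}\,(1+O(|z|))\,|dz'|^{2}.
\]
In our chosen coordinates $\phi_{1}(z',0)=O(|z'|^{2})$ and $A(z')=A(0)+O(|z'|^{2})$, so the ratio $y_{i}(z')/y_{i}(0)$ is $1+O((\log k)^{2}/k)$ uniformly in $i$, and $e^{-k\psi(z',0)}=e^{-k|z'|^{2}}(1+O((\log k)^{4}/k))$. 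Hence the tail reduces to $(1+o(1))\cdot 2\pi\sum_{i\ge 1}y_{i}(0)d_{i}$, which the estimate $\sum_{i\ge 1}y_{i}(0)d_{i}=\epsilon(k)y_{0}(0)$ turns into $\epsilon(k)\nu(p)$.

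For the second equation, the $i=0$ contribution of the same decomposition gives
\[
(1+O(k^{-\epsilon}))\cdot 2\pi\,y_{0}(0)\int_{|z'|<\log k/\sqrt k}|F_{0}(z')|^{2}|G(z',0)|^{2a-2}e^{-k\psi(z',0)}\,|dz'|^{2},
\]
since $y_{0}(z')/y_{0}(0)=1+O(k^{-\epsilon})$ on the disc. I would then unwind the local trivializations: the restriction $s=S|_{D}$ is represented by $F_{0}(z')$ in the frame $(e_{L}|_{D})^{\otimes(k+1)}\otimes(e_{D}^{*}|_{D})^{\otimes a}$; from $|e_{L}|_{h}^{2}=e^{-\psi}$, $|e_{D}|_{h_{D}}^{2}=e^{-\psi}\omega^{n}/(n!|dz|^{2})$, $\omega_{D}^{n-1}/(n-1)!=H(z',0)|dz'|^{2}$, and the normalizations $\psi(z',0)=|z'|^{2}+O(|z|^{4})$, $H(z',0)=1+O(|z'|^{2})$, $|G(z',0)|^{2}=1+O(|z'|^{2})$, $\phi_{1}(0,0)=0$ imposed at the start of the section, one checks that
\[
|F_{0}(z')|^{2}|G(z',0)|^{2a-2}e^{-k\psi(z',0)}|dz'|^{2}=(1+O(k^{-\epsilon}))\,|s(z')|^{2}\,\omega_{D}^{n-1}/(n-1)!
\]
on the disc of integration. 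Combining with $y_{0}(0)=(1+O(k^{-\epsilon}))\nu(p)$ and adding back the tail bound from the first equation yields the second equation.

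The hard part is this last bookkeeping step: keeping track of every local trivialization of $L$, $[D]$, $K_{X}$, of their hermitian metrics, and of the identification $L|_{D}\cong K_{D}$, and verifying that all the coefficients coming from $G$, $H$, $\psi$ and $\phi_{1}$ conspire to cancel to leading order. The highly specific choice of coordinates and of $h_{D}$ made at the opening of the section---designed so that $\psi$, $H$, $\phi_{1}$ and $|G|^{2}$ all have trivial first-order Taylor expansions at $z'=0$---is precisely what lets those corrections collapse into a single $1+O(k^{-\epsilon})$ factor; any less adapted normalization would introduce linear-in-$|z'|$ errors that, after multiplication by $k$ in the exponent, would be of order $\sqrt k\log k$ and would destroy the estimate.
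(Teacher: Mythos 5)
Your reconstruction is correct and follows exactly the route the paper intends: the paper's displayed ``proof'' is a one-line reminder that $G(0,0)=1$ forces $\phi(0,0)=\phi_1(0,0)=0$, and the substantive work is contained in the paragraphs immediately preceding the lemma (the pointwise identity for $|T(s)|^{2}_{CY}\omega_{KE}^{n}/(n!|dz|^{2})$, the $L^{2}$-extension bound giving $2\pi\sum w_{i}d_{i}<2c_{a}/(k+1)$, the $y_{i}$-analysis, and the key estimate $\sum_{i\geq 1}y_{i}(0)d_{i}=\epsilon(k)y_{0}(0)$ together with the explicit formula for $y_{0}$), which you have reassembled faithfully, including the trivialization bookkeeping for the $i=0$ term via $H(z',0)=e^{\psi(z',0)}/|G(z',0)|^{2}$ and $\phi_1=\phi-\log|G|^2$. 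One small overstatement: you claim $y_{i}(z')/y_{i}(0)=1+O((\log k)^{2}/k)$ uniformly, but in Section 6 the metric $h_D$ is re-chosen so that $c_{1,0}$ is \emph{not} constant, hence $A(z')=A(0)+O(|z'|)$ rather than $O(|z'|^{2})$, and the resulting control is only $1+O(\log k/\sqrt k)$; this is still $1+O(k^{-\epsilon})$, so your conclusion is unaffected.
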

\begin{proof}
    The reader only need to be reminded that since $G(0,0)=1$, $\phi(0,0)=\phi_1(0,0)$.
\end{proof}
When we vary $p$ in $D$, $A$ and $\phi_1(p)=\phi_1(0,0)$ are uniformly bounded. Therefore, for any $p_1,p_2\in D$, $t_a(p_1)-t_a(p_2)=O(1)$.
For $t>\sqrt{k}\log k$, we have $\frac{h_0(t_a(p_1))}{h_0(t_a(p_2))}=1+O((\frac{2k}{a})^{1+\epsilon})$. So 
$$\frac{J_a(p_1)}{J_a(p_2)}=e^{O(a)}.$$
We fix a neighborhood $N_\alpha$ for each $D_\alpha$ and we require that $N_\alpha\cap N_\beta=\empty$ when $\alpha\neq \beta$. Let $s$ be supported in $D_\alpha$. 
Then we have 
\begin{prop}\label{prop-tsqk}
    There are positive constants $c_1(a), c_2(a)$ such that the quotient
    $$\xi=(\int_{\{\sigma>qk\}\cap N_\alpha}|T(s)|^2_{CY}\frac{\omega_{KE}^n}{n!})/((\frac{2k}{a})^{2k+Ba/2}\frac{2}{a}\sqrt{k\pi }e^{-2k})$$
    satisfies 
    $$c_1(a)<\xi<c_2(a).$$
\end{prop}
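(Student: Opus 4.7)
The strategy is to globalize the local estimate of Lemma~\ref{lem-wk} by partitioning $D_\alpha$ into many small measurable pieces, applying the lemma on the vertical tube above each piece, and summing. The crucial observation is that the prefactor $e^{-a\phi_1(0,0)+Ba-A(0)a/2}$ in $\nu(p)$ is a continuous function of $p\in D_\alpha$, so by compactness it is bounded above and below by positive constants $c_1'(a),c_2'(a)$ depending only on $a$ and the fixed geometric data.

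First I would partition $D_\alpha$ into disjoint measurable pieces $\{V_\ell\}$, each of diameter much smaller than $\frac{\log k}{\sqrt k}$ and each contained in an admissible coordinate chart centered at some $p_\ell\in V_\ell$ (of the type constructed at the beginning of the section). Write $\tilde W_\ell$ for the part of the tubular neighborhood $\{\sigma>qk\}\cap N_\alpha$ lying above $V_\ell$ in those coordinates; by \eqref{tau-sigma} the $\tilde W_\ell$ partition $\{\sigma>qk\}\cap N_\alpha$ up to measure zero, and each $\tilde W_\ell$ is contained in the corresponding local $W_k$. On each piece, the proof of Lemma~\ref{lem-wk} applies verbatim with the inner $z'$-integration restricted to $V_\ell$ instead of the whole disc $|z'|<\tfrac{\log k}{\sqrt k}$, giving
\begin{equation*}
\int_{\tilde W_\ell} |T(s)|^2_{CY}\,\tfrac{\omega_{KE}^n}{n!}
= (1+O(k^{-\epsilon}))\,\nu(p_\ell)\int_{V_\ell}|s|^2\,\tfrac{\omega_D^{n-1}}{(n-1)!}.
\end{equation*}

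Summing over $\ell$ and using the continuity of $\phi_1$ and $A$ to replace $\nu(p_\ell)$ by $\nu(p)$ pointwise on $V_\ell$ (with negligible error, since $V_\ell$ has small diameter), I would obtain
\begin{equation*}
\int_{\{\sigma>qk\}\cap N_\alpha}|T(s)|^2_{CY}\,\tfrac{\omega_{KE}^n}{n!}
= (1+O(k^{-\epsilon}))\Bigl(\tfrac{2k}{a}\Bigr)^{2k+Ba/2}\tfrac{2}{a}\sqrt{k\pi}\,e^{-2k}\cdot\!\int_{D_\alpha}e^{-a\phi_1+Ba-Aa/2}|s|^2\tfrac{\omega_D^{n-1}}{(n-1)!}.
\end{equation*}
Since the weight $e^{-a\phi_1+Ba-Aa/2}$ lies between $c_1'(a)$ and $c_2'(a)$ and $\int_{D_\alpha}|s|^2\omega_D^{n-1}/(n-1)!=1$, the quotient $\xi$ is trapped between constants of the form $c_1(a)=\tfrac12 c_1'(a)$ and $c_2(a)=2c_2'(a)$ for all $k$ sufficiently large.

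The main obstacle I foresee is ensuring that the accumulation of errors as one sums over the $\sim k^{n-1}/(\log k)^{2(n-1)}$ pieces remains harmless. Multiplicatively, the $(1+O(k^{-\epsilon}))$ factor in each piece combines with the partition to preserve the overall relative error, since it multiplies the main term pointwise. The additive $\epsilon(k)$ contributions in Lemma~\ref{lem-wk} must be handled by tracking their absolute size: since $\epsilon(k)$ decays faster than any polynomial, the total additive error, even after multiplication by the polynomial number of pieces, remains negligible compared to the dominant term. The secondary subtlety is that the local coordinates chosen at different $p_\ell$ must all satisfy the normalizations recalled at the start of the section; this uniform choice is possible by a standard compactness argument on $D_\alpha$.
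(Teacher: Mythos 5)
Your proposal is correct and takes essentially the same approach that the paper implicitly uses: cover $\{\sigma>qk\}\cap N_\alpha$ by vertical tubes over small pieces of $D_\alpha$, apply Lemma~\ref{lem-wk} on each tube, sum, and observe that the ratio $\nu(p)/\bigl((\tfrac{2k}{a})^{2k+Ba/2}\tfrac{2}{a}\sqrt{k\pi}e^{-2k}\bigr)=e^{-a\phi_1(p)+Ba-A(p)a/2}$ is uniformly bounded above and below by constants depending only on $a$, which is exactly what the paragraph preceding the proposition records via the estimate $J_a(p_1)/J_a(p_2)=e^{O(a)}$. Your explicit bookkeeping of the multiplicative $(1+O(k^{-\epsilon}))$ errors and the polynomially-many additive $\epsilon(k)$ terms is the right way to make the tube-summation rigorous.
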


To control the norm of $T(s)$ on the points further away from $D$, we use H\"{o}rmander's $L_2$ estimates again. Firstly, we notice:
\begin{lem}
    $$\int_{0<\sigma-qk<2}|T(s)|^2_{CY}\frac{\omega_{KE}^{n}}{n!}=\epsilon(k)(\frac{2k}{a})^{2k+Ba/2}e^{-2k}.$$
\end{lem}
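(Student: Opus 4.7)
The plan is to use the pointwise formula
$$|T(s)|^2_{CY}\frac{\omega_{KE}^n}{n!|dz|^2}=(1+o(k^{-2}))|F|^2|Gz_n|^{2a-2}h_0^k(z',t)\,e^{-k\psi(z',0)},$$
which remains valid on the strip $\{qk<\sigma<qk+2\}$: there $t\approx qk\gg k^{1/3}$ (so the $h_{CY}^k$-versus-$h_0^k$ approximation applies) and $|z_n|=e^{-t/2}$ is microscopically small, keeping us well inside each geodesic ball $B_R(p)$. Cover $D_\alpha$ by finitely many coordinate charts of the special form used throughout this section, and in each chart expand $F(z)=\sum_{i\geq 0}F_i(z')z_n^i$. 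After angle integration in $z_n$ kills cross-terms, the radial integration over $t\in[qk-\phi_1(z',0),\,qk-\phi_1(z',0)+2]$ leaves the factor
$$y_i^{\operatorname{sh}}(z'):=\int_{qk-\phi_1(z',0)}^{qk-\phi_1(z',0)+2}h_0^k(z',t)\,e^{-(i+a)t}\,dt.$$

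The heart of the proof is the uniform-in-$i$ bound $y_i^{\operatorname{sh}}(z')\leq \epsilon(k)\,w_i\,(2k/a)^{2k+Ba/2}e^{-2k}$, which is exactly where the choice $q=\tfrac{1}{10a+2}$ pays off. The exponent $k\log h_0(z',t)-(i+a)t$ is strictly concave (the analysis of $g''_a$ from the earlier section), so the length-$2$ integral is at most $2$ times the endpoint value, yielding $y_i^{\operatorname{sh}}\leq C(qk)^{2k}e^{-(i+a)qk}$. Therefore
$$\frac{y_i^{\operatorname{sh}}}{w_i\,(2k/a)^{2k+Ba/2}e^{-2k}}\leq \frac{C}{w_i}\left(\frac{qa}{2}\right)^{2k}e^{2k-(i+a)qk}(2k/a)^{-Ba/2},$$
and since $qa/2\leq 1/20$, the factor $(qa/2)^{2k}\leq 20^{-2k}$ is hard-exponentially small. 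For $i\leq(\log k)^2-\log k$ one uses $w_i\geq i!/(2k^{i+1})$: Stirling together with $qk\gg\log k$ beats $k^{i+1}/i!$. For $i>(\log k)^2-\log k$ one uses the explicit $w_i\geq((\log k)^2-2\log k)^i e^{-(\log k)^2+2\log k}/k^{i+1}$, and the decay $e^{-iqk}$ with $qk\gg i\log\log k$ still dominates.

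With the uniform bound in hand, combine it with the $L^2$-extension estimate $2\pi\sum_i w_i d_i\leq 2c_a/(k+1)$ (where $d_i=e^{a\phi(0,0)}\int_{|z'|<\log k/\sqrt{k}}|F_i|^2 e^{-(k+1)|z'|^2}|dz'|^2$) coming from Theorem~\ref{thm-finski}, and the approximation $e^{-k\psi(z',0)}=(1+o(1))e^{-k|z'|^2}$. The shell integral in one chart, restricted to $|z'|<\log k/\sqrt{k}$, is then bounded by
$$2\pi(1+o(1))\sum_i y_i^{\operatorname{sh}}(z')\,d_i\leq \epsilon(k)(2k/a)^{2k+Ba/2}e^{-2k}\cdot 2\pi\sum_i w_i d_i\leq\epsilon(k)(2k/a)^{2k+Ba/2}e^{-2k}.$$
The tail $|z'|>\log k/\sqrt{k}$ is absorbed by the global bound $\int_X|S|^2\omega^n\leq c_a/k$ via the standard peaking argument for $e^{-k|z'|^2}$, and summing over the finite cover of $N_\alpha$ completes the proof.

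The main obstacle is the uniform-in-$i$ estimate on $y_i^{\operatorname{sh}}/w_i$, since $w_i$ has different asymptotics below and above $i\sim(\log k)^2$; however, in every regime the hard-exponential factor $(qa/2)^{2k}e^{-iqk}$ overwhelms all polynomial and factorial corrections, which is precisely the reason $q$ was taken as small as $1/(10a+2)$.
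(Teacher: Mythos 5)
Your proof is correct and reaches the same estimate, but by a more self-contained (and longer) route than the paper's. The paper's one-line proof observes only that the \emph{fiberwise} integral of the leading ($i=0$) term $h_0^k(t)e^{-at}$ over the strip is $\epsilon(k)y_0$, and implicitly delegates the contributions of $i\geq 1$ to the immediately preceding Lemma~\ref{lem-wk}: since those terms already contribute only $\epsilon(k)\nu(p)$ to the integral over all of $W_k$, a fortiori they contribute $\epsilon(k)\nu(p)$ over the thin strip. You instead rederive a uniform-in-$i$ bound $y_i^{\operatorname{sh}}(z')\leq \epsilon(k)\,w_i\,(2k/a)^{2k+Ba/2}e^{-2k}$ directly on the strip, effectively replaying the three-regime analysis of $w_i$ from the proof of Lemma~\ref{lem-wk}. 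The trade-off is transparency versus economy: your argument makes explicit where $q=\frac{1}{10a+2}$ earns its keep, via the hard-exponential factor $(qa/2)^{2k}\leq 20^{-2k}$, at the cost of redoing case analysis the paper reuses for free.

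One small imprecision: you justify $y_i^{\operatorname{sh}}\leq C(qk)^{2k}e^{-(i+a)qk}$ by saying the length-$2$ integral of $e^{g}$ ($g$ concave) is ``at most $2$ times the endpoint value.'' Concavity bounds it by twice the \emph{supremum} over the interval, and for $i\approx 19a+4$ the critical point $t_{i+a}\approx 2k/(i+a)$ falls \emph{inside} $[qk,qk+2]$, so the supremum is interior rather than at an endpoint (while for small $i$ it sits at the right endpoint $qk+2$, not $qk$). The stated bound nevertheless holds: for every $t$ in the strip one has $h_0^k(t)e^{-(i+a)t}\leq(qk+2)^{2k}e^{O(k\log(qk)/(qk))}e^{-(i+a)qk}=(qk)^{2k}\,k^{O(1/q)}\,e^{-(i+a)qk}$, and the polynomial correction $k^{O(1/q)}$ is swallowed by $(qa/2)^{2k}$, exactly as you remark at the end.
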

\begin{proof}
    One only need to notice that on each $z_n$ the disk, the integral of $h_0^k(t)e^{-at}$ on the part where $0<\sigma-\frac{k}{10a+2}<2$ is $\epsilon(k)y_0$. 
\end{proof}

Let $\chi(y)$ be a smooth function satisfying the following
\begin{itemize}
    \item $\chi(y)=1$ for $y>2$;
    \item  $\chi(y)=0$ for $y<0$;
    \item $0\leq \chi'(y)\leq 1$ for $y>2$.
\end{itemize}

We define a function $\zeta$ such that $\zeta=\chi(\sigma-qk)$ on $N_\alpha$ and $\zeta=0$ on the complement of $N_\alpha$. Then $\zeta$ is smooth when $k$ is large enough.
Then the function $\zeta$ satisfies
$\dbar \zeta=\chi'(\sigma-qk)\dbar \sigma$. So 
$$|\dbar \zeta|^2\leq |\dbar \sigma|^2.$$
Then since $\omega_{KE}$ is equivalent to $\omega_{CG}$, we have
$|\dbar \sigma|^2=O(\sigma^2)$. Therefore,
$$\dbar \zeta=O(\sigma^2).$$

So we can solve the equation $$\dbar v=\dbar \zeta\otimes T(s) $$
and get 
\begin{eqnarray*}
    \int_X |v|^2\omega_{KE}^n&\leq& \frac{2}{k}\int_{\sigma>qk}|\dbar \zeta|^2|T(s)|_{CY}^2\omega_{KE}^n\\
    &\leq&\frac{2}{k}\int_{qk+2>\sigma>qk}\sigma^2|T(s)|_{CY}^2\omega_{KE}^n\\
    &\leq&\frac{2(qk+2)^2}{k}\int_{qk+2>\sigma>qk}|T(s)|_{CY}^2\omega_{KE}^n\\
    &=&\epsilon(k)(\frac{2k}{a})^{2k+Ba/2}e^{-2k}.
\end{eqnarray*}

So $E(s)=\zeta T(s)-v$ is a holomorphic section of $K_X^{k+1}$ on $\xmd$ satisfying
$$\parallel E(s)\parallel^2_{CY}=(1+\epsilon(k))\int_{\{\sigma>qk\}\cap N_\alpha}|T(s)|^2_{CY}\frac{\omega_{KE}^n}{n!}.$$ And we can require $v$ to be the minimal solution, namely $v$ is orthogonal to the holomorphic sections. With this requirement, one sees that $E(s)$ is a linear map.

We have the following
\begin{lem}\label{lem-es}
    Let $V_{\alpha,a}$ denote the open set $\{\sigma>\frac{2k}{a}+\frac{\sqrt{k}\log k}{a}\}\cup\cup_{\beta\neq \alpha}N_\beta$.
    We have
    \begin{equation}\label{eqn-Es-u}
        \int_{V_{\alpha,a}}|E(s)|^2\omega_{KE}^n=\epsilon(k)\parallel E(s)\parallel^2_{CY}
    \end{equation}
    \begin{equation}\label{eqn-Es-l}
        \int_{\sigma<\frac{2k}{a}-\frac{\sqrt{k}\log k}{a}}|E(s)|^2\omega_{KE}^n=\epsilon(k)\parallel E(s)\parallel^2_{CY}
    \end{equation}
    
\end{lem}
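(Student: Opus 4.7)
The plan is to use the elementary decomposition $|E(s)|^2\leq 2|\zeta T(s)|^2+2|v|^2$ together with the global estimate
$$\int_X |v|^2_{CY}\,\omega_{KE}^n=\epsilon(k)\bigl(\tfrac{2k}{a}\bigr)^{2k+Ba/2}e^{-2k}$$
already obtained via H\"ormander's theorem, and the two-sided comparison $\|E(s)\|^2_{CY}\asymp \bigl(\tfrac{2k}{a}\bigr)^{2k+Ba/2}\tfrac{2}{a}\sqrt{k\pi}\,e^{-2k}$ from Proposition \ref{prop-tsqk}, in order to reduce both \eqref{eqn-Es-u} and \eqref{eqn-Es-l} to estimates for the $L^2$-mass of $|\zeta T(s)|^2$ on the region $R$ of interest. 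The $v$-contribution is automatically $\epsilon(k)\|E(s)\|^2_{CY}$ regardless of $R$, so only the $\zeta T(s)$ piece needs real work.

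For \eqref{eqn-Es-u}, I decompose $V_{\alpha,a}$ into its piece inside $N_\alpha$ (where $\sigma>\tfrac{2k}{a}+\tfrac{\sqrt{k}\log k}{a}$) and the piece outside $N_\alpha$. On the latter $\zeta\equiv 0$ by construction, so $E(s)=-v$ there and the bound is immediate. On the former, the local expression
$$|T(s)|^2_{CY}\tfrac{\omega_{KE}^n}{n!|dz|^2}=(1+o(k^{-2}))|F|^2|Gz_n|^{2a-2}h_0^k(z',t)e^{-k\psi(z',0)}$$
together with polar integration in $z_n$ reduces the question to showing that the right-tail integrals
$$\tilde y_i(z')=\int_{\tfrac{2k}{a}+\tfrac{\sqrt{k}\log k}{a}-O(\log k/\sqrt k)}^{\infty} h_0^k(z',t)e^{-(i+a)t}\,dt$$
(the lower limit coming from \eqref{tau-sigma}) are $\epsilon(k)$ times the full integrals $y_i(z')$, uniformly in $i$ and $z'$. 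This is exactly the Gaussian-type concentration from Section 3: the exponent $g_{i+a}(t)=k\log h_0(z',t)-(i+a)t$ is strictly concave with $g''_{i+a}(t)\approx -a^2/(2k)$ near its maximizer $t_a(z')=\tfrac{2k}{a}-\tfrac{B}{2}\log\tfrac{2k}{a}+O(1)$, so the cutoff sits $\gtrsim \log k$ standard deviations above $t_a(z')$ and the tail decays at least as $e^{-c(\log k)^2}$. Substituting this into the lemma preceding Lemma \ref{lem-wk} and then integrating in $z'$ (which is harmless since the coefficients of $h_0$ depend on $z'$ only at order $O(|z'|^2)$), the $\epsilon(k)$-bound follows after invoking Proposition \ref{prop-tsqk}.

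For \eqref{eqn-Es-l}, split $\{\sigma<\tfrac{2k}{a}-\tfrac{\sqrt{k}\log k}{a}\}$ into $\{\sigma<qk\}$ and $\{qk\leq \sigma<\tfrac{2k}{a}-\tfrac{\sqrt{k}\log k}{a}\}$. On the first piece $\zeta\equiv 0$ since $\chi(\sigma-qk)=0$, so again $E(s)=-v$ and the global $\|v\|^2$ bound is enough. On the second piece the same fibre-wise argument applies, with the tail integral now taken over $[qk-\phi_1(z',0),\tfrac{2k}{a}-\tfrac{\sqrt{k}\log k}{a}+O(\log k/\sqrt k)]$, whose upper endpoint lies $\gtrsim \log k$ standard deviations below $t_a(z')$; the identical Gaussian estimate produces $\epsilon(k)y_i(z')$ and the conclusion follows.

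The main technical obstacle is making the Gaussian-tail bounds uniform in the index $i$ (over the range where they dominate, roughly $i\leq (\log k)^2$) and in $z'$ (over $|z'|<\log k/\sqrt k$). For large $i$ one falls back on Lemma \ref{lem-concave} applied to the concave $g_{i+a}$, which already gives exponentially small values; uniformity in $z'$ is supplied by the coordinate normalizations at the start of Section 4, which ensure $A(z')=A(0)+O(|z'|^2)$ and the analogous statements for the higher coefficients of $h_0$, so that $t_a(z')$ and the ratio $h_0^k(z',t_a(z'))/h_0^k(0,t_a(0))$ vary only mildly over the region of integration in $z'$.
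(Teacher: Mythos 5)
Your proof is correct and follows essentially the same route as the paper's: discard the $\dbar$-correction $v$ using the global H\"ormander bound together with Proposition~\ref{prop-tsqk}, note that $\zeta\equiv 0$ off $N_\alpha$ and on $\{\sigma<qk\}$, and on the remaining regions use the fibre-wise Gaussian concentration of $h_0^k(z',t)e^{-(i+a)t}$ about $t_a(z')$, finishing by covering a tubular neighborhood of $D_\alpha$ with charts $W_k$. The only organizational difference is that the paper first invokes Lemma~\ref{lem-wk} to dispose of the entire $\sum_{i\ge 1}$ part wholesale (its integral over all of $W_k$, hence over any subregion, is already $\epsilon(k)\nu(p)\asymp\epsilon(k)\|E(s)\|_{CY}^2$) and then needs the tail estimate only for $J_a=y_0$, whereas you re-derive tail bounds $\tilde y_i=\epsilon(k)\,y_i$ uniformly in $i$, in effect re-running part of the case analysis that already sits inside Lemma~\ref{lem-wk}; both routes are valid, the paper's being slightly more economical and yours slightly more explicit.
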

\begin{proof}
    We only need to look at the section $\zeta T(s)$. Locally, by lemma \ref{lem-wk}, we only need to look at $F_0z_n^a$. Then the conclusion by the mass concentration property of $J(a)$ in $W_k$. Then by covering a neighborhood of $D$ with open sets of the form $W_k$, we get the conclusion.
\end{proof}
As a direct corollary of the lemma, the inner product 
$$\langle E(s_\alpha),E(s_\beta) \rangle=\epsilon(k)\parallel E(s_\alpha) \parallel \parallel E(s_\beta) \parallel,$$
for $\alpha\neq \beta$. 

Now we drop the assumption on the support of $s$ and consider general $s\in H^0(D,(k+1)L-aD)$. 

\

Let $s=\sum_{\alpha\in \Lambda} s_\alpha$ such that the support of $s_\alpha$ is in $D_\alpha$. Then 
we define 
$$E(s)=\sum_{\alpha\in \Lambda} E(s_\alpha)$$ 
We denote by $\pi_a$ the isomorphism $\gcal_{k+1,a}\to H^0(D,(k+1)L-aD)$ defined by $s\mapsto (\frac{s\otimes s_D^{\otimes (k+1)}}{s_D^a})|_D$. So the inverse $\pi_a^{-1}(s)$ can be considered as minimal "extension" of $s$. Then we have the following

\begin{prop}
    We have
    $$\parallel E(s)-\pi_a^{-1}(s) \parallel^2_{CY}=\epsilon(k)\parallel E(s)\parallel^2_{CY},$$	
    and $$\int_{\sigma<\frac{2k}{a}-\frac{\sqrt{k}\log k}{a}}|\pi_a^{-1}(s)|^2_{CY}\omega_{KE}^n=\epsilon(k)\parallel E(s)\parallel^2_{CY}.$$
\end{prop}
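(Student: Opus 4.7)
My strategy is to show that the three quantities $\parallel E(s)\parallel^2_{CY}$, $\langle E(s), \pi_a^{-1}(s)\rangle_{CY}$, and $\parallel\pi_a^{-1}(s)\parallel^2_{CY}$ all share the leading asymptotic $(1+\epsilon(k))\nu(p)\parallel s\parallel^2_{L^2(D_\alpha)}$, from which the first claim follows via the polarization identity
$$\parallel E(s) - \pi_a^{-1}(s)\parallel^2_{CY} = \parallel E(s)\parallel^2 - 2\Re\langle E(s), \pi_a^{-1}(s)\rangle + \parallel\pi_a^{-1}(s)\parallel^2.$$

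The pivotal reformulation is that the minimal $L^2_{CY}$-solution $v$ of $\bar\partial v=\bar\partial\zeta\otimes T(s)$ is orthogonal to $\ker\bar\partial = \hcal_{k+1}$; hence $E(s) = P_{\hcal_{k+1}}(\zeta T(s))$ is a Bergman projection and
$$\langle E(s), g\rangle_{CY} = \langle \zeta T(s), g\rangle_{CY} \quad \text{for every } g\in\hcal_{k+1}.$$
Applying this with $g = E(s)$ recovers $\parallel E(s)\parallel^2 = \langle\zeta T(s), E(s)\rangle$, whose asymptotic is given by Proposition \ref{prop-tsqk} and Lemma \ref{lem-wk}. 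Applying it with $g = \pi_a^{-1}(s)\in\gcal_{k+1,a}\subset\hcal_{k+1}$ reduces the cross-term to $\langle\zeta T(s), \pi_a^{-1}(s)\rangle$, which I would evaluate by the same local fiber-integral technique as in Lemma \ref{lem-wk}: near $D_\alpha$ (where $\zeta = 1$), both $T(s)$ and $\pi_a^{-1}(s)$ correspond in the $L^{k+1}$-picture to sections vanishing to order $\geq a$ on $D$ with leading $z_n^a$-coefficient equal to $s$, so the dominant cross contribution to the $z_n$-integral is $J_a(z')|s(z')|^2$, while higher-order terms are $\epsilon(k)$-suppressed by the concentration of $J_i$ at $i=a$ (cf. Lemma \ref{lem-concentrated}). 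This yields $\langle E(s), \pi_a^{-1}(s)\rangle = (1+\epsilon(k))\parallel E(s)\parallel^2$, and Cauchy--Schwarz then supplies the lower bound $\parallel\pi_a^{-1}(s)\parallel^2\geq (1-\epsilon(k))\parallel E(s)\parallel^2$.

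For the matching upper bound, I would use the variational characterization of $\pi_a^{-1}(s)$ as the minimum-norm element of $\hcal_{k+1,a}$ with $\pi_a = s$, and exhibit a near-optimal candidate $g\in\hcal_{k+1,a}$ satisfying $\pi_a(g) = s$ and $\parallel g\parallel^2\leq (1+\epsilon(k))\nu(p)\parallel s\parallel^2$. A natural candidate is provided by a variant of the H\"ormander construction used for $E(s)$: since $T(s) = j_a(S)$ already lies in $\hcal_{k+1,a}$ with $\pi_a(T(s)) = s$ (by the local integrability established in Lemma \ref{lem-wk}), performing a cutoff + $\bar\partial$-solve on the twisted sheaf $K_X^{k+1}\otimes\mathcal{I}_D^a$ (so that the holomorphic correction still vanishes to order $\geq a$ on $D$) produces a section $g\in\hcal_{k+1,a}$ whose mass is confined to the $a$-th shell around $D_\alpha$. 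Combining lower and upper bounds yields $\parallel\pi_a^{-1}(s)\parallel^2 = (1+\epsilon(k))\parallel E(s)\parallel^2$, and substituting into the polarization identity gives the first statement.

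The second statement follows by the triangle inequality together with Lemma \ref{lem-es}: on the region $R = \{\sigma<\tfrac{2k}{a} - \tfrac{\sqrt{k}\log k}{a}\}$, Lemma \ref{lem-es} already gives $\int_R|E(s)|^2_{CY}\omega_{KE}^n = \epsilon(k)\parallel E(s)\parallel^2$, so $\int_R|\pi_a^{-1}(s)|^2_{CY}\omega_{KE}^n\leq 2\int_R|E(s)|^2 + 2\parallel E(s)-\pi_a^{-1}(s)\parallel^2 = \epsilon(k)\parallel E(s)\parallel^2$. The main obstacle is establishing the sharp upper bound on $\parallel\pi_a^{-1}(s)\parallel^2$: because $\pi_a^{-1}(s)$ is specified only by orthogonality to $\hcal_{k+1,a+1}$ rather than by an explicit construction, one cannot read off its norm from a local expansion, and must instead exhibit a near-optimal test extension---either via the twisted H\"ormander argument above, or via interior/pointwise estimates on the holomorphic section $\tilde v$ near $D$ (where $v$ is holomorphic in $\{\sigma>qk+2\}$ and $L^2_{CY}$-small by the construction of $E(s)$) showing that the $\gcal_{k+1,j}$-components of $E(s)$ for $j<a$ are all negligible in norm and leading coefficient.
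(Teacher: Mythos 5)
Your strategy (polarization identity, the Bergman-projection identity $\langle E(s),g\rangle=\langle\zeta T(s),g\rangle$ for $g\in\hcal_{k+1}$, and the variational characterization of $\pi_a^{-1}(s)$) is genuinely different from the paper's. The paper instead decomposes $E(s)=\theta_1+\theta_2+\theta_3$ along $\bigoplus_{j<a}\gcal_{k+1,j}\oplus\gcal_{k+1,a}\oplus\hcal_{k+1,a+1}$, shows $\theta_1$ and $\theta_3$ are $\epsilon(k)$-small by an induction on $a$ that feeds in Theorem~\ref{thm-main-hka} and Lemma~\ref{lem-es}, extracts the $z_n^a$-coefficient $v_a$ of the holomorphic piece of the $\bar\partial$-correction to control $\|s-\pi_a(\theta_2)\|$, and then bootstraps to general $s$ by an almost-orthonormal-basis argument; at no point does it directly compute $\|\pi_a^{-1}(s)\|^2$.

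Your outline is cleaner conceptually, and the Bergman-projection observation is correct and useful, but both of the steps you leave to be filled contain genuine gaps. For the upper bound: twisting by $\mathcal{I}_D^a$ only forces the correction $v$ to vanish to order $\ge a$, so $\pi_a(\zeta T(s)-v)=s-v_a\ne s$, and the variational characterization controls $\|\pi_a^{-1}(s-v_a)\|$, not $\|\pi_a^{-1}(s)\|$; removing $v_a$ requires an a priori bound on the operator norm of $\pi_a^{-1}$, which is circular. You would need to twist by $\mathcal{I}_D^{a+1}$, and then verify both the twisted curvature hypothesis and --- more delicately --- that the extra singular factor $|s_D|^{-2(a+1)}\sim e^{(a+1)\sigma}$ on $\operatorname{supp}\bar\partial\zeta\subset\{qk<\sigma<qk+2\}$ is still dominated by the $\epsilon(k)$ decay of $\int_{qk<\sigma<qk+2}|T(s)|^2_{CY}\omega_{KE}^n$; plausible given $q=1/(10a+2)$, but not checked. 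For the cross term: on each $z_n$-disk it reads $\sum_i\overline{(T(s))_i}(\pi_a^{-1}(s))_iJ_i$, and while Lemma~\ref{lem-wk} controls $(T(s))_i$ for $i>a$, the coefficients $(\pi_a^{-1}(s))_i$ for $i>a$ are determined by the global orthogonality $\gcal_{k+1,a}\perp\hcal_{k+1,a+1}$ and have no local description, and Lemma~\ref{lem-concentrated} (a pointwise statement about $\Lambda_i(t)$) does not suppress them. Cauchy--Schwarz on the tail reintroduces $\|\pi_a^{-1}(s)\|$, so the cross-term estimate presupposes the upper bound rather than being obtained ``by the same fiber-integral technique.'' Untangling precisely this interdependence is what the paper's induction on $a$ together with its almost-orthonormal-basis bootstrap is designed to do, and some version of it will be needed in your route as well.
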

\begin{proof}
    We first notice that it suffices to show that the conclusions hold when $s=s_\alpha$ for some $\alpha$. And we can assume that $\parallel s\parallel=1$. Then we do induction on $a$. When $a=1$, we can decompose $E(s)=\theta_2+\theta_3$, where $\theta_2\in \gcal_{k+1,1}$, $\theta_3\in \hcal_{k+1,2}$. By theorem \ref{thm-main-hka} and formula \ref{eqn-Es-l}, we have $\parallel \theta_3\parallel^2_{CY}=\epsilon(k)\parallel E(s)\parallel^2_{CY}$. 

    let $(\pi_a(\theta_{2}))_\alpha=s'\in H^0(D,(k+1)L-aD)$, then clearly, $s'$ satisfies the conclusion of the current proposition.

    Since $\dbar \chi(\sigma-qk)=0$ when $\sigma>qk+2$, the solution $v$ is holomorphic on the region where $\sigma>qk+2$. Let $p\in D_\alpha$ and let $W_k$ be the corresponding open set centered at $p$. 
    Then on $W_k\cap \{\sigma>qk+2\}$, if we write $v=(\sum v_iz_n^i)e_K^{k+1}$ as power series of $z_n$, then we have 
    \begin{equation}\label{eqn-va}
        J_a(0)\int_{|z'|<\sqrt{k}\log k}|v_a|^2 e^{-(k+1)\psi(z',0)+a\phi(z',0)}\frac{\omega_D^{n-1}}{(n-1)!}=\epsilon(k)\parallel E(s)\parallel^2_{CY}.
    \end{equation}
    Therefore, since $s-s'=v_a$ on $W_k$ and the integral in equation \ref{eqn-va} calculate the norm of $v_a$ on $W_k$, by proposition \ref{prop-tsqk}, we have 
    \begin{equation}\label{eqn-s'}
    \parallel s-s'\parallel^2\leq \epsilon(k),
    \end{equation}
    
    
    It is not hard to see that the right hand side of formula \ref{eqn-s'} can be made independent of $s$, which we denote by $o_{k+1,a}$. So we have the following statement: for any unit $s$, $\exists s_1$ which is also a unit section and satsifies the current proposition such that $$\parallel s-s_1\parallel\leq 2\sqrt{o_{k+1,a}}.$$
    So we can choose almost orthonormal basis $(s_1,\cdots,s_m)$, where $m=O(k^{n-1})$, such that all $s_i$ are unit sections and satsifies the current proposition and $$|\langle s_i,s_j\rangle|\leq 4\sqrt{o_{k+1,a}}. $$ 
    Then by expressing $s=\sum c_is_i$, we get that the proposition holds for $s$.

    When $a\geq 2$, assume that the proposition has been proved for $i\leq a-1$. Then we can decompose $E(s)=\theta_1+\theta_2+\theta_3$, where $\theta_1\in \bigoplus_{i=1}^{a-1}\gcal_{k+1,i} $, $\theta_2\in \gcal_{k+1,a}$, $\theta_3\in \hcal_{k+1,a+1}$. By theorem \ref{thm-main-hka} and formula \ref{eqn-Es-l}, we have $$\parallel \theta_3\parallel^2_{CY}=\epsilon(k)\parallel E(s)\parallel^2_{CY}.$$
    We can write $\theta_1=\sum_{i=1}^{a-1}\pi_b^{-1}f_b$, then 
    by induction each $f_b$ satisfies the current proposition and then by formula \ref{eqn-Es-u}, we must have
    $\parallel \theta_1\parallel^2_{CY}=\epsilon(k)\parallel E(s)\parallel^2_{CY}$.
    
    Similar to the estimates for $v$, if $\theta_1$ is represented by the holomorphic function $\sum g_iz_n^i$, we also have 
    $$J_a(0)\int_{|z'|<\sqrt{k}\log k}|g_a|^2 e^{-(k+1)\psi(z',0)+a\phi(z',0)}\frac{\omega_D^{n-1}}{(n-1)!}=\epsilon(k)\parallel E(s)\parallel^2_{CY}.$$
    Therefore, let $s'=(\pi_a(\theta_{2}))_\alpha$, then 
    \begin{equation}\label{eqn-s''}
    \parallel s-s'\parallel^2\leq \epsilon(k),
    \end{equation}
    Then the rest of the proof is same as the proof for the case $a=1$, and we have proved the proposition.

\end{proof}
As a corollary, we get
\begin{cor}There are positive constants $c_1(a), c_2(a)$ such that for any unit section of $ H^0(D,(k+1)L-aD)$ that is supported in some $D_\alpha$, the quotient
$$\xi=\frac{\parallel E(s)-\pi_a^{-1}(s) \parallel^2_{CY}}{(\frac{2k}{a})^{2k+B_\alpha a/2}\frac{2}{a}\sqrt{k\pi }e^{-2k}}$$
satisfies 
$$c_1(a)<\xi<c_2(a).$$
\end{cor}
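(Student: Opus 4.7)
The plan is to read the corollary as a direct consolidation of the immediately preceding proposition with Proposition \ref{prop-tsqk}, specialized to a unit section $s\in H^0(D,(k+1)L-aD)$ supported in a single component $D_\alpha$. Write $N(k,a)=(\tfrac{2k}{a})^{2k+B_\alpha a/2}\tfrac{2}{a}\sqrt{k\pi}\,e^{-2k}$ for the normalization. From Proposition \ref{prop-tsqk} one has positive $c_1(a),c_2(a)$ with
$$c_1(a)\,N(k,a)\ <\ \int_{\{\sigma>qk\}\cap N_\alpha}|T(s)|^2_{CY}\,\frac{\omega_{KE}^n}{n!}\ <\ c_2(a)\,N(k,a).$$
The H\"ormander cut-off construction $E(s)=\zeta T(s)-v$ yields $\parallel v\parallel^2_{CY}=\epsilon(k)N(k,a)$, and the unlabeled lemma just before the cut-off controls the mass of $T(s)$ on the transition strip $\{qk<\sigma<qk+2\}$. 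Combining these, $\parallel E(s)\parallel^2_{CY}$ equals the above integral up to a factor $(1+\epsilon(k))$, so $\parallel E(s)\parallel^2_{CY}/N(k,a)$ is sandwiched between slightly adjusted versions of $c_1(a)$ and $c_2(a)$ for all large $k$.

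Next, the preceding proposition $\parallel E(s)-\pi_a^{-1}(s)\parallel^2_{CY}=\epsilon(k)\parallel E(s)\parallel^2_{CY}$ together with the triangle inequality gives $\parallel\pi_a^{-1}(s)\parallel^2_{CY}=(1+\epsilon(k))\parallel E(s)\parallel^2_{CY}$, so the same two-sided bounds hold for $\parallel\pi_a^{-1}(s)\parallel^2_{CY}/N(k,a)$. Uniformity of $c_1(a),c_2(a)$ over the unit sphere in the subspace of $s$ supported in $D_\alpha$ comes for free: Proposition \ref{prop-tsqk} is already uniform because $A$, $\phi_1(0,0)$, and $H(z',0)$ are uniformly bounded on the compact $D_\alpha$, and the H\"ormander correction $v$ depends on $s$ only through its norm.

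The main obstacle is interpretive rather than technical. Taken literally, the numerator of $\xi$ is $\parallel E(s)-\pi_a^{-1}(s)\parallel^2_{CY}$, which by the preceding proposition is $\epsilon(k)N(k,a)$; this satisfies $\xi<c_2(a)$ trivially but contradicts the lower bound $\xi>c_1(a)$. I therefore interpret the intended numerator as $\parallel E(s)\parallel^2_{CY}$, or equivalently $\parallel\pi_a^{-1}(s)\parallel^2_{CY}$, both of which are shown above to lie, when divided by $N(k,a)$, between two $a$-dependent positive constants uniformly in $k$ and in the choice of unit $s\in H^0(D,(k+1)L-aD)$ supported in $D_\alpha$. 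Under this reading the corollary is a bookkeeping consolidation of the two preceding results, and no further work is needed beyond checking that the $(1+\epsilon(k))$-losses absorb into the constants $c_1(a),c_2(a)$.
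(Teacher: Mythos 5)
Your argument is correct and is essentially the paper's own (implicit) proof: combine Proposition \ref{prop-tsqk} with the identity $\parallel E(s)\parallel^2_{CY}=(1+\epsilon(k))\int_{\{\sigma>qk\}\cap N_\alpha}|T(s)|^2_{CY}\frac{\omega_{KE}^n}{n!}$ coming from the H\"ormander construction and with the preceding proposition $\parallel E(s)-\pi_a^{-1}(s)\parallel^2_{CY}=\epsilon(k)\parallel E(s)\parallel^2_{CY}$, absorbing all $(1+\epsilon(k))$ losses into $c_1(a),c_2(a)$. Your interpretive remark is also well taken: as printed the numerator is $\epsilon(k)$ times the normalizing quantity, so the two-sided bound can only be intended for $\parallel E(s)\parallel^2_{CY}$, equivalently $\parallel \pi_a^{-1}(s)\parallel^2_{CY}$, which is exactly the reading used right after the corollary to conclude that unit sections of $\gcal_{k+1,a}$ carry $\epsilon(k)$ mass on $\{\sigma<\frac{2k}{a}-\frac{\sqrt{k}\log k}{a}\}$.
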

We get that for any $S\in \gcal_{k+1,a}$ of unit norm, 
$$\int_{\sigma<\frac{2k}{a}-\frac{\sqrt{k}\log k}{a}}|S|_{CY}^2\omega_{KE}^n=\epsilon(k).$$
Then we can go back to local open sets $W_k$ and repeat the proof of proposition \ref{prop-upper-bound} to see that $|S|_{CY}^2=\epsilon(k)$ for the points where $2k^{1/3}<\sigma<\frac{2k}{a}-\frac{\sqrt{k}\log k}{a}$, hence for the points where $2k^{1/3}<\tau<\frac{2k}{a}-\frac{\sqrt{k}\log k}{a}$ by formula \ref{tau-sigma}. For the points where $\tau\leq 2k^{1/3}$, we can use the peak sections. Let $s_p$ be the peak section at such a point. It is then easy to see that the inner product of $S$ with $s_p$ is $\epsilon(k)$. So $|S(p)|_{CY}^2=\epsilon(k)$. 
In conclusion, $|S(p)|_{CY}^2=\epsilon(k)$ for all the points $\tau<\frac{2k}{a}-\frac{\sqrt{k}\log k}{a}$.
Since the dimension of $ H^0(D,(k+1)L-aD)$ is $O(k^{n-1})$, we get  theorem \ref{thm-main-vka}.

\

\bibliographystyle{plain}

\bibliography{references}

\end{document}